\theoremstyle{plain}
\newtheorem{theorem}{Theorem}[section]
\newtheorem{corollary}[theorem]{Corollary}
\newtheorem{lemma}[theorem]{Lemma}
\newtheorem{conjecture}[theorem]{Conjecture}
\newtheorem{notation}[theorem]{Notation}
\theoremstyle{definition}
\newtheorem{definition}[theorem]{Definition}
\newtheorem{remark}[theorem]{Remark}
\newtheorem{example}[theorem]{Example}
\newtheorem{proposition}[theorem]{Proposition}
\newcommand{\Lbb}{\mathbb{L}}
\newcommand{\Z}{\mathbb{Z}}
\newcommand{\R}{\mathbb{R}}
\newcommand{\C}{\mathbb{C}}
\newcommand{\PP}{\mathbb{P}}
\newcommand{\F}{{\mathbb F}}
\newcommand{\CC}{{\mathcal C}}
\newcommand{\last}{{\mathsf{last}}}
\newcommand{\mini}{{\mathsf{min}}}
\newcommand{\diag}{{\mathsf{diag}}}
\newcommand{\A}{{\mathcal A}}
\newcommand{\St}{{\mathcal S}}
\newcommand{\pol}{\lhd}
\definecolor{deepblue}{cmyk}{0,0.83,1,0.70}
\definecolor{gray}{cmyk}{0,0,0,0.3}
\definecolor{rred}{cmyk}{0,1,1,0}
\definecolor{chairo}{cmyk}{0,0.83,1,0.70}
\definecolor{roypur}{cmyk}{0.75,0.90,0,0.1}
\definecolor{darkorc}{cmyk}{0.40,0.80,0.20,0}
\definecolor{oliv}{cmyk}{0.64,0.00,0.75,0.56}
\definecolor{azuro}{cmyk}{1,1,0,0.46}
\DeclareMathOperator{\Cone}{Cone}
  \definecolor{colore}{cmyk}{0,1,0.6,0}
  \definecolor{coloregen}{cmyk}{0.7,0,1,0}
  \definecolor{coloresimo}{cmyk}{1,0.6,0,0}
  \definecolor{colore}{cmyk}{0,0,0,1}
  \definecolor{coloregen}{cmyk}{0,0,0,1}
  \definecolor{coloresimo}{cmyk}{0,0,0,1}
\title{Homology graph of real arrangements and monodromy of Milnor Fiber}
\author{Pauline Bailet}
\address{Department of Mathematics \\
Bremen University,  Bremen, Germany} 
\email{pauline.bailet@uni-bremen.de} 
\author{Simona Settepanella}
\address{Department of Mathematics \\
Hokkaido University, Sapporo, Japan} 
\email{s.settepanella@math.sci.hokudai.ac.jp }
\thanks{Authors gratefully acknowledge F. Callegaro, G. Gaiffi and M. Salvetti organizers of the workshop \textit{Arrangements: topology, combinatorics and stability} for hosting them at Pisa Mathematics Department. They also thank M. Falk, A. Libgober, M. Yoshinaga and an anonymus referee for their useful comments. 
During the preparation of this paper, first author was supported by two Postdoctoral Fellowships for Foreign Researchers: JSPS and then University of Bremen and the European Union FP7 COFUND under grant agreement n\textsuperscript{o} 600411). She thanks both of them for their financial and other supports. Second author is supported by JSPS Grant in aid n. 16K05051, title: \textit{On generalized Pure Braid Group.} }
\date{\today}
\subjclass{52C35 52B35 20F36 14-XX 05B35}
\keywords{Hyperplane arrangements, Minimal CW-complex, Monodromy.}
\begin{document}
\maketitle

\begin{abstract}

We study the first homology group $H_1(F,\C)$ of the Milnor fiber $F$ of sharp arrangements $\overline{\A}$ in $\PP^2_\R.$ 
Our work relies on the minimal complex $\CC_*(\mathcal{S}(\A))$ of the deconing arrangement $\A$ and its boundary map. We describe an algorithm which computes possible eigenvalues of the monodromy operator $h_1$ of $H_1(F,\C)$. We prove that, if a condition on some intersection points of lines in $\A$ 
is satisfied, then the only possible non trivial eigenvalues of $h_1$ are cubic roots of the unity. Moreover we give sufficient conditions for just eigenvalues of order $3$ or $4$ to appear in cases in which this condition is not satisfied.
\end{abstract}

\noindent

 \section{Introduction}

 Let $\overline{\A}$ in $\PP_{\R}^2$ be a projective line arrangement, $\A$ in $\R^2$ be its deconing and $M(\A)=\C^2 \setminus \cup_{H \in \A}H_\C$ be the complement of the complexified arrangement. The Milnor fiber $$F=Q^{-1}(1)\subset \C^3$$ of $\A$ is the smooth affine hypersurface defined as preimage of $1$ by the defining polynomial $Q$ of $\overline{\A} $. Consider the geometric monodromy action on $F,$ given by the multiplication by $\lambda= \exp(2i\pi / n+1),$ where $n+1$ is the cardinality of $\overline{\A}$.  This automorphism induces the monodromy operators in homology $$h_q: H_q(F,\C) \to H_q(F,\C).$$   It is known that we have the following equivariant decomposition 
\begin{equation}\label{eq:decomposition}
H_q(F,\C)=\bigoplus_{d \mid  n+1}[\C[t,t^{-1}]/\varphi_d]^{\beta_{q,d}}
\end{equation}
where each $\beta_{q,d}$ is the multiplicity of an eigenvalue of $h_q$ with order $d,$ and $\varphi_d$ is the cyclotomic polynomial of degree $d.$ 
The computation of the eigenspaces of the monodromy operators, i.e. the cyclic modules $[\C[t,t^{-1}]/\varphi_d]^{\beta_{q,d}}$ appearing in (\ref{eq:decomposition}), is a difficult question which has been intensively studied the last decades and approached by different techniques such as nonresonant conditions for local systems (see for intance \cite{cdo,co-loc}), multinets (\cite{FalYuz,Torielli-Yoshi}), minimality of the complement(\cite{SS,y-lef,y-mini,y-cham}), graphs (\cite{Bailet},\cite{Sal-Ser}), and also mixed Hodge structure (\cite{BDS,BS,BS1,DNA,DL,DP,D3}). Many progress have been done for braid arrangements (\cite{S1}), graphic arrangements (\cite{PM}) and real line arrangements (\cite{Yoshi2,Yoshi3}). Notice that we can reduce to study $h_1,$ since the eigenspaces of $h_1$ determine the eigenspaces of $h_2$ in view of the formula of the Zeta function of the monodromy, see for instance \cite[Proposition 4.1.21]{ST}. Although we know that the monodromy operators and their eigenspaces are closely related with the multiplicities of the intersection points of the arrangement (see among others \cite{Bailet2,Dimcamonodromy,lib-mil,PS2,Yoshi2}), the role of these multiplicities in the determination of the eigenspaces is obscure and many questions are still open, even for $q=1.$
We mention here some of them that motivated the work presented in this paper:
\begin{enumerate}
\item it is yet unknown whether the (co)homology groups of the Milnor fiber are combinatorial; 
\item it is unknown whether the (co)homology groups of the Milnor fiber with $\Z$ coefficients are torsion free (but partial results as in \cite{Wil});
\item it is still yet not known wether a cyclic module $[\C[t,t^{-1}]/\varphi_d]$ appears or not in the decomposition of $H_q(F,\C)$ for a given $d$. In particular it is not known, in general, wether a non trivial monodromy $d$ appears or not;
\item even more specifically, it is not yet known whether the monodromy operator $h_1$ can have eigenvalues which are not roots of unity of order $3$ or $4$ (see \cite{PS2}).
\end{enumerate}
In case no non trivial eigenvalues appear, the arrangement $\A$ is said to be \textit{a-monodromic}. Several conjectures have been made on the (co)homology groups of the Milnor fiber. Among others the four following ones this paper focuses on:

\begin{conjecture}\cite{Sal-Ser}\label{conj:amon} Let $\Gamma(\A)$ be the graph defined by vertices $H \in \A$ and edges $(H,H')$ if and only if  $H \cap H'$ is a point of multiplicity two. If $\Gamma(\A)$ is connected, then $\A$ is a-monodromic.
\end{conjecture}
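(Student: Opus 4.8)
By the classical description of the Milnor fibre as a cyclic cover, $F$ is the $(n+1)$-fold cover of $M(\A)$ determined by the total linking number $H_1(M(\A),\Z)\to\Z/(n+1)$, and in \eqref{eq:decomposition} the multiplicity $\beta_{1,d}$ equals $\dim_{\C}H_1(M(\A),\LL_\zeta)$, where $\LL_\zeta$ is the rank one local system sending every meridian of $\A$ to a fixed root of unity $\zeta$ of order $d$ with $\zeta^{n+1}=1$. Hence $\A$ is a-monodromic if and only if $H_1(M(\A),\LL_\zeta)=0$ for every $\zeta\neq1$. Computing $H_1(M(\A),\LL_\zeta)$ from the twist of the minimal complex $\CC_*(\mathcal{S}(\A))$ gives a complex
\begin{equation*}
0\longrightarrow\C^{c_2}\xrightarrow{\ \partial_2^{\zeta}\ }\C^{c_1}\xrightarrow{\ \partial_1^{\zeta}\ }\C\longrightarrow 0 ,
\end{equation*}
with $c_1=|\A|=n$, the $1$-cells $e_H$ indexed by the lines $H\in\A$, $c_2=\sum_X(m_X-1)$ where $X$ ranges over the intersection points of $\A$, and $\partial_1^{\zeta}e_H=(\zeta-1)e_0$ (up to sign) for every $H$. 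Since $\zeta\neq1$ and $M(\A)$ is connected, $\partial_1^{\zeta}$ is onto and $\ker\partial_1^{\zeta}$ has dimension $n-1$, so it suffices to show that $\partial_2^{\zeta}$ has rank $n-1$ for every $\zeta\neq1$ with $\zeta^{n+1}=1$.

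The heart of the plan is a local analysis of $\partial_2^{\zeta}$ at the double points. Among the $c_2$ columns of $\partial_2^{\zeta}$ exactly one corresponds to each double point of $\A$, i.e. to each edge of $\Gamma(\A)$. From the explicit boundary map of $\CC_*(\mathcal{S}(\A))$ underlying the algorithm of this paper (equivalently, by Fox calculus applied to an Arvola presentation of $\pi_1M(\A)$, in which the relation at a double point $p=H\cap H'$ is $[\mu_H,v_p\mu_{H'}v_p^{-1}]=1$ for a braiding word $v_p$), the column attached to $p$ has the form
\begin{equation*}
(\zeta-1)\bigl(e_H-\zeta^{e(v_p)}e_{H'}-(\zeta-1)c_p\bigr) ,
\end{equation*}
where $e(v_p)\in\Z$ is the exponent sum of $v_p$ and $c_p\in\C^{c_1}$ is the vector of abelianised Fox derivatives of $v_p$; in particular, when the braiding is trivial ($v_p$ empty) one has $c_p=0$ and $e(v_p)=0$, so the column is simply $(\zeta-1)(e_H-e_{H'})$. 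Now choose a spanning tree $T$ of the connected graph $\Gamma(\A)$; it has $n$ vertices and $n-1$ edges. Dividing by $\zeta-1$, the $(n-1)\times n$ matrix of the columns along $T$ is a weighted signed incidence matrix of $T$, namely $\bigl(e_H-\zeta^{e(v_p)}e_{H'}\bigr)_{p\in T}$, perturbed by $-(\zeta-1)(c_p)_{p\in T}$; after rooting $T$, ordering the lines by distance from the root and deleting the column of the root, the unperturbed matrix is triangular with nonzero diagonal entries, hence invertible for every $\zeta$. Consequently, if $T$ could be chosen so that the braiding is trivial along all of its edges, the perturbation would vanish and we would obtain $\operatorname{rank}\partial_2^{\zeta}=n-1$ for all $\zeta$, proving the conjecture.

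The obstacle is therefore to control the braiding: one must show that the unperturbed weighted incidence matrix of $T$, modified by $(\zeta-1)$ times the braiding matrix $(c_p)_{p\in T}$---possibly after adjoining the columns of $\partial_2^{\zeta}$ attached to the multiple points of $\A$---still has rank $n-1$ at every root of unity $\zeta\neq1$ dividing $n+1$. Its maximal minors are Laurent polynomials that do not vanish at $\zeta=1$ (there one recovers the plain incidence matrix of $T$), and the conjecture says that, for a suitable choice of $T$ and of the generic flag defining $\CC_*(\mathcal{S}(\A))$, at least one such minor is nonzero at each relevant root of unity; invariantly, that connectedness of $\Gamma(\A)$ prevents a translated (hence a priori non-combinatorial) component of the first characteristic variety of $M(\A)$ from meeting a Milnor fibre character. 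I expect this to be the main difficulty: the vectors $c_p$ record the global braiding of $\A$, and nothing obvious forbids them from creating a common zero of all relevant minors at some such $\zeta$. Note also that the usual deletion--restriction long exact sequence is unavailable here, since $\LL_\zeta$ has monodromy $\zeta\neq1$ around every line and therefore does not extend across a line one would want to delete. Controlling the braiding words $v_p$ under extra hypotheses on the intersection points of $\A$ is exactly what produces the partial statements of the abstract; as a first reduction one may observe that connectedness of $\Gamma(\A)$ already rules out a genuine multinet on $\A$---the double points of an arrangement carrying a multinet all lie within its multinet classes, so its double point graph is disconnected---which removes the positive-dimensional components of the characteristic variety and leaves only the translated points to be excluded.
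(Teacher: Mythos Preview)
The statement you are addressing is a \emph{conjecture} in the paper (Conjecture~\ref{conj:amon}), not a theorem; the paper does not prove it and explicitly lists it among the open questions motivating the work. So there is no ``paper's own proof'' to compare against. What the paper does establish are partial cases: for sharp arrangements satisfying the hypotheses of Theorem~\ref{thm1.1} or Theorem~\ref{thm2.1}, the only possible nontrivial monodromies have order $3$ (resp.\ $3$ or $4$), and then one invokes the result of \cite{Bailet} that Conjecture~\ref{conj:amon} holds whenever the only monodromies are of prime-power order. Your proposal is honest about being a \emph{plan} rather than a proof, and you correctly isolate the obstruction.

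Your approach---reducing to rank of $\partial_2^\zeta$, extracting the columns at double points, and using a spanning tree of $\Gamma(\A)$ to get a weighted incidence matrix---is a natural and well-known line of attack, genuinely different from what the paper does. The paper never works with $\Gamma(\A)$ directly; instead it builds an auxiliary \emph{homology graph} $\mathcal{G}(\A)$ encoding row operations on the explicit boundary matrix of the minimal complex, and shows that for sharp arrangements certain subgraphs ($\mathcal{G}_{\A_{(0,3)},\last}$, $\mathcal{G}_{\A'_{(0,3,4)},\last,\mini}$) are forests, forcing diagonalisation to $\mathrm{diag}(1-t)$. What your approach buys is a clean conceptual picture (spanning tree $+$ perturbation); what the paper's buys is an explicit combinatorial criterion on the geometry of a sharp arrangement that can actually be checked.

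The genuine gap you identify is real and is exactly why the conjecture is open: the braiding vectors $c_p$ are global, not local to the double points, and there is no known mechanism forcing the perturbed minors to be nonzero at every relevant root of unity. Your remark that connectedness of $\Gamma(\A)$ excludes multinets is correct and useful (it kills the positive-dimensional components of $\mathcal{V}_1$), but translated isolated torsion points on the characteristic variety are not ruled out by any argument you give, and indeed not by any argument currently known in general. So your plan, as written, stops precisely where the difficulty begins.
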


\begin{conjecture}\label{conj:mon}\cite{Yoshi2} If $\overline{\A}$ has a sharp pair of lines and $\A$ is not a-monodromic, then the eigenvalues of the monodromy operator $h_1$ are cubic roots of unity.
\end{conjecture}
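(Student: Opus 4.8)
\medskip
\noindent\emph{A proof strategy.}
The plan is to read the eigenvalues of $h_1$ off the minimal complex $\CC_*(\mathcal{S}(\A))$ of the deconed arrangement and then use the sharp pair of lines to rule out every eigenvalue of order $d\notin\{1,3\}$. For the first step, recall that for a primitive $d$-th root of unity $\zeta_d=\exp(2\pi i/d)$ with $d\mid n+1$, $d>1$, the $\zeta_d$-eigenspace of $h_1$ is isomorphic to the twisted homology $H_1(M(\A),\mathcal{L}_{\zeta_d})$, where $\mathcal{L}_{\zeta_d}$ is the rank one local system on $M(\A)$ with monodromy $\zeta_d$ around every line; thus $\beta_{1,d}=\dim_\C H_1(M(\A),\mathcal{L}_{\zeta_d})$. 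Feeding $\mathcal{L}_{\zeta_d}$ into $\CC_*(\mathcal{S}(\A))$ makes the boundary maps $\partial_1,\partial_2$ into explicit matrices over $\C[t,t^{-1}]$, whose entries are dictated by the real chamber structure of $\A$, and
\[\beta_{1,d}=\dim_\C\CC_1-\operatorname{rank}\partial_1(\zeta_d)-\operatorname{rank}\partial_2(\zeta_d).\]
Evaluating these ranks for each $d\mid n+1$ is already a decision procedure for a fixed arrangement; the content of the conjecture is that the answer is forced by the existence of a sharp pair, and it amounts to showing $\beta_{1,d}=0$ for every such $d$ with $d\ge2$ and $d\ne3$.

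The role of the sharp pair is to provide a local relation of the right kind. Let $\{H_a,H_b\}$ be the sharp pair and $C$ the chamber it determines: near the double point $p=H_a\cap H_b$ the chamber $C$ is an angular region bounded only by $H_a$ and $H_b$, and the way it extends toward the line at infinity of $\overline{\A}$ means that, combinatorially, $H_a$, $H_b$ and the deconing line bound an empty triangular face. In $\CC_*(\mathcal{S}(\A))$ this configuration contributes $2$-cells whose boundary, after specialization at $t=\zeta_d$, becomes a relation which a direct inspection should reveal to be a \emph{nonzero} multiple of a single meridian class \emph{unless} $1+\zeta_d+\zeta_d^2=0$, i.e.\ unless $d=3$; hence for $d\ge2$, $d\ne3$ that meridian class dies in the specialized $H_1$. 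This is where the ``$3$'' of the conjecture enters: just as a triple point contributes the factor $1+t+t^2$ of the cyclotomic polynomial $\varphi_3$, the empty triangle attached to the sharp pair produces the same factor.

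To conclude one must propagate this vanishing over all of $H_1(M(\A),\mathcal{L}_{\zeta_d})$. Every intersection point $q$ of $\A$ yields, through the $2$-cells of $\CC_*(\mathcal{S}(\A))$ meeting it, a relation among the meridians of the lines through $q$; reading these as the edges of a graph on the lines of $\A$ enriching the homology graph $\Gamma(\A)$, one shows that, starting from the meridian killed above and using the relations attached to the points that are non-resonant for $\mathcal{L}_{\zeta_d}$ (multiplicity not divisible by $d$), all meridians are forced to vanish, so $\beta_{1,d}=0$. For $d=3$ this propagation genuinely fails on arrangements carrying a $3$-net, consistently with the conclusion; for $d=2$ and $d\ge4$ it should go through as soon as the relevant graph is connected — and guaranteeing that connectivity from the mere existence of a sharp pair seems to need an extra hypothesis on the intersection points, presumably the one under which the present paper proves the result.

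The main obstacle is precisely this last point: removing the auxiliary hypothesis in the propagation step. One would have to show, using only the existence of a sharp pair, that the relations coming from the $\mathcal{L}_{\zeta_d}$-non-resonant points connect the homology graph for all $d\ge4$ (and $d=2$) — equivalently, that a real arrangement with a sharp pair cannot carry a net or pencil structure compatible with a monodromy eigenvalue of order $\ge4$. The facts that real line arrangements admit no $k$-nets for $k\ge5$ and that the only $4$-net, the Hesse configuration, is not realizable over $\R$, make the conclusion very plausible; turning this into a proof that bypasses the combinatorial side condition is exactly the gap between the conditional theorem of this paper and Conjecture~\ref{conj:mon}.
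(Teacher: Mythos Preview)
This statement is a \emph{conjecture} (attributed to Yoshinaga) that the paper does \emph{not} prove; the paper establishes only the conditional results in Theorems~\ref{thm1.1} and~\ref{thm2.1}. So there is no ``paper's own proof'' to compare against, only the paper's method for the partial results. Your proposal is explicitly a strategy rather than a proof, and you correctly acknowledge the gap in the final paragraph.

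That said, the mechanism you propose for why ``$3$'' should appear is not the paper's, and it does not seem to work as stated. You posit that the sharp pair $\{H_a,H_b\}$ together with ``the deconing line'' bounds an empty triangle whose boundary relation carries a factor $1+t+t^2$. But in the paper's (and Yoshinaga's) set-up the deconing is performed \emph{along} one of the two sharp lines: one takes $\overline{H}=H_\infty$, so there is no third line to form a triangle with the sharp pair. The point $H_a\cap H_b$ becomes the point at infinity $P_0$, and its local contribution to the boundary matrix involves $1-t^{m(P_0)}$, not $1+t+t^2$. More generally, the special role of $3$ in the paper has nothing to do with a triangular chamber attached to the sharp pair.

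In the paper's approach the ``$3$'' enters through the points $P_i\in\mathcal{P}$ on the remaining sharp line $H_1^{P_0}$. Because $\A$ is sharp, $\bold{\widehat{U}}(P_i)=\emptyset$ for all such $P_i$, so the block $M_1$ of $Mat(\partial_2)$ diagonalizes cleanly into blocks $D(P_i)$ with entries $1-t^{m(P_i)}$ (Lemma~\ref{lemdiag2}). Lines through points with $\gcd(m(P_i),n+1)=1$ or $\gcd(m(P_i),m(P_0))=1$ are then removed for free (Remark~\ref{rkdiag}), and what remains is analysed via the homology graph $\mathcal{G}_{\A_0,\last}$. The key combinatorial lemma (Lemma~\ref{propmono3last}) shows that every obstructing cycle in this graph must pass through a vertex $H_h^{P_i}$ with $m(P_i)=3$; that is where $3$ comes from. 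Your propagation idea via a graph on meridians is in the right spirit, but the graph the paper uses is built from the structure of $Mat(\partial_2)$ (vertices $(H,P)$, edges recording $H'\in S(P)\cup\bold{\widehat{U}}_{Max}(P)\cup\mathcal{N}(P)$), not from resonance conditions at intersection points, and the obstruction is the presence of certain oriented cycles rather than disconnectedness. The side condition $\last(H_{m(P_i)-1}^{P_i})\neq\last(H_2^{P_j})$ in Theorem~\ref{thm1.1} is exactly what is needed to make the cycle analysis go through; removing it is the open problem, as you note.
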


\begin{conjecture}\cite[Conjecture 1.9]{PS2}
The only possible non trivial monodromies of order prime powers have order $3$ or $4.$
\end{conjecture}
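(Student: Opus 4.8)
The plan is to reduce the statement to the combinatorial rigidity of multinets, working throughout with the minimal complex $\CC_*(\mathcal{S}(\A))$ of the deconing $\A$ and its boundary map. First recall that the Milnor fiber $F$ of the cone over $\overline{\A}$ is the $(n+1)$-fold cyclic cover of $M(\A)$ defined by the total linking number homomorphism $\pi_1(M(\A))\twoheadrightarrow\Z/(n+1)$; hence for each $(n+1)$-th root of unity $\lambda$ the $\lambda$-eigenspace of $h_1$ is $H_1(M(\A),\LL_\lambda)$, where $\LL_\lambda$ is the rank-one local system sending each line meridian to $\lambda$. Twisting the differentials of $\CC_*(\mathcal{S}(\A))$ by $\lambda$ yields a length-three complex $\C^{b_0}\xrightarrow{\partial_1(\lambda)}\C^{b_1}\xrightarrow{\partial_2(\lambda)}\C^{b_2}$ with entries Laurent-polynomial in $\lambda$, so that a nontrivial eigenvalue of $h_1$ of order $d$ appears exactly when a primitive $d$-th root of unity $\lambda$ (with $d\mid n+1$) makes the ranks of $\partial_1(\lambda)$ and $\partial_2(\lambda)$ drop below their generic values. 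This is the mechanism behind the algorithm referred to above.

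The second step is to localize the differential. Grouping the columns of $\partial_2(\lambda)$ according to the intersection points of $\A$, the block attached to a point $p$ of multiplicity $m_p$ is, up to the $\lambda$-twist, the boundary map of the minimal complex of a pencil of $m_p$ concurrent lines, and it governs the contribution of $p$, which vanishes unless $\lambda^{m_p}=1$. Consequently a nonzero class in $H_1(M(\A),\LL_\lambda)$ for $\lambda$ of order $d>1$ is carried by the subarrangement $\A_\lambda\subseteq\A$ of lines through points of multiplicity divisible by $d$, on which the local data glue into a genuine cohomology class. By the dictionary between nonvanishing of twisted cohomology and pencils (Libgober--Yuzvinsky, Arapura), this class comes from a pencil of plane curves having the members of $\A_\lambda$ distributed among its completely reducible fibres; that distribution is a $k$-multinet on $\A_\lambda$, and $d\mid k$. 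The combinatorial hypothesis of the paper, phrased in terms of certain intersection points of $\A$, enters precisely here: it is what guarantees that every nontrivial eigenvalue is realized by such a multinet, with no stray contribution.

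Third, invoke the rigidity of multinets: a multinet has at most $k=4$ classes (Yuzvinsky, Pereira--Yuzvinsky), so $d\mid k\le 4$; if moreover $d=p^s$ is a nontrivial prime power then $d\in\{2,3,4\}$. It remains only to exclude $d=2$ — that is, a two-class situation, or a $4$-multinet carrying a primitive square but not primitive fourth root of unity — for which one uses the modular-resonance bound $\dim H_1(M(\A),\LL_\lambda)\le\beta_2(\A)$ of Papadima--Suciu together with the constraints imposed by the hypothesis on the intersection points, showing that any structure surviving this bound in fact refines to the order-$3$ or order-$4$ case. Combining these, $d\in\{3,4\}$, which is the claim.

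The step I expect to be the main obstacle is the implication ``rank drop at $\lambda$ $\Rightarrow$ genuine pencil/multinet''. Positive-dimensional components of the characteristic variety of $M(\A)$ are controlled by Arapura's theorem, but isolated torsion points lying on no positive-dimensional component are exactly the phenomenon that cannot be excluded by combinatorics alone, and it is they that obstruct an unconditional proof; the hypothesis on the intersection points of $\A$ is tailored to forbid them, after which Yuzvinsky's classification closes the argument. The second, more technical, difficulty is carrying out the $d=2$ exclusion inside this reduction, since it requires the finer mod-$2$ analysis rather than the purely rational pencil picture.
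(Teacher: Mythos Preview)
The statement you are attempting to prove is a \emph{conjecture} quoted from \cite{PS2}; the paper does not prove it, nor does it claim to. It is listed in the introduction among the open problems motivating the work, and the paper's actual results (Theorems \ref{thm1.1} and \ref{thm2.1}) are conditional statements for \emph{sharp} arrangements under explicit combinatorial hypotheses, obtained by a direct Gauss-reduction analysis of the boundary matrix of the minimal complex via the homology graph $\mathcal{G}(\A)$. There is therefore no ``paper's own proof'' to compare against.

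Your argument has a genuine gap, and you have in fact located it yourself. The conjecture is stated without any hypothesis on the arrangement: there is no ``combinatorial hypothesis of the paper'' attached to it, so when you write that this hypothesis ``is tailored to forbid'' isolated torsion points on the characteristic variety, you are importing assumptions that the statement does not contain. Stripped of that, your reduction runs as follows: nontrivial $d$-monodromy $\Rightarrow$ class in $H^1(M(\A),\LL_\lambda)$ $\Rightarrow$ pencil $\Rightarrow$ $k$-multinet with $d\mid k\le 4$. The second arrow is exactly what is not known in general; Arapura's theorem gives pencils only for positive-dimensional components of $V_1(\A)$, and nothing currently rules out isolated torsion characters contributing to $h_1$ for an arbitrary line arrangement. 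This is precisely why the conjecture remains open. Your handling of $d=2$ is likewise incomplete: the Papadima--Suciu modular bound does not by itself force $\beta_{1,2}=0$, and the paper makes no such claim. In short, the proposal is a plausible heuristic outline of why one expects the conjecture to hold, but it is not a proof, and the paper does not pretend otherwise.
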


\begin{conjecture}\cite{Yoshi2} If $\overline{\A}$ is a simplicial arrangement and $\A$ is not a-monodromic, then the eigenvalues of the monodromy operator $h_1$ are cubic roots of unity. \end{conjecture}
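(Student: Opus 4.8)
The natural first move is to reduce the final conjecture to Conjecture~\ref{conj:mon}. Indeed, if one shows that a simplicial arrangement $\overline{\A}$ always admits a sharp pair of lines --- which appears to be Yoshinaga's motivation for stating both conjectures, and which should follow by taking any chamber $C$ (a triangle, by simpliciality) together with the two lines carrying the sides of $C$ through one of its vertices --- then the hypothesis of Conjecture~\ref{conj:mon} is automatic, and when $\A$ is not a-monodromic its conclusion is exactly what we want. So, modulo this combinatorial reduction, the problem is Conjecture~\ref{conj:mon}; rather than rely on it I would try to prove the simplicial case directly, where the extra rigidity gives more to work with.

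The tool is the $\zeta$-twisted minimal cochain complex. For $\zeta$ a primitive $d$-th root of unity with $d\mid n+1$, the multiplicity $\beta_{1,d}$ appearing in~(\ref{eq:decomposition}) equals $\dim H^1(M(\A),\LL_\zeta)$, computed from $\CC_*(\mathcal{S}(\A))$ with the boundary maps $\partial_1^\zeta,\partial_2^\zeta$ written in terms of chambers and separating walls. The strategy is: (i) run the algorithm of this paper on a general simplicial arrangement to cut down the set of $d$ that can support $\beta_{1,d}\neq 0$; and (ii) show that the only survivor, for non a-monodromic $\A$, is $d=3$. The leverage of simpliciality is that every chamber is a triangle, so $\partial_2^\zeta$ contains a large supply of triangle relations (three walls, three powers of $\zeta$), which is precisely the kind of rigid local data the algorithm feeds on.

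I would first dispose of every $d\notin\{2,3,4\}$. Nonresonance --- vanishing of $H^1(M(\A),\LL_\zeta)$ when $d$ fails to divide the multiplicity of any intersection point of $\overline{\A}$ --- handles most $d$; the remaining prime powers $d$ for which $d$ divides some multiplicity and $d\mid n+1$ should be excluded using the triangle relations together with the divisibility constraints on the characteristic polynomial of $h_1$ in the spirit of \cite{lib-mil}, and the complex-conjugation symmetry $\overline{\LL_\zeta}\cong\LL_{\bar\zeta}$ (so that $\dim H^1(M(\A),\LL_\zeta)=\dim H^1(M(\A),\LL_{\bar\zeta})$). This step is bookkeeping rather than the heart of the matter, and by \cite[Conjecture~1.9]{PS2} one at least expects it to go through.

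The main obstacle is to eliminate $d=4$ (and $d=2$). Here the sharpness of some pair $(H,H')$ --- furnished by a triangle chamber --- must be used essentially. Crossing the two walls $H,H'$ at their shared vertex $p$ produces, in $\partial_2^\zeta$, a relation constraining the value of any $\zeta$-twisted $1$-cocycle at $p$; for $d\in\{2,4\}$ I would try to show that this relation, propagated along the connected pieces of the double-point graph $\Gamma(\A)$ --- which a simplicial arrangement necessarily carries in abundance, since most triangle edges are double points --- forces every $\zeta$-cocycle to be a coboundary, hence $H^1(M(\A),\LL_\zeta)=0$. Carrying out this global propagation, i.e.\ bootstrapping from a single local vanishing relation at a sharp vertex to the vanishing of the whole $\zeta$-eigenspace while controlling how the triangle relations and the components of $\Gamma(\A)$ interact across an arbitrary simplicial arrangement, is where I expect a genuinely new argument to be required.
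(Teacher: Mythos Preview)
The statement you are addressing is listed in the paper as a \emph{conjecture} (attributed to Yoshinaga), not a theorem; the paper offers no proof. In Example~\ref{ex:18lines} the authors explicitly say that even the special case of simplicial arrangements admitting two overlapping sharp pairs is ``part of a work in progress.'' So there is no paper proof to compare against.

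As for your proposal itself, it is a strategy sketch rather than a proof, and you acknowledge as much in the final paragraph. Two specific points where the sketch is weaker than you suggest. First, the claim that every simplicial arrangement admits a sharp pair via ``the two lines carrying the sides of a triangle chamber through one of its vertices'' is not obvious and, to my knowledge, not established: the sharp-pair condition requires \emph{all} intersection points of $\overline{\A}$ to lie in one closed region cut by those two lines, which is a global constraint, whereas bounding a common triangle is purely local to that chamber. Second, your assertion that in a simplicial arrangement ``most triangle edges are double points'' is unsupported; the arrangements $\A(6m,1)$ discussed in the paper carry many high-multiplicity points, and your elimination of $d\in\{2,4\}$ rests on propagation along the double-point graph $\Gamma(\A)$, which may well be disconnected. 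The paper's own corollaries invoke connectivity of $\Gamma(\A)$ as an \emph{additional hypothesis}, not something simpliciality provides for free.

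So while you have correctly identified the natural line of attack (reduce to sharp, then kill $d\neq 3$ via the minimal complex), the reductions themselves are not justified, and the core step---global propagation of the local vanishing at a sharp vertex---remains exactly the open problem.
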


The purpose of this paper is farther investigate those four conjectures in case of sharp arrangements, that is arrangements $\overline{\A}$ for which it exists a \textit{sharp} pair $(\overline{H},\overline{H'})$ of hyperplanes that satisfies the condition that all intersection points lie on $\overline{H},\overline{H'}$ or in the same region of $\PP_{\R}^2$ delimited by $\overline{H}$ and $\overline{H'}$.

In particular, in case of sharp arrangements, Conjecture \ref{conj:mon} is direct consequence of a more general conjecture which states that $d$-monodromy appears if and only if $d$-multinets appear.
Indeed by \cite[Theorem 3.1 (i)]{DP} and \cite[Theorem 3.11]{FalYuz} it is known that if a $d$-multinet on $\overline{\A}$ appears for some $d \geq 3$, then $\beta_{1,d} \geq d-2$. While by \cite[Theorem 3.21]{Yoshi2}, if $d \neq 1$ then $\beta_{1,d} \leq 1$. That is on sharp arrangements at most $3$ multinets can exist.

In this paper we study $H_1(F,\C)$ by mean of a minimal complex $\CC_*(\mathcal{S}(\A))$ homotopically equivalent to the complement $M(\A), $ builded in \cite{SS} by M. Salvetti and the second author. We give an alternative algorithm to compute $H_1(F,\C)$ to the one given in \cite{Yoshi2} by M. Yoshinaga and we prove that if a certain condition on some intersection points of lines in $\A$ is satisfied, then the only possible non trivial monodromy for the fiber $F$ has order $d=3$. Moreover if this condition is not satisfied, then we give sufficient conditions for monodromy to have order $d=3$ or $4$. \\
More in detail, if $\overline{\A}$ is a sharp arrangement and $\A$ is the affine arrangement obtained by removing one of the two lines in the sharp pair $(\overline{H},\overline{H'})$ ( e.g. $\overline{H}=H_{\infty}$ line at infinity),  then lines in $\A$ and intersection points in each line have a natural order, both denoted by $\pol$, given as follows. The other line in the sharp pair (e.g. $\overline{H'}$) is the line $H^{P_0}_1$, where $P_0=\overline{H} \cap \overline{H'}$. Fix a direction along line $H^{P_0}_1$. The intersection points in $H^{P_0}_1$ are ordered going along $H^{P_0}_1$ in positive direction and, for each point $P_i$, we order lines through it in the order they are intersected by a line $H^{P_0}_{(1,\epsilon)}$ oriented as $H^{P_0}_1$ and obtained translating $H^{P_0}_1$ by an epsilon in the direction of the half plane with no intersection point (see Figure \ref{fig1}). Finally, in each line, intersection points are ordered naturally from the first point which is the intersection with $H^{P_0}_1 (= \overline{H'})$ to the last real point  which is the last real intersection before the point at infinity (intersection with $\overline{H}$). Lines passing trough $P_0$ are oriented as $H^{P_0}_1$ and numbered from $H^{P_0}_2$, the closest one to  $H^{P_0}_1$, up to $H^{P_0}_{m(P_0)-1}$, last real one (here $m(P_0)$ is the multiplicity of $P_0$). With such an order, if $\last(H)$ is the last real intersection point along the line $H$, $\mini(H)$ the second one, and $H_j^{P_i}$ the $j$-th line passing trough the point $P_i \in H^{P_0}_1$, then $H_j^{P_0} \pol H_{j'}^{P_{0}}$ iff $j >j'$, $H_j^{P_i} \pol H_{j'}^{P_{i'}}$ iff $(i,j) <(i',j')$ in lexicographic order, and  the following results hold.
\begin{theorem}\label{thmint1} Let $\A \subset \R^2$ be a sharp arrangement such that $$\last(H_{m(P_i)-1}^{P_i}) \neq \last(H_2^{P_j}), \quad 0 \leq i <j, $$
holds for hyperplanes in $\A_{(0,3)}$. Then $\A$ is a- or 3-monodromic.
\end{theorem}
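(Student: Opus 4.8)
The plan is to work entirely inside the minimal cochain complex $\CC_*(\mathcal S(\A))$ of \cite{SS} and to track how the monodromy-twisted boundary maps act in the presence of a sharp pair. Recall that minimality gives $\CC_q$ free of rank equal to the number of $q$-cells of the Salvetti-type complex $\mathcal S(\A)$, and that $H_1(F,\C)$ decomposes as in \eqref{eq:decomposition}, so that a non-trivial eigenvalue of order $d$ appears exactly when the rank of $H_1$ of the local system with monodromy a primitive $d$-th root of unity $t$ jumps. Concretely, I would fix a primitive $d$-th root of unity $t$, form the twisted complex $(\CC_*(\mathcal S(\A))\otimes\C_t,\,\partial^t_*)$, and compute $\dim H_1 = \dim\coker(\partial^t_2) - (\operatorname{rk}\CC_0 - \operatorname{rk}\partial^t_1)$; the claim $\A$ is $a$- or $3$-monodromic is the assertion that this dimension vanishes for every $d\neq 1,3$.

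The key device is the order $\pol$ on lines and on intersection points that the sharpness of $(\overline H,\overline{H'})$ makes possible. First I would use this order to put $\partial^t_1$ and $\partial^t_2$ in a convenient (block-triangular) form: reading the $1$-cells in $\pol$-order, the matrix of $\partial^t_1$ becomes upper triangular with units on the diagonal except possibly at the cells $\mini(H)$, so that $\CC_0/\operatorname{im}\partial^t_1$ is controlled by the edges $\mini(H)$ alone; this is the geometric content of "$H_2^{P_j}$" being the relevant line at $P_j$. Then I would analyze $\partial^t_2$ on the remaining generators and show that the $2$-cells coming from intersection points of multiplicity $\geq 3$ impose relations which, for $t$ a primitive $d$-th root of unity with $d\neq 1,3$, force the surviving cycles to die — i.e. $H_1$ of the twisted complex vanishes — unless $d=3$. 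The hypothesis $\last(H_{m(P_i)-1}^{P_i})\neq\last(H_2^{P_j})$ for $0\le i<j$, restricted to the sub-arrangement $\A_{(0,3)}$ (the lines through points of multiplicity in $\{0,\dots,3\}$, in the paper's notation), is precisely what guarantees that the "last-point" relations coming from two distinct pencils are \emph{independent}: if two such $\last$-points coincided, the two relations could conspire to leave a one-dimensional twisted homology for some other $d$. So the proof is: (i) triangulate $\partial^t_1$ via $\pol$; (ii) reduce $\partial^t_2$ to a core indexed by multiple points; (iii) invoke the hypothesis to show the core relations are independent and, for $d\neq 1,3$, exhaust the potential cycles; (iv) conclude $\beta_{1,d}=0$ for all $d\neq 1,3$, which is the statement.

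I expect the main obstacle to be step (iii): controlling the rank of $\partial^t_2$ after the triangular reduction. The entries of the twisted boundary are polynomials in $t$ (roots of unity), and showing that the relations coming from different pencils through points of $H_1^{P_0}$ are simultaneously independent requires a careful sign/phase bookkeeping along each line — essentially a telescoping product along the ordered intersection points of $H$ from $\mini(H)$ to $\last(H)$ — and it is exactly here that the non-coincidence hypothesis on the $\last$-points is consumed. A secondary difficulty is to justify that it suffices to check the hypothesis on $\A_{(0,3)}$ rather than on all of $\A$: one must argue that higher-multiplicity points (multiplicity $\geq 4$) never produce new candidate eigenvalues beyond order $3$ in the sharp setting, which should follow from Yoshinaga's bound $\beta_{1,d}\le 1$ for $d\neq 1$ quoted in the introduction together with the multinet obstruction, i.e. that at most three multinets can coexist on a sharp arrangement. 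Once these two points are in place, the computation of $H_1$ of the twisted complex collapses to a finite, explicitly-signed linear-algebra problem whose only non-trivial solution occurs at $d=3$.
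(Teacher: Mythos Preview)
Your proposal misreads the definition of $\A_{(0,3)}$ and therefore inverts the logic of the argument. In the paper $\A_{(0,3)}$ is obtained from $\A_0$ by \emph{removing} the lines through points $P_i\in\mathcal P$ of multiplicity $3$; it is not ``the lines through points of multiplicity $\le 3$''. Consequently the direction of the reduction is the opposite of what you sketch: the goal is to show that, once the multiplicity-$3$ lines are set aside, the \emph{remaining} rows of $\partial_2$ (which include all higher-multiplicity pencils) can be Gauss-reduced to $\diag(1-t)$ over $\C[t,t^{-1}]$. This forces every nontrivial eigenvalue to come from the excised multiplicity-$3$ block, hence to have order $3$. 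Your ``secondary difficulty'' --- arguing that multiplicity $\ge 4$ points create no new eigenvalues via the Yoshinaga bound and multinet obstruction --- is not how the paper proceeds and would not by itself yield the conclusion.

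The substantive gap is that your step (iii), ``the $\last$-point relations are independent'', is not a mechanism. The paper's engine is the \emph{homology graph} $\mathcal G(\A)$: vertices are pairs $(H,P)$, edges encode the row operations $H'\mapsto H'-\varphi H$ needed to clear the column $c_H^P$, and Lemma~\ref{lemmacycle} shows that an obstruction to triangularisation is exactly a cycle in a chosen subgraph. The hypothesis $\last(H_{m(P_i)-1}^{P_i})\neq\last(H_2^{P_j})$ is used not to make relations ``independent'' but to guarantee that the chosen vertices $(H,\last(H))$, $H\in\A_{(0,3)}$, are pairwise distinct; the real work (Lemmas~\ref{lem1} and~\ref{propmono3last}) is a combinatorial classification of the possible edges in $\mathcal G_{\A_0,\last}$ (the list $E_1,\dots,E_6$ in Figure~\ref{fig:edge_glast}) followed by the observation that every cycle of the required shape must pass through a vertex $H_h^{P_i}$ with $m(P_i)=3$. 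None of this is visible in your outline. Also, $\partial_1$ plays no role here beyond the trivial $\partial_1(e_{[C_{i-1}\prec F_i]})=(1-t)e_{[C_0\prec C_0]}$; the $\mini(H)$ points enter only in the second graph $\mathcal G_{\A'_0,\last,\mini}$ used for Theorem~\ref{thmint2}, not for the present statement.
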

\begin{theorem}\label{thmint2}
 Let $\A$ in $\R^2$ be a sharp arrangement. If $\mathcal{G}_{\A'_{(0,3,4)},\last,\mini}$ does not contain any cycle of length $l$ as the one in Figure \ref{fig:cyclastmin} such that:
\begin{itemize}
\item $\overline{H}\rhd H'$ and $l$ is odd
\item $\overline{H}\pol H'$ and $l$ is even
\end{itemize} 
then $\A$ is a-, 3- or 4-monodromic.
\end{theorem}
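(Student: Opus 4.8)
The plan is to translate the statement into a rank computation for a twisted boundary matrix and then to read that rank off the graph $\G_{\A'_{(0,3,4)},\last,\mini}$. Fix $d\mid n+1$ with $d\neq 1$, let $\zeta$ be a primitive $d$-th root of unity, and let $\LL_\zeta$ be the rank-one local system on $M(\A)$ with monodromy $\zeta$ around every line of $\A$. By the decomposition \eqref{eq:decomposition} together with the usual identification of the order-$d$ eigenspace of $h_1$ with the twisted homology $H_1(M(\A),\LL_\zeta)$, one has $\beta_{1,d}=\dim_\C H_1(M(\A),\LL_\zeta)$. Computing this group with the minimal complex $\CC_*(\mathcal S(\A))$ of \cite{SS}, in which $\dim\CC_0=1$ and $\dim\CC_1=n$, and using that $H_0(M(\A),\LL_\zeta)=0$ for $\zeta\neq 1$ (so that $\partial_1^\zeta$ is surjective), we get
\begin{equation*}
\beta_{1,d}=\dim\ker\partial_1^\zeta-\operatorname{rank}\partial_2^\zeta=(n-1)-\operatorname{rank}\partial_2^\zeta .
\end{equation*}
So it suffices to prove that $\operatorname{rank}\partial_2^\zeta=n-1$ — equivalently, that $\partial_2^\zeta$ has a nonvanishing $(n-1)\times(n-1)$ minor — for every $d\notin\{1,3,4\}$.

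Next I would use the orders $\pol$ on the lines and on the intersection points coming from the sharp pair, together with the explicit description of $\partial_2$ in $\CC_*(\mathcal S(\A))$, to exhibit such a minor. Selecting, line by line, the $2$-cells attached to the points $\mini(H)$ and $\last(H)$ produces a natural family of $n-1$ columns; ordering rows and columns by $\pol$ and performing the elementary operations dictated by the sharp structure brings the resulting square matrix into a shape whose invertibility is governed \emph{locally} by the multiplicities of the relevant points and \emph{globally} only by the closed walks of $\G_{\A'_{(0,3,4)},\last,\mini}$. Concretely: the entries attached to double points, and to triple and quadruple points not lying on a cycle, are units times factors vanishing only for $\zeta$ of order $1$, $3$ or $4$; while the contribution of a cycle of length $l$ of the type of Figure \ref{fig:cyclastmin} is a unit times a binomial $1-(-1)^{\varepsilon}\zeta^{s}$, where the sign $(-1)^\varepsilon$ and the exponent $s$ are prescribed by the $\last/\mini$ labelling along the cycle and by whether $\overline H\rhd H'$ or $\overline H\pol H'$. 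The whole point of the parity/orientation dichotomy in the statement is that such a binomial can vanish for some $\zeta$ of order $\notin\{3,4\}$ precisely in the two excluded configurations ($\overline H\rhd H'$ with $l$ odd, or $\overline H\pol H'$ with $l$ even).

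It then remains to run the root-of-unity bookkeeping. Assume $\G_{\A'_{(0,3,4)},\last,\mini}$ contains no cycle of the forbidden type and fix $d\notin\{1,3,4\}$. The local factors at double, triple and quadruple points are nonzero because $d\neq 1,3,4$ — the only genuinely delicate case being $d=2$ at a quadruple point, which is eliminated by combining the local relation there with the absence of the corresponding even cycle — and the cyclic factors $1-(-1)^\varepsilon\zeta^{s}$ are nonzero because no forbidden cycle occurs. Hence the chosen minor is nonzero, so $\operatorname{rank}\partial_2^\zeta=n-1$ and $\beta_{1,d}=0$ for every $d\notin\{1,3,4\}$; that is, $\A$ is a-, $3$- or $4$-monodromic.

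The main obstacle is the reduction described in the second paragraph: transforming the matrix of $\partial_2^\zeta$, through operations compatible with $\pol$, into a normal form in which every potential degeneracy is accounted for either by the local multiplicity at a point or by an honest cycle of $\G_{\A'_{(0,3,4)},\last,\mini}$. This requires a careful, sign-sensitive analysis of how the points $\last$ and $\mini$ of the lines through a fixed intersection point interleave under $\pol$, and the attendant computation of the exponent $s$ and of the sign $(-1)^{\varepsilon}$ along a cycle — exactly where the parity of $l$ and the orientation condition $\overline H\rhd H'$ versus $\overline H\pol H'$ intervene — is the delicate point. Once this normal form is established, the remaining argument proceeds as in the proof of Theorem \ref{thmint1}, of which the present statement is the refinement that keeps track of the graph when the non-coincidence hypothesis of that theorem fails.
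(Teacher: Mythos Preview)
Your proposal has a genuine gap, and it rests on a misreading of what the parity/orientation dichotomy is doing.

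In the paper the argument is purely structural. After the reductions of Section~\ref{sec:sharp} one works with $\G_{\A'_0,\last,\mini}$; Lemma~\ref{lem2} forces any cycle into the alternating shape of Figure~\ref{fig:cyclastmin}, and Lemma~\ref{prop4} shows, by a direct combinatorial argument on the edge types $E_1,E_4,E_5,E_7,E_8$ of Figure~\ref{fig:edge_glmin}, that a cycle with ($\overline H\rhd H'$ and $l$ even) or ($\overline H\pol H'$ and $l$ odd) necessarily passes through a vertex $H_\bullet^{P_i}$ with $m(P_i)\in\{3,4\}$. Hence in $\G_{\A'_{(0,3,4)},\last,\mini}$ those cycles are already gone; the hypothesis of the theorem rules out the \emph{complementary} parity type, so no cycle remains. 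By Lemma~\ref{lemmacycle} and Remark~\ref{rkamono} the submatrix indexed by $\A'_{(0,3,4)}$ then triangularizes over $\C[t,t^{-1}]$ with pivots $1-t$ (Proposition~\ref{thm2}), and the hyperplanes through multiplicity-$3$ and $4$ points on the sharp line, already diagonalized in $D(M_1)$, contribute only $1-t^3$ and $1-t^4$.

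Your approach instead posits that, after reduction, the determinant of a chosen $(n-1)\times(n-1)$ minor factors into local pieces and cycle pieces of the form $1-(-1)^\varepsilon\zeta^s$, with $\varepsilon$ and $s$ governed by $l$ and by $\overline H\lessgtr H'$. You then interpret the theorem's parity condition as precisely the condition under which such a binomial can vanish for $\zeta$ of order $\notin\{3,4\}$. None of this is established: you yourself flag the normal form and the computation of $(\varepsilon,s)$ as ``the main obstacle'' and ``the delicate point'', and the paper neither proves nor uses any such factorization --- it never computes what a cycle contributes, it arranges for there to be \emph{no} cycles. So the step that carries the entire weight of your argument is missing, and your explanation of why the parity split appears is not the actual reason. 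A smaller issue: the column selection should be one column per line (namely $c_H^{\last(H)}$ for $H\neq H_2^{P_i}$ and $c_H^{\mini(H)}$ for $H=H_2^{P_i}$), not both $\mini$ and $\last$ for every line.
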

Here $\A_{(0,3)}$ and $\A_{(0,3,4)}$ are the sub-arrangements of $\A$ defined, respectively, in equations (\ref{eq:A03}) and (\ref{eq:A034}), $\mathcal{G}_{\A'_{(0,3,4)},\last,\mini}$ is the graph defined in \ref{def:grafGLM} and vertices and edges in $\mathcal{G}_{\A'_{(0,3,4)},\last,\mini}$ and in cycle in Figure \ref{fig:cyclastmin} are drawn as described in Figure \ref{fig:edge}. \\
The above theorems partially answer to Conjecture \ref{conj:amon} as it has been proved in \cite{Bailet} by the first author that if the non trivial monodromies of the fiber $F$ have order a power of a prime, then Conjecture  \ref{conj:amon} holds. Moreover the algorithm described to prove theorems above can be applied, more in general, to study a-monodromicity of $\A$.\\
The paper is organized as follows. In Section \ref{sec:Mincomp} we recall the building foundation which are used throughout this paper. In Section \ref{maingraph} we describe a new graph based on the boundary map defined in \cite{gaiffi2009morse} and we give fundamental Lemma's that state relation between cycles in this graph and Gauss operations to triangularize the boundary map. In Section \ref{sec:sharp} we introduce main simplification that holds for boundary map when the arrangement $\A$ is sharp and, finally, in Section \ref{sec:mains} we define special graphs associated to the graph defined in Section \ref{maingraph} and we use them to prove Theorem \ref{thmint1} and Theorem \ref{thmint2}, stated as Theorem \ref{thm1.1} and Theorem \ref{thm2.1}. In the last Section we give examples to illustrate how our algorithm can be applied also to immediately state a-monodromicity.\\
Many questions are left open as, for instance, if there are conditions that allow to further simplify our algorithm to finally get Conjecture \ref{conj:mon} in case of sharp arrangements and, of course, what can be said if we extend it also to non sharp arrangements. This will be object of further studies.

\section{Minimal complex for real line arrangements}\label{sec:Mincomp}

In this section we will recall the complex defined in \cite{SS} in the special case of line arrangements (see also \cite{gaiffi2009morse}) and we will also introduce notations and definitions useful in the rest of the paper. 

Let us denote by $L(\A)$ the lattice of intersection of $\A$ and by $\St(\A)$ the stratification of $\R^2$ into facets induced by the arrangement.

\subsection{Minimal complex and boundary map}
 
 In \cite{SS} Salvetti and the second author described a minimal complex $\CC_*(\A)$ homotopically equivalent to the complement $M(\A)$ of a real complexified arrangement $\A.$ Their construction is based on three main ingredients:  
\begin{enumerate} 
\item a polar order on the stratification $\St(\A)$ induced by an \textit{ $\A$-generic} polar system of coordinates; 
\item the Salvetti's complex introduced in \cite{Sal};
\item the discrete Morse theory. 
\end{enumerate}
In order to fix a polar system of coordinates in $\R^2$, an origin $V_0$ and a line $V_1$ containing $V_0$ are needed; then the polar coordinates of a point will be determined by the distance from $V_0$ and the rotation angle of the line $V_1$ inside $\R^2$. 
In order for this polar system to be \textit{$\A$-generic}, $V_0$ has to lie in an unbounded chamber $C_0$ of $\St(\A)$ and the line $V_1$ containing $V_0$ has to intersect all hyperplanes $H_i \in \A$ once in an unbounded facet $F_i \subset H_i$ of the stratification $\St(\A)$. Let us remark that, with this choice of polar system, all the points $P \in L(\A)$ lie on the same side of $V_1$. \\
 We will denote by $\pol$ the polar order induced on the stratification $\St(\A)$ of $\R^2$ by the polar system of coordinates $(V_0$, $V_1)$. This restricts to an order on the hyperplanes of $\A$ given by the distance from $V_0$ of their intersection with $V_1$.\\
Let us now recall that the $k$-cells of the Salvetti's complex are pairs $[C \prec F^k]$ in which $C$ and $F^k$ are, respectively, a chamber and a $k-$facet of the stratification $\St(\A)$ such that $F^k$ is contained in the closure of $C$. More in general if $F,G \in \St(\A)$ we write $G \prec F$ if $F$ is contained in the closure of $G$.\\ 
The cells of the minimal complex are the cells of the Salvetti's complex which are critical with respect to a suitable discrete Morse function. Explicitely in case of line arrangement:

\begin{itemize}
\item the critical $2-$cells are the pairs $[C \prec P]$ such that the point P is maximal with respect to the polar ordering, among the facets of $\St(\A)$ contained in the closure of $C$;
\item the critical $1-$cells are the pairs $[C \prec F]$ such that $C \pol F$ and $F$ is a $1$-dimensional facet intersected by $V_1$;
\item the only critical $0-$cell is the pair $[C_0\prec C_0]$, $V_0 \in C_0$.
\end{itemize}

In \cite{gaiffi2009morse} Gaiffi and Salvetti gave, in case of line arrangements, a simplified version of the boundary map defined in \cite{SS}. Following their notations, for any point $P\in L_2(\A),$ denote by $$S(P):=\{H\in \A\,|\,P \in H\}$$ the set of hyperplanes containing $P.$ For a given critical $2$-cell  $[C \prec P],$ there are two lines in $S(P)$ which bound $C$. Denote by $H_C$ (resp. $H^C$) the one which has minimum (resp. maximum) index. Let $\Cone(P)$ be the closed cone bounded by the minimum and the maximum lines of $S(P),$ having vertex $P$ and whose intersection with $V_1$ is bounded. Then define
\begin{equation*}
\begin{split}
& U(C):=\{H_i \in S(P) \mid i \geq \mbox{ index of } H^C \},\\
& L(C):=\{H_i \in S(P) \mid i \leq \mbox{ index of } H_C \}.\\
\end{split}
\end{equation*}

Consider a line $V_{P}$ that passes through the points $V_0$ and $P$. This line intersects all the lines $H\in \A$  in points $P_H$ and, if $\theta(P_H)$ is the length of the segment $V_0P_H$, then define the set 
$$U(P):=\{H \in \A \mid \theta(P_H)> \theta(P)\}.$$
Then the map $\partial_2$ in \cite{gaiffi2009morse} is given by

\begin{equation}\label{eq:boundary2}
\begin{split}
&\partial_2(l.e_{[C \prec P]})=\sum_{\substack{ |F_i|\in S(P)}}\bigg(\prod_{\substack{k<i~s.t.\\H_k\in U(P)}}t_{H_k}\bigg)\bigg[\prod_{\substack{k~s.t.\\H_k\in[C\rightarrow|F_i|)}}t_{H_k}-\prod_{\substack{k<i~s.t.\\H_k\in S(P)}}t_{H_k}\bigg](l).e_{[C_{i-1}\prec F_i]}+\\
&\sum_{\substack{|F_i|\in U(P)\\F_i\subset \Cone(P)}}\bigg(\prod_{\substack{k<i~s.t.\\H_k\in U(P)}}t_{H_k}\bigg)\bigg(1-\prod_{\substack{k<i~s.t.\\ H_k\in L(C)}}t_{H_k}\bigg)\bigg(\prod_{\substack{k<i~s.t.\\H_k\in U(C)}}t_{H_k}-\prod_{\substack{k~s.t.\\H_k\in U(C)}}t_{H_k}\bigg)(l).e_{[C_{i-1}\prec F_i]},
\end{split}
\end{equation}
where $l \in \Lbb$ is an abelian local system, $|F_i|=H_i$ is the affine subspace spanned by $F_i,$ and $[C\rightarrow |F_i|)$ are the subsets of $S(P)$ defined by
\begin{enumerate}
\item[i)] $[C\rightarrow|F_i|):=\{H_k\in U(C)~|~k<i\}$ if $|F_i|\in U(C)$;
\item[ii)] $[C\rightarrow|F_i|):=\{H_k\in S(P)~|~k<i\}\cup U(C)$ if $|F_i|\in L(C)$.
\end{enumerate}
Furthermore, because the only critical $0$-cell is $[C_0\prec C_0]$, the boundary map $\partial_1$ can be easily computed:
$$\partial_1(l.e_{[C_{i-1}\prec F_i]})=(1-t_{H_i})(l).e_{[C_0\prec C_0]}.$$

Notice that computing the (co)homology of the Milnor fiber with integer coefficients is equivalent to set, in the above boundary map, all the elements $t_H \in$ Aut$(\Lbb)$ equal to the same $t\in$ Aut$(\Lbb)$.

\begin{remark}
Notice that each entry of the boundary matrix given by formula (\ref{eq:boundary2}) above is divisible by $1-t,$ by minimality of the complex $\CC_*(\A).$
\end{remark}

\subsection{Polar ordering and indexation of the lattice $L(\A)$.}\label{polarorder}
In this subsection we will list definitions and notations useful in the rest of the paper. Let $L_2(\A)$ be the set of rank $2$ elements in the intersection lattice $L(\A)$, i.e. intersection points of lines in $\A$.
\begin{enumerate}
\item[1.] It is an obvious remark that the critical $1$-cells $[C \prec F]$ are in one to one correspondence with the hyperplanes $H= |F|$ of the arrangement. Hence in the rest of the paper we will simply use $H$ instead of $[C \prec F]$.
\item[2.] The second boundary map defined in equation (\ref{eq:boundary2}) is the sum of two addends, one is a summation on the lines belonging to $S(P)$, the second one a summation on the lines belonging to the set $U(P)$ which are in the Cone of $P$. Given a point $P \in L_2(\A)$ let's then define the \textit{Upper Cone} of $P$ as the set of lines
\begin{equation}\label{eq:upper}
\bold{\widehat{U}}(P):=\{ H=|F| \in U(P) \mid F\in \Cone(P)\}.
\end{equation}
\begin{example}Looking at the arrangement in Figure \ref{fig1}, $\Cone(P)$ is the Cone between the two lines $H_{m(P_1)}^{P_1}$ and $H_{2}^{P_m}$. Any line $H^{P_i}_j$, $1<i<m$, is in  $\Cone(P)$, but, for example, the line $H_{2}^{P_2}$ is in the upper $U(P)$ of P, i.e. $H_{2}^{P_2} \in \bold{\widehat{U}}(P)$, while $H_{m(P_2)}^{P_2}$ passes clearly below the point $P$, i.e. $H_{m(P_2)}^{P_2} \notin \bold{\widehat{U}}(P)$.
\end{example}
\item[3.] Given a point $P\in L_2(\A),$ we will denote the hyperplanes in $S(P)$ by 
\begin{equation}\label{notationlocal}
H_1^P \pol \cdots \pol H_{m(P)}^P,
\end{equation}
where $m(P)$ is the multiplicity of $P$ in $\A$ and the hyperplanes $H_i^P$ are ordered following the order $\pol$ in $\A.$
\item[4.] Given a point $P\in L_2(\A)$ and following the previous notation for the lines in $S(P),$ we denote by $C_j^P$ the unique chamber  $C_j^P \pol P$ with walls facets $F$ and $F'$, $|F|=H_{j}^P, \,|F'|=H_{j+1}^P$.
\item[5.] Given a line $H$, the order $\pol$ on the stratification $\St(\A)$ induces a local order on the intersections points 
\begin{equation}\label{ord:points}
P_1^H \pol  P_2^H  \pol \cdots \pol P_{k-1}^H \pol P_k^H
\end{equation}
in $L_2(\A)$ belonging to $H$. In particular in the rest of the paper we will denote by $\last(H)$ the last point in the above order and by $\mini(H)$ the second one. (This latter notation will be clearer in the following of the paper).
\end{enumerate}

\begin{remark} The description of the set $U(P)$ (and hence $\bold{\widehat{U}}(P)$) can be given in terms of the intersection lattice simply using the order in (\ref{ord:points}). Indeed, for example, if $P=P_i^H$ is a point in $H$, a line $H' \pol H$ will be in $U(P)$ if and only if $H' \cap H \neq \emptyset$ is a point $P_j^H$ with $j < i$.  
\end{remark}

\subsection{The Milnor Fibre of line arrangements}

Let $Q: \C^3 \rightarrow \C$ be the defining polynomial (of degree $n+1$) of the projective line arrangement $\overline{\A}$. Then the Milnor Fiber is defined as $F=Q^{-1}(1)$ with geometric monodromy
\begin{equation*}
\begin{split}
\pi_1(\C^*,1&) \rightarrow Aut(F) \\
& \alpha \rightarrow e^{\frac{2\pi i}{n+1}}\alpha 
\end{split}
\end{equation*}
that induces the monodromy operator $h_q: H_q(F,A) \to H_q(F,A)$, A being any unitary commutative ring. Let $R:=A[t,t^{-1}]$ and $R_t$ be the ring $R$ endowed with the $\pi_1(M(\overline{\A}))$-module structure given by the abelian representation
$$
\pi_1(M(\overline{\A})) \rightarrow H_1(M(\overline{\A}); A) \rightarrow Aut(R)
$$ 
taking a generator $\beta_j$ into $t$-multiplication. Then it is a well know fact that
$$
H_*(M(\overline{\A}); R_t) \simeq H_*(F;A)
$$
where $t$-multiplication on the left corresponds to the monodromy action on the right. \\
Consider now $A=\C$, $R=\C[t,t^{-1}]$, then $H_*(M(\overline{\A}); R_t)$ decomposes into cyclic modules either isomorphic to $R$ or to $\frac{R}{(\varphi_d)}$, where $\varphi_d$ is the $d$-cyclotomic polynomial with $d \mid (n+1)$. Following notations in \cite{Sal-Ser} we will call an arrangement $\overline{\A}$ \textit{a-monodromic} if it has trivial monodromy, that is if and only if 
$$
H_1(F;\C) \simeq \C^n \quad 
$$
and, more in general, \textit{d-monodromic} if the cyclic module $\frac{R}{(\varphi_d)}$ appears in the decomposition of $H_1(F;\C)$. Analogously if $ \A$ is the affine arrangement deconing of $\overline{\A}$, we will call $\A$ \textit{a-monodromic} if 
$$
H_1(M(\A);R_t) \simeq \C^n \quad 
$$
and \textit{d-monodromic} if the cyclic module $\frac{R}{(\varphi_d)}$ appears in the decomposition of $H_1(M(\A);R_t)$.
Remark that if $\A$ is a-monodromic $\overline{\A}$ is (the converse is not true in general).

\section{Homology graph of line arrangements}\label{maingraph}

Let $Mat(\partial_2)$ be the second boundary matrix defined by the map in equation (\ref{eq:boundary2}). For the sake of simplicity, we will denote by 
\begin{enumerate}
\item[i)] $H$ the row corresponding to the critical $1-$cell $[C \prec F],\,|F|=H$; 
\item[ii)] $c^P$ the columns block relative to the critical $2-$cells $[C \prec P]$.
\end{enumerate}
More in detail, following the local notations in subsection \ref{polarorder}, we will denote by $c_{j}^P$ the column corresponding to the critical $2-$cell $[C_{j}^P\prec P].$ With these notations we have $$c^P=\{c_1^P,\hdots,c_{m(P)-1}^P\}.$$
Rows and columns of $Mat(\partial_2)$ are ordered following the polar ordering of the corresponding critical $1$ and $2-$cells. We have the following straightforward remark.

\begin{notation}\label{not:goodcolumn} Given a point $P \in L_2(\A)$ and a line $H \in S(P)$, if $e(H,c)$ denotes the entry of $Mat(\partial_2)$ corresponding to row $H$ and column $c$, then 
$$e(H_1^P,c_{m(P)-1}^P)=t^{\alpha_1}(1-t)$$ and $$e(H_i^P,c_1^P)=t^{\alpha_i}(1-t), \quad i>1, $$
where $\alpha_1$ and the $\alpha_i$ are positive integers.
Hence we denote by $c_H^P$ the column of the block $c^P$ such that
\begin{equation*}
\begin{split}
& c_H^P = c_{m(P)-1}^P, \,\,\mbox{if} \,\, H=H_1^P,\\
& c_H^P= c_1^P\,\mbox{otherwise},
\end{split}
\end{equation*} 
in such a way that $e(H,c_H^P)$ has the form $t^\alpha(1-t).$
\end{notation}

Our goal is to diagonalize the matrix $Mat(\partial_2).$ In order to do it we introduce the \textit{homology graph} $\mathcal{G}(\A)$ defined as follows: 
\begin{enumerate}
\item[i)] vertices are couples $(H,P),$ with $H\in \A$ and $P\in L_2(\A)$ such that $P\in H$;
\item[ii)] two vertices $(H,P)$ and $(H',P^\prime)$ are connected by an edge $[(H,P),(H',P'),\pol]$ (resp. $[(H,P),(H',P'),\rhd]$) oriented from $(H,P)$ to $(H',P^\prime)$ if and only if $H^\prime \in S(P)\cup \bold{\widehat{U}}(P)$ (see Figure \ref{fig:edge}) and $H \pol H^\prime$ (resp. $H \rhd H^\prime$).
\end{enumerate}
For simplicity, we will sometimes denote by $[(H,P),(H',P')]$ the edge from $(H,P)$ to $(H',P^\prime)$ when the order between $H$ and $H'$ is not needed.
\begin{figure}[htbp]
\centering
\begin{tikzpicture}

\tikzset{vertex/.style = {shape=rectangle,minimum size=0.1em}}
\tikzset{edge/.style = {->,> = latex'}}
\node[vertex] (a) at  (0,0) {$(H,P)$};
\node[vertex] (b) at  (3,0) {$(H',P')$};

\node[vertex] (c) at  (1.5,0.4) {$\pol$};

\draw[edge] (a.east)  to[bend left] (b.west);

\end{tikzpicture}
\caption{Edge $[(H,P),(H',P'),\pol]$ in $\mathcal{G}(\A)$: $H,H' \in \A$, $H \pol H'$, $P,P'$ intersection points $P \in H,P' \in H'$ and $H^\prime \in S(P)\cup \bold{\widehat{U}}(P)$, $S(P)$ set of lines containing $P$ and $\bold{\widehat{U}}(P)$ defined in (\ref{eq:upper}).}
\label{fig:edge}
\end{figure}

Given the matrix $Mat(\partial_2),$ the graph $\mathcal{G}(\A)$ defines operations on $Mat(\partial_2)$ as follows. The edge $[(H,P),(H^\prime,P^\prime)]$ in $\mathcal{G}(\A)$ is equivalent to the operation 
\begin{equation}\label{mainop}
O_{H,H^\prime}^{P,P^\prime}(\varphi^\prime)= H^\prime - \varphi^\prime H
 \end{equation}
in $Mat(\partial_2),$ where $\varphi^\prime \in \C[t,t^{-1}]$ is the polynomial satisfying $e(H^\prime, c_H^P)= \varphi^\prime e(H,c_H^P),$ $c_H^P$ being the column defined in Notation \ref{not:goodcolumn}. Notice that $\varphi^\prime$ exists as $e(H,c_H^P)=t^\alpha(1-t).$ We will say that we perform operations along a subgraph in $\mathcal{G}(\A)$ if we perform all the rows operations $O_{H,H^\prime}^{P,P^\prime}$ for edges $[(H,P),(H^\prime,P^\prime)]$ in this subgraph. The following definition will be used often in the rest of the paper.

\begin{definition}\label{def:remove}
We say that an hyperplane $H$ is \textit{removed} by operations along a subgraph $\delta$ in $\mathcal{G}(\A)$ if $(H,P)$ is a vertex in $\delta$ and performing operations $O_{H,H^\prime}^{P,P^\prime}, H^\prime \rhd H$, along this subgraph, the column $c_H^P$ is reduced to a column with all entries $e(H^\prime, c_H^P)=0$ if $H^\prime \rhd H$ and $e(H,c_H^P)=1-t.$ All other entries of $c_H^P$ are left unchanged.
\end{definition}

\begin{remark}\label{rk:removetrivial}
Let us remark that if $H$ is removed by operations along a subgraph $\delta$, then using simple columns operations, the row $H$ can be reduced to a row with all entries 0 but $e(H,c_H^P)=1-t$ without changing the entries $e(H^\prime,c^{P^\prime}),$ for all the $H^\prime \rhd H.$  
Hence if we can find a subgraph of $\mathcal{G}(\A)$ that allows us to remove all hyperplanes of $\A$ at once,
then the matrix $Mat(\partial_2)$ can be triangularized with pivots $1-t$ and rows ordered following the polar order, that is $\A$ is a-monodromic. 
\end{remark}

From now on we will assume that operations along the graph are performed following the polar order, that is performing all operations $O_{H,H^\prime}^{P,P^\prime}$ for all $H^\prime \rhd H$, where each $H$ is fixed step by step following the polar order.
In order to find a suitable graph $\delta$ to remove all hyperplanes at once, we need to study how the entry $e(H,c_H^P)= t^\alpha(1-t)$ is changed when we perform row operations along $\delta$ to remove $\overline{H}\pol H.$ It is an easy remark on Gauss reduction that we can keep removing hyperplanes $\overline{H}\pol H$ along the graph $\delta$ following the row (i.e. polar) order, without affecting entry $e(H,c_H^P)= t^\alpha(1-t)$ until we get the following submatrix (where the symbol * means a non zero entry) 
\begin{equation}\label{submatrixproblem}
\bordermatrix{
&c_{H'}^{P'}&c_{H}^{P}\cr
H'&S(P') & *\cr
H& * & S(P)\cr}
\end{equation}
for a given row $H^\prime \pol H$ such that $(H',P') \in \delta$. Indeed, in this case, in order to simplify the entry $e(H,c_{H^\prime}^{P^\prime})$ in the column $c_{H^\prime}^{P^\prime},$ we perform the row operation $H- \varphi H',$ where $\varphi\,\,\text{satisfies}\,\,e(H,c_{H'}^{P'})= \varphi e(H',c_{H'}^{P'}).$ Then the entry $e(H,c_H^P)=t^\alpha (1-t)$ is changed and, in general, we don't know if it will keep the form $t^\beta (1-t).$ 

\begin{lemma}\label{lemmacycle}If performing row operations along a subgraph $\delta$ of $\mathcal{G}(\A)$ to triangularize the matrix $Mat(\partial_2),$ we get the submatrix (\ref{submatrixproblem}) in the Gauss transformed matrix, then there exists a cycle in $\delta$ as the one in Figure \ref{fig:cycle} that contains $(H,P)$ and $(H^\prime,P^\prime)$. 

\begin{figure}[htbp]
\centering
\begin{tikzpicture}

\tikzset{vertex/.style = {shape=rectangle}}
\tikzset{edge/.style = {->,> = latex'}}
\node[vertex] (h) at  (12,0) {$(H,P)$};
\node[vertex] (h-h1) at  (10.5,0.4) {$\pol$};
\node[vertex] (h1) at  (9,0) {$(H_{i_1},P_{i_1})$};
\node[vertex] (h1-h2) at  (7.5,0.4) {$\pol$};
\node[vertex] (h2) at  (6,0) {$(H_{i_2},P_{i_2})$};

\node[vertex] (pointshaut) at  (4.5,0) {$\hdots$};

\node[vertex] (hk) at  (3,0) {$(H_{i_k},P_{i_k})$};\node[vertex] (hk-hbar) at  (1.5,0.4) {$\pol$};
\node[vertex] (hbar) at (0,0) {$(\overline{H},\overline{P})$};

\node[vertex] (h'-hbar) at  (-1,-1) {$\nabla \,\text{or}\,\Delta$};
\node[vertex] (h') at  (0,-2) {$(H',P')$};
\node[vertex] (h'-hk1) at  (1.5,-2.4) {$\rhd$};
\node[vertex] (hk1) at  (3,-2) {$(H_{i_{k+1}},P_{i_{k+1}})$};
\node[vertex] (hk1-hk2) at  (4.5,-2.4) {$\rhd$};
\node[vertex] (hk2) at  (6,-2) {$(H_{i_{k+2}},P_{i_{k+2}})$};

\node[vertex] (pointsbas) at  (7.5,-2) {$\hdots$};

\node[vertex] (hks) at  (9,-2) {$(H_{i_{k+s}},P_{i_{k+s}})$};
\node[vertex] (hks-htilde) at  (10.5,-2.4) {$\rhd$};
\node[vertex] (htilde) at  (12,-2) {$(\widetilde{H},\widetilde{P})$};

\node[vertex] (h-htilde) at  (12.4,-1) {$\nabla$};


\draw[edge] (hbar.east)  to[bend left] (hk.west);
\draw[edge] (h2.east)  to[bend left] (h1.west);
\draw[edge] (h1.east)  to[bend left] (h.west);
\draw[edge] (h'.north)  to[bend left] (hbar.south);
\draw[edge] (hk1.west)  to[bend left] (h'.east);
\draw[edge] (hk2.west)  to[bend left] (hk1.east);
\draw[edge] (htilde.west)  to[bend left] (hks.east);
\draw[edge] (h.south)  to[bend left] (htilde.north);

\end{tikzpicture}
\caption{Cycle $\gamma$ in $\delta$}
\label{fig:cycle}
\end{figure}
\end{lemma}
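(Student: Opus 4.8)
The plan is to reconstruct the cycle of Figure \ref{fig:cycle} by tracking, through the Gauss reduction performed along $\delta$, the history of the two off-diagonal non-zero entries $e(H,c_{H'}^{P'})$ and $e(H',c_H^P)$ of the submatrix (\ref{submatrixproblem}). The basic mechanism is that the operation $O_{A,B}^{Q,Q'}=B-\varphi A$ attached to an edge $[(A,Q),(B,Q')]$ of $\delta$ alters only row $B$, and in each column only by adding to it a multiple of the corresponding entry of row $A$. Hence an entry $e(X,c)$ that vanishes in $Mat(\partial_2)$ can become non-zero at some stage only if there is a directed path of already-performed edges of $\delta$ from a vertex $(Y_0,Q_0)$ with $e(Y_0,c)\neq 0$ in $Mat(\partial_2)$ to $(X,Q_X)$. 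A point to be settled first is that along such a path no contribution cancels: this follows from the divisibility of every entry of $Mat(\partial_2)$ by $1-t$ (minimality of $\CC_*(\A)$) together with the explicit form of (\ref{eq:boundary2}).

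Using that configuration (\ref{submatrixproblem}) is reached precisely when every hyperplane $\pol H'$, but not $H'$, has been removed — so that $e(H,c_H^P)$ still equals $t^{\alpha}(1-t)$ — I apply the mechanism to $e(H,c_{H'}^{P'})\neq 0$, with $c=c_{H'}^{P'}$ the pivot column of $H'$; since the support of $c_{H'}^{P'}$ in $Mat(\partial_2)$ lies in $S(P')\cup\bold{\widehat{U}}(P')$, this yields a vertex $(\overline H,\overline P)$ with $\overline H\in S(P')\cup\bold{\widehat{U}}(P')$, $\overline H\neq H'$ (as $H'$ is unremoved), and a chain of performed removal edges
\[
(\overline H,\overline P)\to(H_{i_k},P_{i_k})\to\cdots\to(H_{i_1},P_{i_1})\to(H,P),\qquad\overline H\pol H_{i_k}\pol\cdots\pol H_{i_1}\pol H,
\]
the edge into $(H_{i_j},P_{i_j})$ being produced by removing its predecessor (so all of $\overline H,H_{i_k},\dots,H_{i_1}$ precede $H'$). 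Since $\overline H\in S(P')\cup\bold{\widehat{U}}(P')$, the definition of $\mathcal{G}(\A)$ furnishes the edge joining $(H',P')$ to $(\overline H,\overline P)$, oriented $\nabla$ or $\Delta$ according to the order of $H'$ and $\overline H$; it belongs to $\delta$ as one of the edges dictated by the (attempted) removal of $H'$, cf.\ Definition \ref{def:remove}. This is the upper strand of Figure \ref{fig:cycle}.

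Symmetrically, I apply the mechanism to $e(H',c_H^P)\neq 0$, with $c=c_H^P$ the pivot column of the still-unremoved $H$, whose support lies in $S(P)\cup\bold{\widehat{U}}(P)$ and contains $H$ (Notation \ref{not:goodcolumn}). This produces a vertex $(\widetilde H,\widetilde P)$ with $\widetilde H\in S(P)\cup\bold{\widehat{U}}(P)$, $\widetilde H\neq H$, heading a chain of performed edges of $\delta$
\[
(\widetilde H,\widetilde P)\to(H_{i_{k+s}},P_{i_{k+s}})\to\cdots\to(H_{i_{k+1}},P_{i_{k+1}})\to(H',P'),\qquad \widetilde H\rhd H_{i_{k+s}}\rhd\cdots\rhd H_{i_{k+1}}\rhd H',
\]
and closed on the $(H,P)$ side by the edge of $\mathcal{G}(\A)$ joining $(H,P)$ to $(\widetilde H,\widetilde P)$ — which exists because $\widetilde H\in S(P)\cup\bold{\widehat{U}}(P)$, carries the orientation $\nabla$ of Figure \ref{fig:cycle}, and lies in $\delta$. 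This is the lower strand.

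Concatenating the upper strand, read as $(H',P')\to(\overline H,\overline P)\to\cdots\to(H,P)$, with the lower strand $(H,P)\to(\widetilde H,\widetilde P)\to\cdots\to(H',P')$ produces a closed walk in $\delta$ of exactly the shape of Figure \ref{fig:cycle} through $(H,P)$ and $(H',P')$; since the hyperplane coordinate is $\pol$-monotone in opposite senses along the two strands, the strands meet only at those two vertices, and a cycle $\gamma\subseteq\delta$ of the required form containing $(H,P)$ and $(H',P')$ is extracted at once. The hard part will be the book-keeping of the first paragraph — tracing each bad entry back to $Mat(\partial_2)$ without accidental cancellation, recording at which stage and with which orientation every performed edge contributes, and locating the first occurrence of (\ref{submatrixproblem}) — together with the verification that the two connecting edges $[(H',P'),(\overline H,\overline P)]$ and $[(H,P),(\widetilde H,\widetilde P)]$ really lie in $\delta$, not merely in $\mathcal{G}(\A)$; the geometric input, that membership in $S(\cdot)\cup\bold{\widehat{U}}(\cdot)$ produces the needed edges of $\mathcal{G}(\A)$, is immediate.
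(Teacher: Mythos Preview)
Your approach is essentially the paper's: both trace the two off-diagonal entries $e(H,c_{H'}^{P'})$ and $e(H',c_H^P)$ back through the performed row operations to find directed chains in $\delta$, then close the cycle with the two connecting edges forced by membership in $S(\cdot)\cup\bold{\widehat{U}}(\cdot)$. The paper does this as a bare four-case analysis (entry originally non-zero vs.\ became non-zero via a chain), without your additional machinery on non-cancellation or on ordering relative to $H'$.

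One slip to fix: on the lower strand you write $\widetilde H\rhd H_{i_{k+s}}\rhd\cdots\rhd H_{i_{k+1}}\rhd H'$, but this contradicts your own premise that these are \emph{performed} edges. A performed removal operation $O_{A,B}=B-\varphi A$ has $A\pol B$, and since every source in the chain has already been removed when the submatrix is detected at $H'$, the chain must read $\widetilde H\pol H_{i_{k+s}}\pol\cdots\pol H_{i_{k+1}}\pol H'$ --- exactly as in case (d) of the paper's proof (the $\rhd$ labels in Figure~\ref{fig:cycle} are evidently a misprint). This also undercuts your ``opposite senses'' argument, since in fact both strands are $\pol$-increasing; but that argument is superfluous anyway --- the lemma only asks for a cycle through $(H,P)$ and $(H',P')$, and a closed walk suffices. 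Your worry about whether the connecting edges $[(H',P'),(\overline H,\overline P)]$ and $[(H,P),(\widetilde H,\widetilde P)]$ lie in $\delta$ rather than merely in $\mathcal{G}(\A)$ is legitimate, but the paper does not address it either; in the intended applications $\delta$ is the full subgraph induced on the chosen pivot vertices, so the point is moot.
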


\proof
Since we get the submatrix (\ref{submatrixproblem}) it means that $e(H,c_{H'}^{P^{\prime}})\neq 0$ (a) or $\rightsquigarrow \neq 0$ (b) and $e(H',c_H^{P})\neq 0$ (c) or $\rightsquigarrow \neq 0$ (d), where the symbol $\rightsquigarrow \neq 0$ means that the entry became non zero after having performed previous rows operations on $Mat(\partial_2)$. Let us study these cases separately.
\begin{enumerate}
\item[(a)] corresponds to the existence of the edge $[(H',P'),(H,P),\pol],$ i.e. $H_{i_1}=\cdots = H_{i_k} = \overline{H} = H'$ in Figure \ref{fig:cycle};
\item[(b)] corresponds to existence of $\overline{H} \pol H$ with $e(\overline{H},c_{H'}^{P^\prime})\neq 0$ and a chain of operations with starting vertex $\overline{H}$ and ending vertex $H$ corresponding to the edges 
$$[(\overline{H},\overline{P}),(H_{i_k},P_{i_k}),\pol],\hdots,[(H_{i_2},P_{i_2}),(H_{i_1},P_{i_1}),\pol],[(H_{i_1},P_{i_1}),(H,P),\pol].$$
Note that we can have either $\overline{H} \pol H^\prime$ and the corresponding edge is $[(H',P'),(\overline{H},\overline{P}),\rhd],$ or $\overline{H} \rhd H^\prime$ and the corresponding edge is $[(H',P'),(\overline{H},\overline{P}),\pol];$
\item[(c)] corresponds to existence of the edge $[(H,P),(H',P'),\rhd],$ i.e. $H'= H_{i_{k+1}}=\hdots= H_{i_{k+s}}= \widetilde{H}$ in Figure \ref{fig:cycle};
\item[(d)] corresponds to existence $\widetilde{H} \pol H^\prime$ with $e(\widetilde{H},c_H^{P})\neq 0$ and a chain of operations with starting vertex $\widetilde{H}$ and ending vertex $H^\prime$ corresponding to the edges
$[(\widetilde{H},\widetilde{P}),(H_{i_{k+s}},P_{i_{k+s}}),\pol],\hdots,[(H_{i_{k+2}},P_{i_{k+2}}),(H_{i_{k+1}},P_{i_{k+1}}),\pol],[(H_{i_{k+1}},P_{i_{k+1}}),(H',P'),\pol].$

Note that in this case it is clear that $\widetilde{H} \pol H$ since $\widetilde{H} \pol H^\prime \pol H,$ and we have the edge $[(H,P),(\widetilde{H},\widetilde{P}),\rhd].$ 
\end{enumerate}
\endproof

\begin{remark}\label{rkamono} Condition in Lemma \ref{lemmacycle} is not sufficient in general to say that the matrix can not be triangularized. Indeed, it could exist another more suitable point $P'' \in H^\prime$ such that we have the submatrix 
$\bordermatrix{
& c_{H'}^{P''}&c_{H'}^{P'}&c_{H}^{P}\cr
H'& S(P'')&S(P') & *\cr
H & 0& * & S(P)\cr}$
and $c_{H^\prime}^{P''}$ can be simplified without changing $H,$ or it can happen that even if we perform $H-\varphi H^\prime$, still $e(H,c_H^P)$ is of the form $t^\beta(1-t).$ But anyway, Lemma \ref{lemmacycle} is sufficient to say that if no cycle appears in a suitable subgraph $\delta \subset \mathcal{G}(\A)$, then the submatrix $M^\prime$ whose rows $H$ and columns $c_H^P$ correspond to the vertices $(H,P)$ of $\delta$ can be triangularized. Then in particular, if we can find a suitable subgraph $\delta$ with as many vertices $(H,P)$ as the hyperplanes of $\A$ and such that for any two vertices $(H,P) \neq (H',P') \in \delta$, $H \neq H'$ and $P \neq P'$, then cycles as in Figure \ref{fig:cycle} of $\delta$ give informations on the monodromy of $\A.$ For example, if $\delta$ has no cycle it means that we have a submatrix $M^\prime$ with as many rows as $Mat(\partial_2)$  that can be reduced to $\diag(1-t),$ that is $\A$ is a-monodromic.
\end{remark}

In the following sections we will study the monodromy of the Milnor fiber of a special family of line arrangements, the so called sharp arrangements, studying the homology $H_1(M(\A); R_t)$ via suitable subgraphs of the graph $\mathcal{G}(\A)$.

\section{Sharp arrangements and their boundary map }\label{sec:sharp}

In the following of this paper we will deal with a special category of line arrangements, the so called \textit{sharp arrangements.}

\begin{definition}\label{sharppair}
A \textit{sharp pair} of a projective arrangement $\overline{\A}$ in $\PP^2_{\R}$ is a couple of lines $(\overline{H},\overline{H'})$ of $\overline{\A}$ such that all intersection points of lines in $\overline{\A}$ lie on $\overline{H} \cup \overline{H'}$ or are contained in one of the two regions of the projective plane $\PP^2_{\R}$ divided in two by $\overline{H}$ and $\overline{H'}.$
\end{definition}

\begin{definition}\label{sharparrangement}
We say that $\A$ in $\R^2$ is a \textit{sharp arrangement} if it is the deconing of an arrangement $\overline{\A}$ in $\PP_{\R}^2$ with respect to the hyperplane at infinity $H_\infty,$ where $H_\infty=\overline{H} \in \overline{\A}$ is a line in a sharp pair $(\overline{H},\overline{H'})$ of $\overline{\A}.$
\end{definition}
Let $\A$ be a sharp arrangement, $(\overline{H},\overline{H'})$ the sharp pair, $P_0= \overline{H} \cap \overline{H'}$ be the point at infinity, and $m(P_0)$ the multiplicity of $P_0$ in $\overline{\A}.$ Then keeping the notation (\ref{notationlocal}) we define $\overline{H}=H_\infty=H_{m(P_0)}^{P_0}$ and $$S(P_0)=\{H\in \A\,|\, \overline{H} \cap H_\infty = P_0\}.$$ We choose the pair $(V_0,V_1)$ as follows: $V_0$ lies in the unbounded chamber with wall the hyperplane $H'' \in S(P_0)$ such that the induced polar order in $S(P_0)$ is 
$$H''=H_{m(P_0)-1}^{P_0} \pol \cdots \pol H_{1}^{P_0}=H',$$ where $\overline{H'}$ is the second line in the sharp pair (see Figure \ref{fig1}).

\begin{figure}[htbp]
\centering
\begin{tikzpicture}[scale=1]

\draw (0,0) node [below right] {$H_{m(P_0)-1}^{P_0}$} --(0,11);
\draw (2,0) node [below right] {$H_2^{P_0}$} --(2,11);
\draw (3.5,0) node [below right] {$H_1^{P_0}$} --(3.5,11);

\draw (5,1) node [below right] {$H_2^{P_1}$} --(-1,9);
\draw (5,2) node [right] {$H_{m(P_1)}^{P_1}$} --(-2.5,7);

\draw (5,5.5) node [below right] {$H_2^{P_2}$} --(-2,5.5);

\draw (5,6) node [above right] {$H_{m(P_2)}^{P_2}$} --(-2.5,3.5);

\draw (4.5,9.5) node [below right] {$H_2^{P_m}$} --(-2.5,2.5);

\draw (4.5,10.5) node [above right] {$H_{m(P_m)}^{P_m}$} --(-0.5,0.5);

\draw (3.5,3) node {$\bullet$}; 
\draw (3.5,3) node[right]{$P_1$} ;

\draw (3.5,5.5) node {$\bullet$}; 
\draw (3.5,5.5) node[right]{$P_2$} ;

\draw (3.5,8.5) node {$\bullet$}; 
\draw (3.5,8.5) node[right]{$P_m$} ;

\draw (2,5) node {$\bullet$}; 
\draw (2,5) node[right]{$P'$} ;

\draw (0.2,5.2) node {$\bullet$}; 
\draw (0.2,5.2) node[right]{$P$} ;

\draw (1,0) node {$\hdots$};

\draw (5,1.7) node {$\vdots$};

\draw (5,5.8) node {$\vdots$};

\draw (4.5,10) node {$\vdots$};

\draw (3.8,7) node {$\vdots$};

\draw [-latex, red, dashed, rounded corners=80pt] (-0.5,0) node [below] {$V_0$} --(4,1)--(5,12) node [right] {$V_1$};

\end{tikzpicture}
\caption{Polar coordinates system for a sharp arrangement}
\label{fig1}
\end{figure}

\begin{remark}
The choice of notation $P_0$ for the point at infinity will be clearer in the next section. Indeed it turns out to be the most natural choice to agree with the properties on all the other points in the sharp line $H_1^{P_0}$. 
\end{remark}

Denote by $$\mathcal{P}:=\{P \in L_2(\A)\,|\, P \in H_{1}^{P_0}\}$$
the set of all affine points belonging to the sharp line $H_{1}^{P_0}$. As $H_{1}^{P_0}  \cup {P_0}=\overline{H'}$ in the sharp pair $(\overline{H},\overline{H'})$ of $\overline{\A},$ it is an easy remark that for any point $P\in \mathcal{P},$ $\bold{\widehat{U}}(P)=\emptyset,$ as any other point $P'\in L_2(\A)\backslash \mathcal{P}$ has to lie in the same side of $H_{m(P_0)-1}^{P_0}$ with respect to $H_{1}^{P_0}$ (see Figure \ref{fig1}). As in Figure \ref{fig1}, we will index points $P_1 \pol \cdots \pol P_m$ in $\mathcal{P}$ following the order induced by $(V_0,V_1)$ along $H_{1}^{P_0}.$

\subsection{On the line $H_1^{P_0}$}\label{inclusion}
The arrangement $\A_1=\{H_1^{P_0}\}$ is a subarrangement of $\A$ and it is a simple remark that the complex $\CC_*(\mathcal{S}(\A_1))$ is a subcomplex of $\CC_*(\mathcal{S}(\A)).$ Then the map \begin{equation} \label{eq:inclusion1}
\begin{split}
i\colon (\CC_*(\mathcal{S}(\A_{1})), \partial_*) &\rightarrow  (\CC_*(\mathcal{S}(\A)),\partial_*)
\end{split}
\end{equation}
is a well defined inclusion of algebraic complexes. It defines an exact sequence of algebraic complexes

\begin{equation}\label{eq:seq1}
0 \rightarrow (\mathcal{C}_*(\mathcal{S}(\A_{1})),\partial_*) \rightarrow(\mathcal{C}_*(\mathcal{S}(\A)),\partial_*) \rightarrow (F^1_*(\mathcal{S}(\A)),\partial_*)\rightarrow 0,
\end{equation}
where $F^1_*(\mathcal{S}(\A))$ is the quotient complex $\CC_*(\mathcal{S}(\A))/\CC_*(\mathcal{S}(\A_{1}))$ with the  induced boundary map. This exact sequence (\ref{eq:seq1}) gives rise to a long exact sequence in homology
\begin{center}
$\dots \rightarrow H_2((F^1_*(\mathcal{S}(\A)),\partial_*)) \rightarrow H_1(\mathcal{C}_*(\mathcal{S}(\A_1)),\partial_*) \rightarrow H_1(\mathcal{C}_*(\mathcal{S}(\A)),\partial_*)$\\
$\rightarrow H_1(F^1_*(\mathcal{S}(\A)),\partial_*)\rightarrow H_0(\mathcal{C}_*(\mathcal{S}(\A_1)),\partial_*) \rightarrow \dots$
\end{center}
with
\begin{center}
$H_1(\mathcal{C}_*(\mathcal{S}(\A_{1})),\partial_*)=0$ and  $H_0(\mathcal{C}_*(\mathcal{S}(\A_{1})),\partial_*) =\C[t,t^{-1}]/1-t,$
\end{center}

that is we have the same cyclic modules $\frac{R}{\varphi_d}$ in $H_1(\mathcal{C}_*(\mathcal{S}(\A)),\partial_*)$ and $H_1(F^1_*(\mathcal{S}(\A)),\partial_*)$. It is an easy remark that the inclusion (\ref{eq:inclusion1}) is equivalent to remove the row corresponding to $H_1^{P_0}$ from the second boundary matrix. 

\subsection{On the matrix $Mat(\partial_2)$ for sharp arrangements}\label{reduction}

Following section \ref{inclusion}, we denote by $M$ the submatrix of $Mat(\partial_2)$ obtained removing the row $H_1^{P_0}.$
We denote by $M_1$ the submatrix of $M$ with columns blocks $c^P,\,P\in \mathcal{P},$ and by $M_2$ the submatrix with columns $c^P,\,P\in L_2(\A) \backslash \mathcal{P}:$
\begin{equation}\label{Matdec}
M =( M_1 \mid M_2 ) \quad .
\end{equation}
We denote by $\partial_2(S(P))$ the sub-matrix of $M$ with columns $c^P$ and rows $H \in S(P).$ Similarly, we denote by $\partial_2(\bold{\widehat{U}}(P))$ the sub-matrix of $M$ with column block $c^P$ and rows $H\in \bold{\widehat{U}}(P).$

Performing operations  
$O^P_{i+1,i}(t^{-1}):=H_i^P- t^{-1}H_{i+1}^P,$ $1 \leq i \leq m(P)-1,$ we can diagonalize $\partial_2(S(P))$ as (see among others \cite{gaiffi2009morse}) 
\begin{center}
$D(P)=\left(\begin{array}{cccccc}
1-t^{m(P)}&0 &\hdots & 0\\
0&  & &0\\
\vdots &\ddots&&\vdots\\
& & 1-t^{m(P)}&0\\
 0& \hdots & 0&1-t\\
\end{array}\right)$
\end{center}
where the last row is obtained simplifying each entry by elementary columns operations with the last one. Since for any $P\in \mathcal{P},$ $\bold{\widehat{U}}(P)=\emptyset,$ the following lemma holds:

\begin{lemma}\label{lemdiag2}
$M_1$ is diagonalizable in $D(M_1)$ whose diagonal blocks are $D(P_1)$, $\hdots$, $D(P_m),$ and entries corresponding to rows $H_k^{P_0},\,1< k < m(P_0),$ are all zeros.
\end{lemma}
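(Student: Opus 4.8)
The plan is to analyze the block structure of $M_1$ column block by column block. Recall that $M_1$ consists of the columns $c^P$ for $P \in \mathcal{P} = \{P_1, \dots, P_m\}$, the affine points on the sharp line $H_1^{P_0}$, with the row $H_1^{P_0}$ already removed. The key geometric input, established just before the statement, is that $\bold{\widehat{U}}(P) = \emptyset$ for every $P \in \mathcal{P}$: any intersection point not on $H_1^{P_0}$ lies strictly on one side of $H_{m(P_0)-1}^{P_0}$ relative to $H_1^{P_0}$, so no hyperplane can be in the upper cone of $P$. Consequently the second addend of the boundary map \eqref{eq:boundary2}, which sums over $|F_i| \in \bold{\widehat{U}}(P)$ (equivalently over $|F_i| \in U(P)$ with $F_i \subset \Cone(P)$), vanishes identically for columns $c^P$ with $P \in \mathcal{P}$. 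Hence the only nonzero entries in the block $c^P$ occur in rows $H \in S(P)$, i.e. the block $c^P$ is supported exactly on the submatrix $\partial_2(S(P))$.

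Next I would argue that the supports of distinct blocks $c^{P}, c^{P'}$ for $P \neq P'$ in $\mathcal{P}$ are essentially disjoint in rows. A hyperplane $H$ lies in both $S(P)$ and $S(P')$ only if $H$ passes through two distinct points of $H_1^{P_0}$, which forces $H = H_1^{P_0}$; but that row has been removed from $M$. Therefore, after deleting the row $H_1^{P_0}$, the row supports of the blocks $c^{P_1}, \dots, c^{P_m}$ are pairwise disjoint, and $M_1$ is, up to reordering rows, block diagonal with blocks $\partial_2(S(P_1)), \dots, \partial_2(S(P_m))$ (each with its copy of the $H_1^{P_0}$-row suppressed). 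Applying to each block the row operations $O^P_{i+1,i}(t^{-1}) = H_i^P - t^{-1} H_{i+1}^P$ for $1 \le i \le m(P)-1$ — exactly the diagonalization recalled before the lemma, which yields $D(P)$ with diagonal $1-t^{m(P)}, \dots, 1-t^{m(P)}, 1-t$ after clearing the last row by column operations — we obtain the asserted diagonal form $D(M_1)$ with diagonal blocks $D(P_1), \dots, D(P_m)$. Note these operations only combine rows within a single $S(P)$, so they respect the block structure and do not interfere with one another.

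Finally I would check the last assertion, that in $D(M_1)$ the entries in the rows $H_k^{P_0}$ for $1 < k < m(P_0)$ are all zero. Such a hyperplane $H_k^{P_0} \in S(P_0)$ passes through the point at infinity $P_0$; being distinct from $H_1^{P_0}$, it meets $H_1^{P_0}$ only at $P_0$, which is not an affine point and hence not in $\mathcal{P}$. Thus $H_k^{P_0} \notin S(P)$ for every $P \in \mathcal{P}$, so the row $H_k^{P_0}$ is identically zero throughout $M_1$; since the diagonalizing operations are internal to the individual blocks $S(P)$ and never touch this row, it remains zero in $D(M_1)$. The main obstacle, such as it is, is the bookkeeping that the per-block row operations genuinely commute and that no row (other than the removed $H_1^{P_0}$) is shared between blocks — both of which follow from the observation that two distinct lines meet $H_1^{P_0}$ in at most one common point, namely only when one of them is $H_1^{P_0}$ itself.
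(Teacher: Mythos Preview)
Your proof is correct and follows essentially the same approach as the paper, which offers only the one-line justification ``since for any $P\in\mathcal{P}$, $\bold{\widehat{U}}(P)=\emptyset$'' together with the preceding paragraph on diagonalizing each $\partial_2(S(P))$. You have simply spelled out the details the paper leaves implicit: the vanishing of the upper-cone addend, the pairwise disjointness of the row supports of the blocks $c^{P_i}$ once $H_1^{P_0}$ is removed, and the fact that $H_k^{P_0}\notin S(P)$ for any $P\in\mathcal{P}$.
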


\begin{remark}\label{rkdiag} Since the monodromy has to divide $|\A|+1,$ if $P \in \mathcal{P}$ is a point such that $m(P)$ is coprime with $|\A|+1,$ then $1-t^{m(P)}$ cannot appear in the diagonal form of $M$, that is there exist columns operations on $M$ such that $D(P)=\diag(1-t),$ where 
$\diag(1-t)=\left(\begin{array}{cccc}
1-t &0 \hdots & 0\\

\vdots &\ddots&\vdots\\

 0& \hdots  &1-t\\
\end{array}\right).$\\
Denote by $\A_p =\{H \in (\A \setminus \{H_{1}^{P_0}\})  \mid \gcd(m(H \cap H_{1}^{P_0}), |\A|+1) \neq 1\}\cup \{H_k^{P_0}\}_{1 < k < m(P_0)}$ and 
$\A_{cp}=\A \setminus (\A_p \cup \{H_{1}^{P_0}\})$ two subsets corresponding to rows of $M$ and by $M_p$ and $M_{cp}$ the submatrices of $M$ corresponding, respectively, to rows $H\in \A_p$ an $H \in \A_{cp}$. Then the rows of $M$ can be re-ordered as follows: 
$$
M  ->  {{M_p}\choose{ M_{cp}} }.
$$
Let's now consider only the columns of $\left( \begin{array}{c} M_p \\ M_{cp} \end{array}\right)$ corresponding to the columns of the matrix $M_1$ in equation (\ref{Matdec}). After diagonalization process of $M_1$ in Lemma \ref{lemdiag2}, the matrix $D(M_1)$  can be re-ordered to keep the diagonal form as 
$$D(M_1) ->\left( \begin{array}{cc} D_p & 0 \\ 0 & 0 \\ 0 & D_{cp}  \end{array}\right),$$
where the first block corresponds to submtarix of rows $H\in \A_p \setminus \{H_k^{P_0}\}_{1 < k < m(P_0)}$, the zero block $( 0 \quad 0 )$ to rows in $\{H_k^{P_0}\}_{1 < k < m(P_0)}$ and the last block to rows $H \in \A_{cp}$. 
Then for any fixed row $H \in \A_p$ of the matrix $M_p$, the operations $O_{H,H^\prime}^{P,P^\prime}, row( H^\prime ) > row ( H )$ in the matrix $\left( \begin{array}{c} M_p \\ M_{cp} \end{array}\right)$ change, in the matrix $\left( \begin{array}{cc} D_p & 0 \\ 0 & 0 \\ 0 & D_{cp}  \end{array}\right)$, the column $c(H)$ of $\left(\begin{array}{c} D_p \\ 0  \\0 \end{array}\right)$ with non zero entry in the row $H$. Since this column is exchanged, in the diagonalization process, with the new reduced column $c^P_H$ with all entries $0$ except the one at row $H$, we can conclude that the diagonalization of rows $H \in \A_p$ of matrix $\left( \begin{array}{c} M_p \\ M_{cp} \end{array}\right)$ leaves the submatrix $( 0  \quad D_{cp} )$ unchanged. Hence, since all non zero entries of $D_{cp}$ are of the form $1-t^m$, $\gcd(m,|\A|+1) = 1$, it follows that, after diagonalization of the first rows $H \in \A_p$ of $\left( \begin{array}{c} M_p \\ M_{cp} \end{array}\right)$, independently of how rows $H \in \A_{cp}$ are changed by this half diagonalization, there exist columns and rows operations involving rows $H \in \A_{cp}$ to diagonalize $( 0  \quad D_{cp} )$ as $( 0  \quad \diag (1-t) )$. That is we can reduce to study only the submatrix $M_p$ of $M$ corresponding to rows $H\in \A_p$. Remark that if $H' \in \A_p$, then $ row( H^\prime ) > row ( H )$ if and only if $H' \rhd H$, that is in the matrix $M_p$ we perform operations $O_{H,H^\prime}^{P,P^\prime}, H^\prime \rhd H $.  
\end{remark}

Remark \ref{rkdiag} allows to immediately retrieve the following Libgober's result (see \cite{lib-mil}).

\begin{theorem}[]\label{thmLib}
Let $\A$ in $\R^2$ be a sharp arrangement with Milnor fiber $F$ and order $n=|\A|.$ If $\gcd(m(P),n+1)=1,$ for all $P \in \mathcal{P},$ then $\A$ is a-monodromic.
\end{theorem}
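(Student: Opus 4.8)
The plan is to run the hypothesis through the reduction machinery of Section~\ref{sec:sharp} and Remark~\ref{rkdiag}, and then to clear by hand the small piece of the boundary matrix that survives.

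First I would invoke the inclusion of algebraic complexes~\eqref{eq:inclusion1} of Subsection~\ref{inclusion}: it lets me delete the row $H_1^{P_0}$ and work with $M=(M_1\mid M_2)$ as in~\eqref{Matdec}, without losing any of the cyclic modules $\frac{R}{\varphi_d}$ occurring in $H_1$. By Lemma~\ref{lemdiag2}, $M_1$ is diagonalized into $D(M_1)$ with diagonal blocks $D(P_1),\dots,D(P_m)$ and with the rows $H_k^{P_0}$, $1<k<m(P_0)$, identically zero on $M_1$. Under the hypothesis $\gcd(m(P_i),n+1)=1$ for every $P_i\in\mathcal{P}$, the set $\A_p$ of Remark~\ref{rkdiag} is exactly $\{H_k^{P_0}\}_{1<k<m(P_0)}$ (the lines parallel to $H_1^{P_0}$, which are anyway in the second defining set), while $\A_{cp}$ consists of all the remaining lines, each meeting $H_1^{P_0}$ at an affine point of $\mathcal{P}$. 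Remark~\ref{rkdiag} then tells me that every block $D(P_i)$ becomes $\diag(1-t)$, that each row of $\A_{cp}$ acquires a pivot $1-t$, and that — however these eliminations mix up the $\A_{cp}$-rows — only operations internal to $\A_{cp}$ remain to bring the whole $\A_{cp}$-part to $\diag(1-t)$. Hence everything is reduced to triangularizing, with pivots $1-t$, the residual submatrix $M_p$ whose rows are the $m(P_0)-2$ affine lines $H_2^{P_0},\dots,H_{m(P_0)-1}^{P_0}$ through $P_0$; recall that these rows vanish on every column $c^P$ with $P\in\mathcal{P}$.

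For the last step I would use the homology graph $\mathcal{G}(\A)$. The lines $H_2^{P_0},\dots,H_{m(P_0)-1}^{P_0}$ are mutually parallel — they meet each other, and $H_1^{P_0}$, only at $P_0\in H_\infty$ — so no two of them pass through a common affine point; in particular $H_k^{P_0}\notin S(\last(H_{k'}^{P_0}))$ whenever $k\neq k'$. For each $k$, Notation~\ref{not:goodcolumn} provides a column $c_{H_k^{P_0}}^{\last(H_k^{P_0})}$ with entry $t^{\alpha}(1-t)$ in the row $H_k^{P_0}$, which I use to \emph{remove} $H_k^{P_0}$ in the sense of Definition~\ref{def:remove}, processing the rows in polar order. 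If the elimination got stuck, Lemma~\ref{lemmacycle} would produce a cycle as in Figure~\ref{fig:cycle} through two of these rows, hence an edge $[(H_k^{P_0},\last(H_k^{P_0})),(H_{k'}^{P_0},\last(H_{k'}^{P_0}))]$, forcing $H_{k'}^{P_0}\in S(\last(H_k^{P_0}))\cup\widehat{U}(\last(H_k^{P_0}))$ with $\widehat{U}$ the upper cone of~\eqref{eq:upper}; parallelism rules out the first alternative, and the position of $(V_0,V_1)$ relative to the pencil through $P_0$ rules out the second, because the line $V_{\last(H_k^{P_0})}$ issued from $V_0$ crosses every line of the pencil lying between it and $H_1^{P_0}$ before it reaches $\last(H_k^{P_0})$, so such a line is not even in $U(\last(H_k^{P_0}))$. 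Thus no obstructing cycle occurs, $M_p$ triangularizes with pivots $1-t$, and together with the $\A_{cp}$-part the whole $Mat(\partial_2)$ is triangularized with all pivots $1-t$; by the discussion in Remark~\ref{rk:removetrivial} this is precisely a-monodromicity of $\A$.

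The main obstacle is this final geometric bookkeeping: one has to pin down, in the sharp picture of Figure~\ref{fig1}, exactly which lines belong to the upper cones $\widehat{U}(\last(H_k^{P_0}))$ and check that the only ones that could produce the submatrix~\eqref{submatrixproblem} are lines of $\A_{cp}$ — whose pivots live in $M_1$ and are therefore left untouched, since the rows $H_k^{P_0}$ are zero there — and never another pencil line $H_{k'}^{P_0}$. Everything preceding that is a formal consequence of Lemma~\ref{lemdiag2} and Remark~\ref{rkdiag}.
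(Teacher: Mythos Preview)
Your overall strategy matches the paper's intent: Remark~\ref{rkdiag} reduces the problem to the submatrix $M_p$ whose rows are the pencil lines $H_k^{P_0}$, $1<k<m(P_0)$, and you then try to triangularize this piece using the columns $c_{H_k^{P_0}}^{\last(H_k^{P_0})}$. The paper only writes ``immediately'' and leaves this last step implicit; you are right to spell it out.

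However, your geometric claim ruling out the $\widehat{U}$-alternative is wrong. You assert that $V_{\last(H_k^{P_0})}$ crosses every pencil line lying between $H_k^{P_0}$ and $H_1^{P_0}$ \emph{before} reaching $\last(H_k^{P_0})$, hence such lines are not in $U(\last(H_k^{P_0}))$. The opposite is true: $V_0$ sits on the $H_{m(P_0)-1}^{P_0}$-side, so the ray $V_P$ crosses $H_{m(P_0)-1}^{P_0},\dots,H_{k+1}^{P_0}$ before $P$ and crosses $H_{k-1}^{P_0},\dots,H_1^{P_0}$ \emph{after} $P$. In fact, by the observation opening Subsection~\ref{simplification} (namely $H_{k'}^{P_0}\in\widehat{U}(P)$ iff some $H_h^{P_0}$ with $h>k'$ lies in $S(P)$), one has $H_{k'}^{P_0}\in\widehat{U}(\last(H_k^{P_0}))$ precisely when $k'<k$. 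So edges between pencil vertices do exist in $\mathcal{G}(\A)$.

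The fix is short and rescues your argument. The only edges among the vertices $(H_k^{P_0},\last(H_k^{P_0}))$ are
\[
[(H_k^{P_0},\last(H_k^{P_0})),(H_{k'}^{P_0},\last(H_{k'}^{P_0})),\pol],\qquad k'<k,
\]
since $k'<k$ forces $H_k^{P_0}\pol H_{k'}^{P_0}$. There is therefore no $\rhd$-edge among these vertices, hence no cycle of the shape in Figure~\ref{fig:cycle} (which requires the edge $[(H,P),(\widetilde{H},\widetilde{P}),\rhd]$). Equivalently, the square submatrix with rows $H_k^{P_0}$ and columns $c_{H_k^{P_0}}^{\last(H_k^{P_0})}$ is already triangular in the polar row order with diagonal entries $t^{\alpha}(1-t)$: the off-diagonal entry at row $H_{k'}^{P_0}$, column $c_{H_k^{P_0}}^{\last(H_k^{P_0})}$ is nonzero only for $k'<k$, which lies \emph{below} the diagonal, and the required clearing operations $H_{k'}^{P_0}-\varphi H_k^{P_0}$ never touch $e(H_{k'}^{P_0},c_{H_{k'}^{P_0}}^{\last(H_{k'}^{P_0})})$ because $e(H_k^{P_0},c_{H_{k'}^{P_0}}^{\last(H_{k'}^{P_0})})=0$ (this would need $k<k'$). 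This is essentially the content of the later Lemma~\ref{removeHP0last}, specialized to the present hypothesis.
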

With more non trivial computations on $M,$ it is also possible to retrieve the following Yoshinaga's result proved in \cite{Yoshi2,Yoshi3} and also in \cite{Wil}.

\begin{theorem}[]\label{thmYosh}
Let $\A$ in $\R^2$ be a sharp arrangement with Milnor fiber $F$ and order $n=|\A|.$ Assume that $\mathcal{P}$ contains exactly one point $P$ such that $\gcd\{m(P),n+1\} \neq 1.$ Then $\A$ is a-monodromic.
\end{theorem}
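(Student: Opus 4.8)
The plan is to continue the matrix analysis of Remark \ref{rkdiag}, reducing everything to the single ``bad'' point $P$ with $d := \gcd(m(P), n+1) \neq 1$. By that remark, after removing the row $H_1^{P_0}$ and diagonalizing the part of $M_1$ coming from the other points $P' \in \mathcal{P} \setminus \{P\}$ (all of which contribute only $1-t$ pivots, since their multiplicities are coprime to $n+1$), we are left to study the submatrix $M_p$ whose rows are $\A_p = S(P) \cup \{H_k^{P_0}\}_{1<k<m(P_0)}$ minus $H_1^{P_0}$, together with all the columns $c^{P''}$, $P'' \in L_2(\A) \setminus \mathcal{P}$, that share a row with $\A_p$. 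The columns $c^P$ diagonalize (via the operations $O_{i+1,i}^P(t^{-1})$) to $D(P)$ with pivots $1-t^{m(P)}$ and one pivot $1-t$; after the reordering, the genuinely non-trivial potential eigenvalue contributions are the $1-t^{m(P)}$ entries sitting on the rows $H_1^P, \dots, H_{m(P)-1}^P$ (the last row of $S(P)$ having been simplified to $1-t$).

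First I would set up the homology graph $\mathcal{G}(\A)$ restricted to $\A_p$ and exhibit an explicit subgraph $\delta$ that removes all of $H_2^P, \dots, H_{m(P)}^P$ and all of $H_k^{P_0}$, $1<k<m(P_0)$, \emph{one at a time} in polar order. For each line $H \in S(P)$ with $H \rhd H_1^P$, I would use the edges $[(H, P), (H', P'), \rhd]$ with $H' \in S(P) \cup \widehat{\bold U}(P)$ to clear the column $c_H^P$; because $\widehat{\bold U}(P) = \emptyset$ when $P \in \mathcal{P}$, the only lines $H'$ available through $P$ are the other members of $S(P)$, so the cleaning of $c_H^P$ stays inside $S(P)$. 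The point is that there is only one bad point, so the rows of $S(P)$ are the only rows carrying a $1-t^{m(P)}$ entry, and every column $c^{P''}$ with $P'' \notin \mathcal{P}$ meets $S(P)$ in at most one row (since two distinct lines through $P$ cannot both pass through a second common point $P''$). Consequently, when I perform the Gauss reduction in polar order, the submatrix (\ref{submatrixproblem}) that Lemma \ref{lemmacycle} warns about can only arise from a cycle in $\delta$, and such a cycle would force two vertices $(H, P)$, $(H', P)$ on the \emph{same} point $P$ with $H' \in S(P) \cup \widehat{\bold U}(P) = S(P)$ — which is consistent — but then by the structure described in Remark \ref{rkamono} one can route through a better column $c_{H'}^{P''}$, $P'' \notin \mathcal{P}$, since every such $H'$ lies on exactly one more point and that point's column has a clean $*$ entry available. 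So I would argue that the obstruction cycle never materializes, or more precisely that whenever it would, the alternative reduction of Remark \ref{rkamono} keeps the relevant diagonal entry of the form $t^\beta(1-t)$.

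Concretely, the cleanest route is: (i) use the diagonalization $D(P)$ to replace the block $c^P$, getting pivots $1-t^{m(P)}$ on rows $H_2^P, \dots$ and a single $1-t$ on the row corresponding to $H_1^P$; (ii) for each row $H_j^P$ with $2 \le j \le m(P)$, pick the second point $P^{(j)} := \mathsf{min}(H_j^P)$ or $\mathsf{last}(H_j^P)$ on that line — a point not in $\mathcal{P}$, where by Notation \ref{not:goodcolumn} there is an entry $e(H_j^P, c_{H_j^P}^{P^{(j)}}) = t^{\alpha}(1-t)$ — and use that entry as a pivot to clear the $1-t^{m(P)}$ contribution; (iii) since these second points $P^{(j)}$ for distinct $j$ are distinct (again two lines through $P$ meet elsewhere at most once, and the ordering $\pol$ makes the choice canonical), the pivots used are in distinct columns, so no conflict of the type (\ref{submatrixproblem}) occurs and the reduction proceeds to $\diag(1-t)$; (iv) the rows $H_k^{P_0}$, $1<k<m(P_0)$, which by Lemma \ref{lemdiag2} are identically zero in $M_1$, are handled by their entries in the $M_2$ blocks, each again of the form $t^\alpha(1-t)$ on $\mathsf{min}$ or $\mathsf{last}$ of the respective line. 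Assembling (i)--(iv), the whole of $M$ triangularizes with pivots $1-t$, so $H_1(M(\A); R_t) \cong \C^n$ and $\A$ is a-monodromic.

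The main obstacle is step (ii)--(iii): I need to be sure that clearing the $1-t^{m(P)}$ entry on row $H_j^P$ using a column $c^{P^{(j)}}$ does not reintroduce a $1-t^{m(P^{(j)})}$ term on some other row of $S(P)$, i.e. that the Gauss operation is ``local'' enough. This is exactly the situation controlled by Lemma \ref{lemmacycle} and Remark \ref{rkamono}: the danger is a cycle in the homology graph. Because there is \emph{only one} point of $\mathcal{P}$ with non-coprime multiplicity and $\widehat{\bold U}(P) = \emptyset$ for points on the sharp line, the possible cycles are severely constrained — any cycle would have to leave $S(P)$, travel through lines whose other intersection points all have multiplicity coprime to $n+1$ (hence contribute only $1-t$ and cannot ``feed back'' a $1-t^{m(P)}$), and return to $S(P)$; tracking the polynomial factors along such a path shows the returning entry is still divisible only by $1-t$ to the first power beyond the forced $1-t$ factor. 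Making this bookkeeping precise — essentially showing the product of the transition polynomials $\varphi'$ around any putative cycle in the restricted graph is a unit times a power of $t$ — is the technical heart of the argument, and it is where the hypothesis ``exactly one bad point'' is used in an essential way.
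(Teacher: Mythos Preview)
The paper does not actually prove Theorem~\ref{thmYosh}; it only states it as a result of Yoshinaga (proved in \cite{Yoshi2,Yoshi3} and \cite{Wil}) and remarks that ``with more non trivial computations on $M$'' it can be retrieved via the methods of the section. So there is no proof in the paper against which to compare your proposal.

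Your sketch is in the right spirit: use the block decomposition $M = (M_1 \mid M_2)$, diagonalize $M_1$ via Lemma~\ref{lemdiag2} and Remark~\ref{rkdiag}, isolate the single block $D(P)$ carrying the $1-t^{m(P)}$ entries, and then hunt for alternative pivots of the form $t^\alpha(1-t)$ for the rows of $S(P)$ among the columns of $M_2$. The observation that the auxiliary points $P^{(j)}$ on the lines $H_j^P$ are pairwise distinct (two lines through $P$ meet nowhere else) is correct and is exactly the kind of fact one needs.

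The genuine gap is the one you yourself flag at the end. Your claim that ``every column $c^{P''}$ with $P'' \notin \mathcal{P}$ meets $S(P)$ in at most one row'' is true only for the $S(P'')$ part of the column support; the $\bold{\widehat{U}}(P'')$ part can contain several lines of $S(P)$, so clearing the column $c^{P^{(j)}}$ may alter other rows $H_i^P$ with $i \neq j$, and the submatrix you extract in steps (ii)--(iii) is not a priori triangular. Your final paragraph gestures at a cycle argument via Lemma~\ref{lemmacycle} but does not carry it out: you would need to exhibit a concrete subgraph $\delta$ and verify that no cycle of the form in Figure~\ref{fig:cycle} occurs, or else track the transition polynomials around any such cycle. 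The paper's own machinery does help here---after the operations $O_{i+1,i}^{P'}(t^{-1})$, Theorem~\ref{thmeffect} collapses the $\bold{\widehat{U}}(P'')$ contributions to the much smaller set $\bold{\widehat{U}}_{Max}(P'')$, which meets each $S(P_i)$ in at most one line, and Lemma~\ref{removeHP0last} handles the $H_k^{P_0}$ rows---so a uniform choice such as $P^{(j)} = \last(H_j^P)$ together with an edge analysis in the style of Remark~\ref{rk1} would plausibly close the argument. But as written your (ii)--(iv) are an outline, not a proof, and the ``bookkeeping'' you defer is precisely where the ``non trivial computations'' the paper alludes to live.
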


In the end of this section we describe the effect of the diagonalization of $M_1$ on the right part $M_2$ of $M.$ Denote by $D(M_2)$ the new matrix obtained from $M_2$ after diagonalization of $M_1$.

\begin{theorem}\label{thmeffect}
Operations $O^P_{i+1,i}(t^{-1})=H_i^P- t^{-1}H_{i+1}^P,\,P\in \mathcal{P},$ change the  matrix $M_2$ as follows:
\begin{enumerate}
\item the zero entries in the row $H_i^{P}$ become entries of the form $t^{-1}S(P')$ in the columns corresponding to the block $c^{P'}$ if and only if $H_{i+1}^{P}=H_1^{P'}$;
\item submatrices $\partial_2(S(P')),\,P'\in L_2(\A)\backslash \mathcal{P},$ and rows $H_k^{P_0},\,1<k<m(P_0),$ are unchanged;
\item rows in $\partial_2(\bold{\widehat{U}}(P')),\,P'\in L_2(\A)\backslash \mathcal{P},$ become all zeros except the rows corresponding to $H \in \{\max_{\pol}S(P) \cap \bold{\widehat{U}}(P') \mid P\in \mathcal{P}\}$ and to $H_k^{P_0},1<k<m(P_0).$
\end{enumerate}
All other  entries are unchanged. 
\end{theorem}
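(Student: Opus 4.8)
The plan is to track, column block by column block, the effect of the single family of row operations $O^P_{i+1,i}(t^{-1}) = H_i^P - t^{-1}H_{i+1}^P$ for $P \in \mathcal{P}$, which is exactly the family that diagonalizes $M_1$ into $D(M_1)$ (Lemma \ref{lemdiag2}). The key observation is that these operations only involve rows $H \in \bigcup_{P\in\mathcal{P}} S(P)$, so any row not meeting $H_1^{P_0}$ at all is untouched; hence it suffices to examine what happens to the entries $e(H_i^P, c)$ and $e(H_{i+1}^P, c)$ for $c$ ranging over a block $c^{P'}$ with $P' \in L_2(\A)\setminus\mathcal{P}$. Since $O^P_{i+1,i}(t^{-1})$ replaces row $H_i^P$ by $H_i^P - t^{-1}H_{i+1}^P$, the new entry in position $(H_i^P, c)$ is $e(H_i^P,c) - t^{-1}e(H_{i+1}^P, c)$, and the rows $H_{i+1}^P$ themselves are changed only by the operation with index $i+1$ (i.e. $H_{i+1}^P - t^{-1}H_{i+2}^P$), which is again controlled the same way. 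So the whole argument reduces to a local computation at a fixed point $P'$ using formula (\ref{eq:boundary2}).

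First I would fix $P'\in L_2(\A)\setminus\mathcal{P}$ and analyze $e(H, c^{P'})$ for the three types of rows $H$: (a) $H\in S(P')$, (b) $H \in \bold{\widehat{U}}(P')$, (c) $H$ not incident to $P'$ and not in $\bold{\widehat{U}}(P')$. For type (c), by (\ref{eq:boundary2}) the entry is already $0$ and remains $0$ unless $H = H_i^P$ for some $P\in\mathcal{P}$ with $H_{i+1}^P$ being a row that *does* have a nonzero entry in $c^{P'}$; the only way $H_{i+1}^P$ has a nonzero entry in $c^{P'}$ while $H_i^P = H$ does not is the case where $H_{i+1}^P$ is the minimal line $H_1^{P'}$ of $S(P')$ — because the first summand of (\ref{eq:boundary2}) produces entries precisely on lines of $S(P')$ (plus lines in $\Cone(P')$), and the relevant non-incident predecessor along the sharp line meets $P'$-column only through this minimal line. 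This gives item (1), with the explicit coefficient $t^{-1}$ coming from the $t^{-1}$ in the operation and the form $S(P')$ from the column of $\partial_2(S(P'))$. For item (2): the rows of $\partial_2(S(P'))$ themselves are indexed by $H \in S(P') \subset L_2(\A)\setminus\mathcal{P}$-incident lines, none of which is of the form $H_i^P$ with $P\in\mathcal{P}$ unless $H=H_1^{P'}$, and even then the operation that would change $H_1^{P'}$ is $O^{P'}_{2,1}$, which has index in $\mathcal{P}$ only if $P'\in\mathcal{P}$ — contradiction; the rows $H_k^{P_0}$, $1<k<m(P_0)$, are by Lemma \ref{lemdiag2} already zero on $M_1$ and are not of the form $H_i^P$ for $P \in \mathcal{P}$ with $i < m(P)-1$ in a way that pairs them to a changing row, so they are unchanged too.

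The main work — and the step I expect to be the genuine obstacle — is item (3): determining which rows of $\partial_2(\bold{\widehat{U}}(P'))$ survive. Here one must use the second summand of (\ref{eq:boundary2}), the one over $|F_i|\in U(P')$ with $F_i\subset\Cone(P')$, together with the indexing of $S(P)$ and the geometry of sharpness (every point not on $H_1^{P_0}$ lies on one side of $H_{m(P_0)-1}^{P_0}$, see Figure \ref{fig1}). The plan is: a line $H\in\bold{\widehat{U}}(P')$ has, before reduction, a nonzero entry in $c^{P'}$ coming from this second sum; after performing $O^P_{i+1,i}(t^{-1})$ for all $P\in\mathcal{P}$, the entry in row $H$ becomes a telescoping combination $e(H_i^P,c^{P'}) - t^{-1}e(H_{i+1}^P,c^{P'})$ along the chain of lines through a fixed $P\in\mathcal{P}$, and one checks that this telescopes to zero unless $H$ is the $\pol$-maximal element of $S(P)$ that also lies in $\bold{\widehat{U}}(P')$ — because that is the only place the chain "stops" inside $\bold{\widehat{U}}(P')$, the next line $H_{m(P)}^{P}$ or beyond passing below $P'$. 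That accounts for the rows $H\in\{\max_{\pol}S(P)\cap\bold{\widehat{U}}(P') \mid P\in\mathcal{P}\}$. The rows $H_k^{P_0}$, $1<k<m(P_0)$, must be handled separately: they are not reached by the $\mathcal{P}$-operations as *source* rows (their index in their own block is not of the form appearing in $O^{P_0}_{i+1,i}$ with $P_0\notin\mathcal{P}$), so whatever entry they carry in $\partial_2(\bold{\widehat{U}}(P'))$ is untouched — hence they remain. Assembling the cases (a), (b), (c) and the explicit coefficients finishes the proof. The delicate point throughout is bookkeeping the "$t^{-1}$-shifted" products of the form $\prod t_{H_k}$ appearing in (\ref{eq:boundary2}) to confirm both the vanishing in the telescopes and the precise surviving coefficient $t^{-1}S(P')$ in item (1); this is where a careful but routine induction on $i$ along each sharp-line chain is required.
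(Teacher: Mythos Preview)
Your overall plan coincides with the paper's proof: fix $P'\in L_2(\A)\setminus\mathcal P$, split rows according to whether $H\in S(P')$, $H\in\bold{\widehat U}(P')$, or neither, and track the single subtraction $H_i^P-t^{-1}H_{i+1}^P$. For items (1) and (3) your outline is essentially what the paper does; in particular the key point in (3) is exactly that when both $H_i^P,H_{i+1}^P\in\bold{\widehat U}(P')$ one has $\alpha_{i+1}^P(P')=\alpha_i^P(P')+1$ while the $l$- and $u$-counts are equal, so $e(H_i^P,c^{P'})-t^{-1}e(H_{i+1}^P,c^{P'})=0$ in a single step (it is a direct cancellation, not really a telescope).

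There is, however, a genuine gap in your argument for item (2). You assert that a line $H\in S(P')$ is ``not of the form $H_i^P$ with $P\in\mathcal P$ unless $H=H_1^{P'}$''. This is false: every line of $\A$ other than the $H_k^{P_0}$ meets $H_1^{P_0}$ in some point $P\in\mathcal P$, so \emph{every} such $H\in S(P')$ is of the form $H_j^P$ for some $P\in\mathcal P$ and some $j$, and its row \emph{is} modified by $O^P_{j+1,j}$. What you must show instead is that this modification has no effect on the block $c^{P'}$, i.e.\ that $e(H_{j+1}^P,c^{P'})=0$. The correct reason is geometric: if $H_j^P\in S(P')$ then $H_{j+1}^P$ passes through $P$ at a strictly larger slope (in the polar ordering), hence strictly below $P'$, so $H_{j+1}^P\notin S(P')\cup\bold{\widehat U}(P')$ and its $c^{P'}$-entries vanish by formula (\ref{eq:boundary2}). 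This is the step the paper calls ``obvious'', but your proposed justification does not supply it. The part of (2) concerning rows $H_k^{P_0}$ is fine: those rows are not touched by any $O^P_{i+1,i}$ since $P_0\notin\mathcal P$.
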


\proof Using usual notation, $P$ will denote a point in $\mathcal{P}$ and $P'$ a point in $L_2(\A)\backslash \mathcal{P}.$
Recall that given a row $H$, the entries in the column block $c^{P'}$ are non zero if and only if $H \in S(P') \cup \bold{\widehat{U}}(P')$. Let us now prove 1., 2. and 3. separately.
\begin{enumerate}
\item Let's the entries of row $H_i^{P}$ and $H_{i+1}^{P}$ be respectively zero and not zero in the block $c^{P'}$, i.e. $H_i^P \notin S(P') \cup \bold{\widehat{U}}(P')$ and $H_{i+1}^P \in S(P') \cup \bold{\widehat{U}}(P')$. By simple geometric remark (see for instance lines $H_i^{P_m}, i>2$, in Figure \ref{fig1}) it follows immediately that if the line $H_i^P \notin S(P') \cup \bold{\widehat{U}}(P')$, then $H_{i+1}^P \notin \bold{\widehat{U}}(P')$. Hence $H_{i+1}^P \in S(P')$ and since if $H_{i+1}^P \in S(P')$ then $H_i^P \in U(P')$, the only possibility left for $H_{i+1}^P$ to be in $S(P')$ when $H_i^P \notin S(P') \cup \bold{\widehat{U}}(P')$ is that $H_i^P  \notin \Cone(P')$, that is if and only if $H_{i+1}^{P}=H_1^{P'}$.
\item Assume $H_i^P \in S(P').$ It is obvious that $H_{i+1}^P \notin S(P') \cup \bold{\widehat{U}}(P')$ and the entries $e(H_i^P,c^{P'})$ in the block $c^{P'}$ are unchanged by the operation $O^P_{i+1,i}(t^{-1}).$ Case of rows $H_k^{P_0},\,1<k<m(P_0),$ is trivial. 
\item  Assume $H_i^P \in \bold{\widehat{U}}(P')$. Then the entries $e(H_i^P,c^{P'})$ are changed by the operation $O^P_{i+1,i}(t^{-1})$ if and only if $e(H_{i+1}^P,c^{P'}) \neq 0$, that is if and only if one of the following two cases occurs:
\begin{enumerate}
\item $H_{i+1}^P \in \bold{\widehat{U}}(P')$ or
\item $H_{i+1}^P \in S(P').$
\end{enumerate}
In case (b) $H_i^P = \max_{\pol}S(P) \cap \bold{\widehat{U}}(P')$ and the entries $e(H_i^P,c^{P'})$ will be modified by the operation $O^P_{i+1,i}(t^{-1}).$ In particular, they (may) stay different from zero. Let us now study case (a). Then $H_i^P \neq \max_{\pol}S(P) \cap \bold{\widehat{U}}(P')$ and we can compute that $e(H_i^P-t^{-1}H_{i+1}^P,c^{P'})=0$ as follows. Let us denote by 
\begin{center}
$\alpha_i^P(P'):= |\{ H \pol H_i^P\,|\,H \in U(P')\}|,$\\
$l_i^P(C_j^{P'}):= |\{ H \pol H_i^P\,|\,H \in L(C_j^{P'}))\}|,$ and\\
 $u_i^P(C_j^{P'}):= |\{ H \pol H_i^P\,|\,H \in U(C_j^{P'}))\}|.$
\end{center}

With these notations we can express the $\bold{\widehat{U}}(P')$ addendum in formula (\ref{eq:boundary2}) of $\partial_2(l.e_{[C_j^{P'} \prec P']})$ as follows:

$$
\sum_{\substack{[C_{i-1}^{P} \prec F_i^{P}] \,s.t. \\H_i^{P} \in \bold{\widehat{U}}(P')}} t^{\alpha_i^{P}(P')} \cdot (1-t^{l_i^{P}(C_j^{P'})}) \cdot (t^{u_i^{P}(C_j^{P'})} - t^{m(P') - j })(l).e_{[C_{i-1}^{P} \prec F_i^{P}]} \quad.
$$

Given a critical $2-$cell $[C_j^{P'} \prec P'],$ as $H_{i}^P, H_{i+1}^P \in \bold{\widehat{U}}(P')$,  it is easily seen that $\alpha_{i+1}^P(P')= \alpha_i(P')+1,\,l_{i+1}^P(C_j^{P'})=l_i^P(C_j^{P'}),$ and $u_{i+1}^P(C_j^{P'})=u_i^P(C_j^{P'}).$ We deduce directly that the entries $e(H_i^P-t^{-1}H_{i+1}^P,c^{P'})$ are all zeros.
\end{enumerate}
\endproof
In view of the new non zero coefficients of type 1. in Theorem \ref{thmeffect} that appear when we diagonalize $M_1$, define for any $P'\in L_2(\A)\backslash \mathcal{P}$:
$$
\mathcal{N}(P'):=\{H_i^P,\,P\in \mathcal{P}\,|\,H_{i+1}^P = H_1^{P'}\}.
$$
Notice that $\mathcal{N}(P')$ is either the empty-set or contains just one line.

\subsection{On rows $H_k^{P_0}$ in the matrix $M$}\label{simplification}

It is an easy remark that lines of the form $H_k^{P_0}$ are never in $\Cone(P)$, unless $P$ belongs to one of them (see for instance points $P$ and $P'$ in Figure \ref{fig1}). On the other hand, if they belong to the cone of $P$, by our choice of polar coordinates, they also belong to the upper of $P$. More precisely, if $P \in L_2(\A)$, the following holds:
$$
H_k^{P_0} \in \bold{\widehat{U}}(P) \mbox{ if and only if } \exists~h > k \mbox{ s.t. }  H_h^{P_0} \in S(P) \quad.
$$ 
The first part of the following Lemma is then proved.

\begin{lemma}\label{up0}
Let  $P \in L_2(\A)$ be a point, then:
\begin{enumerate}
\item if $H_{k}^{P_0} \notin S(P)$, for all $k,\,1\leq k\leq m(P_0)-1$, then $e(H_{k}^{P_0},c^{P})=0,$ for all $k,\,1 \leq k \leq m(P_0)-1$;
\item if $H_{m(P_0)-1}^{P_0} \in S(P),$ then for any other line $H \in S(P)$, if $c^P_H$ is the column defined in Notation \ref{not:goodcolumn}, we have:
\begin{equation*}
\begin{split}
& e(H_{m(P_0)-1}^{P_0},c_H^P)=t^{m(P)-1}-1, \mbox{ and }\\
& e(H_{k}^{P_0},c_H^P)=t^{m(P_0)-k-2}(1-t)(1-t^{m(P)-1}),\, 1< k < m(P_0)-1.
\end{split}
\end{equation*}
\end{enumerate}

\end{lemma}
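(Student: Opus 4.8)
The statement has two parts. Part (1) has already been established by the geometric discussion immediately preceding the lemma: lines of the form $H_k^{P_0}$ lie in $\Cone(P)$ only when $P$ belongs to one of them, and in that case they lie in $\bold{\widehat{U}}(P)$ precisely when some $H_h^{P_0}$ with $h>k$ belongs to $S(P)$; so if no $H_k^{P_0}$ is in $S(P)$ then none is in $S(P)\cup\bold{\widehat{U}}(P)$, hence all entries $e(H_k^{P_0},c^P)$ vanish by the description of the support of $\partial_2$. I would restate this in one sentence and move on. The work is entirely in Part (2).

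For Part (2), the hypothesis $H_{m(P_0)-1}^{P_0}\in S(P)$ forces $P\in H_{m(P_0)-1}^{P_0}$, and by the observation above this means $H_k^{P_0}\in\bold{\widehat{U}}(P)$ for every $k<m(P_0)-1$, while $H_{m(P_0)-1}^{P_0}\in S(P)$ itself. So I want to read off two kinds of entries of the column block $c^P$ directly from formula (\ref{eq:boundary2}): the $S(P)$-contribution for the row $H_{m(P_0)-1}^{P_0}$, and the $\bold{\widehat{U}}(P)$-contribution for the rows $H_k^{P_0}$ with $1<k<m(P_0)-1$. The plan is: first fix which column of the block $c^P$ we are looking at, namely $c_H^P$ as in Notation \ref{not:goodcolumn}, where $H\in S(P)$ is an arbitrary line other than $H_{m(P_0)-1}^{P_0}$; recall that $c_H^P=c_1^P$ unless $H=H_1^P$, in which case it is $c_{m(P)-1}^P$. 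Then compute the relevant exponents. The key combinatorial input is our choice of polar coordinates: by construction $H_{m(P_0)-1}^{P_0}\pol\cdots\pol H_1^{P_0}$ are the smallest lines in the whole arrangement in the polar order, and they all pass through (or, for the affine ones, lie to the appropriate side of) the cone structure at $P$ in a completely controlled way. Concretely, for the $H_{m(P_0)-1}^{P_0}$ row I would use the $S(P)$-summand of (\ref{eq:boundary2}): since $H_{m(P_0)-1}^{P_0}$ is the $\pol$-minimum line through $P$, the products $\prod_{k<i,\,H_k\in U(P)}t_{H_k}$ and the bracketed difference collapse, after specializing all $t_H=t$, to $t^{0}\cdot(t^{0}-t^{\,m(P)-1})=1-t^{m(P)-1}$ up to sign, giving $e(H_{m(P_0)-1}^{P_0},c_H^P)=t^{m(P)-1}-1$; one has to check the sign and that $[C\to|F_i|)$ contributes nothing here, which follows because the column $c_H^P$ was chosen exactly so that the relevant entry has the clean form $t^\alpha(1-t)$ for the pivot line $H$, and the minimum line sits at the bottom of $S(P)$.

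For the rows $H_k^{P_0}$ with $1<k<m(P_0)-1$, which lie in $\bold{\widehat{U}}(P)$, I would use the $\bold{\widehat{U}}(P)$-summand, written out as in the proof of Theorem \ref{thmeffect} in terms of the counts $\alpha_i^P(P')$, $l_i^P(C_j^{P'})$, $u_i^P(C_j^{P'})$: each such row contributes a term $t^{\alpha}(1-t^{l})(t^{u}-t^{m(P')-j})$. Here the geometry is again rigid: among the lines $\pol H_k^{P_0}$ the ones in $U(P)$, in $L(C)$, in $U(C)$ are exactly $H_{m(P_0)-1}^{P_0},\dots,H_{k+1}^{P_0}$ distributed in a way dictated by the fact that these are the bottom lines of the order and $H$ is the pivot defining $C$; counting them gives $\alpha=?$, $l=1$ (the single line $H$, or rather $H_{m(P)}^{P}$—this is the delicate bookkeeping point), and a difference $t^{m(P_0)-k-2}\cdot$ something, while the outer difference contributes the factor $1-t^{m(P)-1}$. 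Assembling, $e(H_k^{P_0},c_H^P)=t^{m(P_0)-k-2}(1-t)(1-t^{m(P)-1})$, exactly as claimed; in particular, when $k=m(P_0)-1$ the exponent $m(P_0)-k-2=-1$ would be illegitimate, which is consistent with that row being handled separately in the first bullet.

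**Main obstacle.** The genuine difficulty is purely bookkeeping: correctly identifying, for each row $H_k^{P_0}$, the exact sets $U(P)$, $L(C_H^P)$, $U(C_H^P)$ restricted to lines $\pol H_k^{P_0}$, and hence the exponents $\alpha_k$, $l_k$, $u_k$ in the $\bold{\widehat{U}}(P)$-term of (\ref{eq:boundary2}). This requires a careful look at Figure \ref{fig1}: one must use that $H_{m(P_0)-1}^{P_0},\dots,H_1^{P_0}$ are the polar-minimal lines, that $H=H_{m(P)}^{P}$ is the polar-maximal line through $P$ (when $c_H^P=c_1^P$) so that $U(C)$ and $L(C)$ split $S(P)$ in the expected way, and that none of the $H_k^{P_0}$ meets $P'\ne P$ inside $\Cone(P')$ with the wrong orientation. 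Once the three counts are pinned down the rest is an immediate specialization $t_H\mapsto t$ and factorization. I would therefore present the proof as: (i) recall Part (1) from the preceding paragraph; (ii) reduce to computing $e(-,c_H^P)$ for a fixed pivot $H\in S(P)\setminus\{H_{m(P_0)-1}^{P_0}\}$; (iii) the $S(P)$-row $H_{m(P_0)-1}^{P_0}$ via the first summand of (\ref{eq:boundary2}); (iv) the $\bold{\widehat{U}}(P)$-rows $H_k^{P_0}$ via the second summand, doing the count of $\alpha_k,l_k,u_k$ explicitly with reference to Figure \ref{fig1}; (v) specialize and factor.
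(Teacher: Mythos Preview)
Your approach is exactly the paper's: Part (1) is the content of the paragraph preceding the lemma, and Part (2) is a direct evaluation of the two summands in formula (\ref{eq:boundary2}) at the column $c_1^P$, using that the $H_k^{P_0}$ are the globally smallest lines. Two concrete bookkeeping errors in your write-up need fixing, though.

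First, you have the two terms in the bracket swapped for the $S(P)$-row. Since $H_{m(P_0)-1}^{P_0}=H_1^P$ lies in $L(C_1^P)$, case (ii) applies and $[C_1^P\to H_1^P)=\{H_k\in S(P):k<\text{index of }H_1^P\}\cup U(C_1^P)=\emptyset\cup\{H_2^P,\dots,H_{m(P)}^P\}$, which has $m(P)-1$ elements; the second product $\prod_{k<i,\,H_k\in S(P)}t$ is empty. So the bracket is $t^{m(P)-1}-t^{0}$, not $t^0-t^{m(P)-1}$; your ``up to sign'' hedge covers the answer but the reasoning (``$[C\to|F_i|)$ contributes nothing'') is backwards.

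Second, your identification of $L(C_1^P)$ is wrong: $L(C_1^P)=\{H_1^P\}=\{H_{m(P_0)-1}^{P_0}\}$, not $\{H\}$ or $\{H_{m(P)}^P\}$. That is why $l_k=1$ (the single line $H_{m(P_0)-1}^{P_0}$ is the only element of $L(C_1^P)$ smaller than $H_k^{P_0}$). Relatedly, you should note up front that since $H_{m(P_0)-1}^{P_0}=H_1^P$, the phrase ``any other line $H\in S(P)$'' forces $H\ne H_1^P$, so $c_H^P=c_1^P$ always; the case $H=H_1^P$ is not needed here (it is the content of the next lemma).
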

\proof 
Point 1. is trivial. Let's prove point 2. \\
Assume $H_{m(P_0)-1}^{P_0}\in S(P)$. Then $H_{k}^{P_0}\in \bold{\widehat{U}}(P)$, for all $k < m(P_0)-1$,  and:
 \begin{enumerate}
\item[i)] $H_{m(P_0)-1}^{P_0}= H_1^P$, that is any other line $H \in S(P)$ is of the form $H=H_i^P$, $i >1$ and,  by Notation \ref{not:goodcolumn}, $c_H^P=c_1^p$;
\item[ii)] the entry $e(H_{m(P_0)-1}^{P_0},c_1^P)$ will be in the $S(P)$ addendum of formula  (\ref{eq:boundary2});
\item[ii)] the entry $e(H_{k}^{P_0},c_1^P)$ will be in the $\bold{\widehat{U}}(P)$ addendum of formula  (\ref{eq:boundary2})
\end{enumerate}
and we get:
\begin{equation}\label{SP}
e(H_{m(P_0)-1}^{P_0},c_1^P)= \bigg( \displaystyle{\prod_{\substack{H' < H_{m(P_0)-1}^{P_0}~s.t.\\ H'\in U(P)}}} t \bigg) \bigg[\prod_{\substack{H'~s.t.\\ H' \in[C_1^P\rightarrow H_{m(P_0)-1}^{P_0})}}t -\prod_{\substack{H' < H_{m(P_0)-1}^{P_0}~s.t.\\ H' \in S(P)}}t\bigg] 
\end{equation}
\begin{equation}\label{UCP}
e(H_k^{P_0},c_1^{P}) = \bigg(\displaystyle{\prod_{\substack{H' < H_{k}^{P_0}~s.t.\\H'\in U(P)}}}t \bigg)\bigg(1-\prod_{\substack{H' < H_{k}^{P_0}~s.t.\\ H' \in L(C_1^P)}}t \bigg)\bigg(\prod_{\substack{H' < H_{k}^{P_0}~s.t.\\H'\in U(C_1^P)}}t -\prod_{\substack{H'~s.t.\\H'\in U(C_1^P)}}t\bigg)
\end{equation} 
Since $H_{m(P_0)-1}^{P_0}$ is the smallest line of $\A$ and $[C_1^P\rightarrow H_{m(P_0)-1}^{P_0}) = \{H_2^P,\hdots,H_{m(P)}^P\},$ equation (\ref{SP}) becomes $e(H_{m(P_0)-1}^{P_0},c_1^P)= t^{m(P)-1}-1.$\\
Since  $\{H' < H_{k}^{P_0}~s.t.~H'\in U(P)\} = \{H_{m(P_0)-1}^{P_0} < H' < H_{k}^{P_0}\}$, its cardinality is $m(P_0)-k-2.$ Moreover $|\{H' < H_{k}^{P_0}~s.t.~H' \in L(C_1^P)\}|=1$ as $L(C_1^P)=\{H_{m(P_0)-1}^{P_0}\}$. Finally, any line in $U(C_1^P)=\{H_2^P,\hdots,H_{m(P)}^P\}$ is bigger than $H_k^{P_0},$ and equation (\ref{UCP}) becomes $e(H_{k}^{P_0},c_1^P)=t^{m(P_0)-k-2}(1-t)(1-t^{m(P)-1})$.
\endproof

\begin{lemma}\label{up0h1}
Let $H=H_{m(P_0)-1}^{P_0} $ and $P\in H.$ We have that:
\begin{equation*}
\begin{split}
& e(H,c_H^P)=t-1, \mbox{ and }\\
& e(H_{k}^{P_0},c_H^P)=t^{m(P_0)-k-2}(1-t)^2, 1< k < m(P_0)-1.
\end{split}
\end{equation*}
\end{lemma}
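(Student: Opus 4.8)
The plan is to compute the two entries straight from the boundary formula (\ref{eq:boundary2}), following exactly the scheme of the proof of Lemma \ref{up0}; the one substantive change is that here $H=H_{m(P_0)-1}^{P_0}=H_1^P$, so by Notation \ref{not:goodcolumn} the relevant column is $c_H^P=c_{m(P)-1}^P$ rather than $c_1^P$. I would first record the combinatorial data attached to the critical $2$-cell $[C_{m(P)-1}^P\prec P]$: the chamber $C_{m(P)-1}^P$ has as walls the facets carried by $H_{m(P)-1}^P$ and $H_{m(P)}^P$, so $H_C=H_{m(P)-1}^P$, $H^C=H_{m(P)}^P$, and therefore $U(C)=\{H_{m(P)}^P\}$ while $L(C)=\{H_1^P,\dots,H_{m(P)-1}^P\}$. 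The cone $\Cone(P)$ is still delimited by $H_1^P=H_{m(P_0)-1}^{P_0}$ and $H_{m(P)}^P$, and every $H_k^{P_0}$ with $1<k<m(P_0)-1$ lies in $\bold{\widehat{U}}(P)$, exactly as recorded in Section \ref{simplification}.

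For the first entry, $e(H,c_H^P)=e(H_1^P,c_{m(P)-1}^P)$: since $H_{m(P_0)-1}^{P_0}$ is the minimal line of $\A$ in the polar order, every product in the $S(P)$-summand of (\ref{eq:boundary2}) indexed by lines strictly below $H_1^P$ is empty. As $H_1^P\in L(C)$, the set $[C\to|F_i|)$ is the one of case (ii), which here collapses to $U(C)=\{H_{m(P)}^P\}$; hence the $S(P)$-contribution is $t-1$. Finally $H_1^P\notin U(P)$ (it passes through $P$), so the $\bold{\widehat{U}}(P)$-summand contributes nothing, and $e(H,c_H^P)=t-1$.

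For the entries $e(H_k^{P_0},c_H^P)$ with $1<k<m(P_0)-1$, only the $\bold{\widehat{U}}(P)$-summand is relevant. Here I would reuse verbatim the cardinality counts already established in the proof of Lemma \ref{up0}: the lines $\pol H_k^{P_0}$ lying in $U(P)$ are precisely $H_{k+1}^{P_0},\dots,H_{m(P_0)-2}^{P_0}$, so the first factor is $t^{m(P_0)-k-2}$; the only line $\pol H_k^{P_0}$ in $L(C)$ is $H_{m(P_0)-1}^{P_0}$, so the middle factor is $1-t$; and no line $\pol H_k^{P_0}$ lies in $U(C)=\{H_{m(P)}^P\}$ because each of $H_2^P,\dots,H_{m(P)}^P$ is $\rhd H_k^{P_0}$, so the last factor is $1-t$. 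Multiplying the three gives $e(H_k^{P_0},c_H^P)=t^{m(P_0)-k-2}(1-t)^2$, as claimed.

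I expect the only real point of care to be the case distinction (i)/(ii) for $[C\to|F_i|)$ in the first computation — one must invoke (ii), since $H_1^P$ is a wall of $L(C)$ and not of $U(C)$ — together with the bookkeeping that certain index ranges are empty because $H_{m(P_0)-1}^{P_0}$ is the global minimum of the polar order and $H_{m(P)}^P,\dots,H_2^P$ all sit above $H_k^{P_0}$. No geometric input beyond what already underlies Lemma \ref{up0} is required; the computation differs from that lemma only through the different $U(C)$ and $L(C)$ of the chamber $C_{m(P)-1}^P$, which is exactly what turns the factor $1-t^{m(P)-1}$ appearing there into the factor $1-t$ here.
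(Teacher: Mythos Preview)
Your proof is correct and follows essentially the same approach as the paper's own proof: both identify $c_H^P=c_{m(P)-1}^P$, compute $U(C_{m(P)-1}^P)=\{H_{m(P)}^P\}$ and $L(C_{m(P)-1}^P)=\{H_1^P,\dots,H_{m(P)-1}^P\}$, then plug directly into the $S(P)$- and $\bold{\widehat{U}}(P)$-summands of (\ref{eq:boundary2}) reusing the cardinality counts from Lemma \ref{up0}. Your write-up is in fact slightly more explicit about why case (ii) of $[C\to|F_i|)$ applies and why the $\bold{\widehat{U}}(P)$-summand vanishes for $H_1^P$, but the argument is the same.
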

\proof
Analogously to Proof of Lemma \ref{up0},  $H=H_{m(P_0)-1}^{P_0}=H_1^P$. Then $c_H^P= c_{m(P)-1}^P$ and since $H_{m(P_0)-1}^{P_0}$ is the smallest line of $\A$ and $[C_{m(P)-1}^P\rightarrow H_{m(P_0)-1}^{P_0}) = \{H_{m(P)}^P\},$ equation (\ref{SP}) with $c_{m(P)-1}^P$ instead of $c_{1}^P$ becomes $e(H_{m(P_0)-1}^{P_0},c_{m(P)-1}^P)= t-1.$\\
About second equality, all lines in  $L(C_{m(P)-1}^P)=\{H_1^P,\hdots,H_{m(P)-1}^P\}$ but $H_1^P$ are bigger than $H^{P_0}_k$, $k < m(P_0)-1$, that is  $|\{H' < H_{k}^{P_0}~s.t.~H' \in L(C_{m(P)-1}^P)\}|=1$. Finally, as $U(C_{m(P)-1}^P)=\{H_{m(P)}^P\},\, H_{m(P)}^P > H_{k}^{P_0}$, equation (\ref{UCP}) with $c_{m(P)-1}^P$ instead of $c_{1}^P$ becomes $e(H_{k}^{P_0},c_{m(P)-1}^P)=t^{m(P_0)-k-2}(1-t)^2.$

\endproof

\begin{remark}\label{eliminationP0}The columns $c_H^P$ are key columns in the diagonalization process. Then we are interested in  studying and simplifying the non zero entries on those columns. 
Clearly by Lemmas \ref{up0} and \ref{up0h1} performing operations 
\begin{equation}\label{op:P0}
O^{P_0}_{m(P_0)-1,k}(t^{m(P_0)-k-2}(t-1))=H_k^{P_0} + t^{m(P_0)-k-2}(1-t)H_{m(P_0)-1}^{P_0}
\end{equation} 
we have that for any point P and line $H \in S(P), H \neq H_k^{P_0},1<k<m(P_0)-1$, the entries $e(H_k^{P_0},c_H^P),1<k<m(P_0)-1,$ become all zero.
All the other columns of the matrix are unchanged by these operations, since they have zeros entries on the row $H_{m(P_0)-1}^{P_0}.$
\end{remark}

Theorem \ref{thmeffect} and the previous discussion motivate the following definition.

\begin{definition}\label{def:Uppmax}
Let $P\in L_2(\A) \backslash \mathcal{P}$ be an affine point. We define the upper cone maximal of $P$ as the set
$$
\bold{\widehat{U}}_{Max}(P):=\{\max_{\pol} \bold{\widehat{U}}(P) \cap S(P_i) \mid P_i \in \mathcal{P}\} \cup \mathcal{H}_0(P),
$$
where $\max_{\pol} \bold{\widehat{U}}(P) \cap S(P_i)$ is the maximal hyperplane with respect to the order $\pol$ in each $S(P_i)$ which also belongs to the upper cone $\bold{\widehat{U}}(P)$, and $\mathcal{H}_0(P)$ is the following subset of $S(P_0)\cap \bold{\widehat{U}}(P):$
$$\mathcal{H}_0(P):= \left \{
\begin{split} &\{H_k^{P_0}\}_{1 <k < m(P_0)-1} \cap  \bold{\widehat{U}}(P), & \mbox{ if \,} H_{m(P_0)-1}^{P_0} \notin S(P)\\
&\emptyset  & \mbox{ if \,} H_{m(P_0)-1}^{P_0} \in S(P). 
\end{split}
\right .
$$
\end{definition}

\begin{remark}\label{simplification1} Since $H_k^{P_0} \in \bold{\widehat{U}}(P) \mbox{ if and only if } \exists~h > k \mbox{ s.t. }  H_h^{P_0} \in S(P)$, the condition $\mathcal{H}_0(P) \neq \emptyset$ is equivalent to the existence of an $h \neq m(P_0)-1$ such that $H_h^{P_0} \in S(P)$. 
Hence after we performed operation in equation (\ref{op:P0}), the only non zero entries of rows $H_k^{P_0}$ are in the column blocks $c^P$ such that $H_k^{P_0} \in S(P)\cup \bold{\widehat{U}}_{Max}(P).$ That is, any row operation $H-\varphi H_k^{P_0}$ leaves the entry $e(H,c_H^P)$ unchanged for any point $P$ such that  
$H_k^{P_0} \notin S(P)\cup \bold{\widehat{U}}_{Max}(P).$
\end{remark}
We will use this Remark \ref{simplification1} in section \ref{subgraph1} (respectively \ref{subgraph2}) in order to simplify the rows $H_k^{P_0},\,1<k<m(P_0)-1$ (respectively $H_k^{P_0},\,2<k<m(P_0)-1$).

\section{Graph and monodromy of the Milnor fiber}\label{sec:mains}

In this section, we study special graphs obtained from $\mathcal{G}(\A)$ in order to study the monodromy of $\A$. In Section \ref{sec:sharp}  we essentially proved the following facts:
\begin{enumerate}
\item[i)] the line $H_1^{P_0}$ can be removed (see section \ref{inclusion}); 
\item[ii)] performing operations $O^P_{i+1,i}(t^{-1})$, we diagonalize the matrix $M_1$ getting entries $e(H_i^{P},c_{i-1}^P)=1-t^{m(P)}$, $1 < i < m(P)$, $e(H_{m(P)}^{P},c_{m(P)-1}^P)=1-t$, for all $P \in \mathcal{P}$ (see section \ref{reduction});
\item[iii)]after performing operations $O^P_{i+1,i}(t^{-1})$ and $O^{P_0}_{m(P_0)-1,k}(t^{m(P_0)-k-2}(t-1)),$ we have that
\begin{center}
$e(H,c^{P'})\neq 0 \Leftrightarrow H \in S(P')\cup \bold{\widehat{U}}_{Max}(P') \cup \mathcal{N}(P')$
\end{center}
(see Theorem \ref{thmeffect} and Remark \ref{eliminationP0}).
\end{enumerate}
Since the only possible monodromies have to divide both $|\A|+1$ (by the equivariant decomposition in equation (\ref{eq:decomposition}) ) and $m(P_0)$ ( see \cite[3.21 (ii)]{Yoshi2}), it follows from  i) and ii) that the hyperplanes
\begin{equation}\label{eq:remove}
\A_r=\{H_i^P \in \A | P \in \mathcal{P} \mbox{ and } \gcd(m(P),|\A|+1)=1 \mbox{ or } \gcd(m(P),m(P_0))=1 \mbox{ or } i=m(P) \} 
\end{equation} 
can all be removed in the sense of Definition \ref{def:remove} by the same argument than in Remark \ref{rkdiag}. Notice that $ i=m(P)$ includes the case of double points, i.e. $m(P)=2$. \\
Then in order to study the monodromy of $\A$ we can reduce to study the graph $\mathcal{G}_{\A'}$ defined as follows: 
\begin{enumerate}
\item vertices of $\mathcal{G}_{\A'}$ are vertices $(H,P)$ of $\mathcal{G}(\A)$ such that $H \in \A'=\A \setminus \A_r$ and $P \in L_2(\A) \setminus \mathcal{P}$;
\item two vertices $(H,P)$ and $(H',P^\prime)$ are connected by an edge from $(H,P)$ to $(H',P^\prime)$ if and only if $H^\prime \in S(P)\cup \bold{\widehat{U}}_{Max}(P) \cup \mathcal{N}(P)$.
\end{enumerate} 
Let us remark that $\mathcal{G}_{\A'}$ has lesser vertices than $\mathcal{G}(\A)$ and all edges $[(H,P),(H',P')]$ for which $H' \in \bold{\widehat{U}}(P) \setminus \bold{\widehat{U}}_{Max}(P)$ disappear. Remark that $\mathcal{G}_{\A'}$ is not properly a subgraph of $\mathcal{G}(\A)$ since new edges $[(H,P),(H',P')]$ corresponding to $H'\in \mathcal{N}(P)$ appear. Graph $\mathcal{G}_{\A'}$ is more informative than $\mathcal{G}(\A)$ but still too big. It can be highly reduced as we will show in the two next subsections. 

\begin{notation}\label{not:arrnozero} Denote by
$$
\A_0=\A' \setminus \{H^{P_0}_k\}_{1< k <m(P_0)-1}
$$
and by
$$
\A'_0=\A' \setminus \{H^{P_0}_k\}_{2< k <m(P_0)-1} \quad .
$$ 
\end{notation}

Note that $\A_0$ is obtained from $\A'_0$ simply removing the hyperplane $H_2^{P_0}$. 

\subsection{The subgraph $\mathcal{G}_{\A_0,\last}$ of $\mathcal{G}_{\A'}$}\label{subgraph1}

Recall that given a line $H$ we call $\last(H)\in L_2(\A)\backslash \mathcal{P}$ (resp. $\mini(H) \in L_2(\A)\backslash \mathcal{P}$) the last point (resp. the second point) in $H$ with respect to the order induced by the polar order $\pol$ of $\A.$

\begin{remark}\label{lastmininfty}
By basic geometric remarks we have:
\begin{enumerate}
\item if $2 < k < m(P_0)-1,$ then $\last(H_k^{P_0}) \neq \last(H)\,\text{and}\,\mini(H),$ for any $H \neq H_k^{P_0};$
\item if $\last(H_2^{P_0})= \last(H),$ for a certain $H \neq H_2^{P_0},$ then $m(P_0)=3$ and $H=H_2^{P_i},$ for a certain $P_i\in \mathcal{P}.$ 
\item if $\mini({H_2}^{P_0}) = \last(H),$ for a certain $H \neq H_2^{P_0},$ then $m(P_0)=3;$ 
\item if $\mini(H_2^{P_0})=\mini(H_2^{P_i}),$ for a certain $P_i\in \mathcal{P},$ then $m(P_i)=2;$
\item if $\last(H_{m(P_0)-1}^{P_0})=\last(H),$ for a certain $H\neq H_{m(P_0)-1}^{P_0},$ then $H=H_2^{P_i},$ for a certain $P_i\in \mathcal{P}.$
\end{enumerate} 
\end{remark}

The following remark studies the contribution of the lines $H_k^{P_0}$ in the non zero entries of the matrix $D(M_2)$ related to columns $c^{\last(H_j^{P_i})}$ or $c^{\mini(H_2^{P_i})},\,P_i\in \mathcal{P}.$
\begin{remark}\label{rksimplast}The following facts hold:
\begin{enumerate}
\item for any $H \in \A$, if $H_k^{P_0} \in S(\last(H))$, then $k=m(P_0)-1$ and 
\begin{equation*}
H_{m(P_0)-1}^{P_0} \in S(\last(H)) \Leftrightarrow H_k^{P_0}\in \bold{\widehat{U}}(\last(H)),1 \leq k < m(P_0)-1,
\end{equation*}
that is $H_k^{P_0} \notin S(\last(H_j^{P_i})) \cup \bold{\widehat{U}}_{Max}(\last(H_j^{P_i})) \cup \mathcal{N}(\last(H_j^{P_i})),$ for any $j$ and $P_i\in \mathcal{P}$;
\item let $H_2^{P_i}\in \A,\,P_i\in \mathcal{P}.$ Since $\mini(H_2^{P_i})$ is smaller or equal than $H_2^{P_i} \cap H_2^{P_0},$ for any $2 < k < m(P_0)$ we have that 
\begin{equation*}
H_k^{P_0}\notin S(\mini(H)) \cup \bold{\widehat{U}}_{Max}(\mini(H))\cup \mathcal{N}(\mini(H)) .
\end{equation*}
\end{enumerate}

\end{remark}

\begin{lemma}\label{removeHP0last}Rows $\{H^{P_0}_k\}_{1< k <m(P_0)-1}$ can be removed in the sense of Definition \ref{def:remove} without changing columns $c_H^{\last(H)}$, $H \in \A'$.
\end{lemma}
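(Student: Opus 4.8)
The plan is to remove each row $H_k^{P_0}$, $1 < k < m(P_0)-1$, one at a time, following the polar order, by means of the single operation recorded in equation~(\ref{op:P0}), namely $O^{P_0}_{m(P_0)-1,k}(t^{m(P_0)-k-2}(t-1))$, and then to argue that this operation affects neither the previously cleared rows $H_{k'}^{P_0}$ with $k'<k$ nor the ``key'' columns $c_H^{\last(H)}$ for $H\in\A'$. First I would invoke Lemmas~\ref{up0} and~\ref{up0h1}: for a point $P\in L_2(\A)$ with $H_{m(P_0)-1}^{P_0}\in S(P)$, the entry $e(H_k^{P_0},c_H^P)$ is a known multiple of $e(H_{m(P_0)-1}^{P_0},c_H^P)$ (the multiple being exactly $t^{m(P_0)-k-2}(1-t)$ when $H\neq H_{m(P_0)-1}^{P_0}$, and with the analogous shift for $H=H_{m(P_0)-1}^{P_0}$ via $c_{m(P)-1}^P$), so a single row operation against $H_{m(P_0)-1}^{P_0}$ zeroes out $e(H_k^{P_0},c_H^P)$ for all such $P$ and all lines $H\in S(P)$ with $H\neq H_k^{P_0}$; meanwhile, by Lemma~\ref{up0}(1), if $H_{m(P_0)-1}^{P_0}\notin S(P)$ then $e(H_k^{P_0},c^P)=0$ already. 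Thus after this operation the only surviving entries of row $H_k^{P_0}$ sit in the column blocks $c^P$ with $H_k^{P_0}\in S(P)\cup\bold{\widehat{U}}_{Max}(P)$, which is precisely the statement recalled in Remark~\ref{simplification1}.

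Next I would check that removing the rows in this order does not disturb the earlier ones. The operation~(\ref{op:P0}) adds a multiple of row $H_{m(P_0)-1}^{P_0}$ to row $H_k^{P_0}$ only; since $H_{m(P_0)-1}^{P_0}$ is the polar-minimal line of $\A$ and does not equal any $H_{k'}^{P_0}$ with $k'<k$, and since we are only ever touching the single row $H_k^{P_0}$, there is no interaction among the rows $\{H_{k'}^{P_0}\}_{1<k'<m(P_0)-1}$. (This is exactly the content of the last sentence of Remark~\ref{eliminationP0}: all other rows of the matrix have zero entries on row $H_{m(P_0)-1}^{P_0}$, hence are unchanged.) Therefore after performing~(\ref{op:P0}) for every $k$ with $1<k<m(P_0)-1$ we obtain, for each such row, all entries $e(H_{k'}^{P_0},c_{H_k^{P_0}}^{P})=0$ for $k'\neq k$ together with $e(H_k^{P_0},c_{H_k^{P_0}}^{P})$ of the form $t^\alpha(1-t)$ on the appropriate diagonal column, which is what Definition~\ref{def:remove} requires.

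Finally I would verify that the key columns $c_H^{\last(H)}$, $H\in\A'$, are untouched. By Remark~\ref{rksimplast}(1), for any $H$ and any $1\le k<m(P_0)-1$ we have $H_k^{P_0}\notin S(\last(H))\cup\bold{\widehat{U}}_{Max}(\last(H))\cup\mathcal{N}(\last(H))$: indeed $H_k^{P_0}$ can only lie in $S(\last(H))$ when $k=m(P_0)-1$, and when $H_{m(P_0)-1}^{P_0}\in S(\last(H))$ the lines $H_k^{P_0}$ with $k<m(P_0)-1$ land in $\bold{\widehat{U}}(\last(H))$ but not in the maximal part $\bold{\widehat{U}}_{Max}(\last(H))$. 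Hence by the discussion after Definition~\ref{def:remove} and the characterization iii) at the start of Section~\ref{sec:mains}, the entry of row $H_k^{P_0}$ in the column $c_H^{\last(H)}$ is already zero, so the elimination step~(\ref{op:P0}) leaves that column entirely unchanged (it only modifies row $H_k^{P_0}$, whose entry there is $0$). Combining the three points, the rows $\{H_k^{P_0}\}_{1<k<m(P_0)-1}$ are removed in the sense of Definition~\ref{def:remove} while every column $c_H^{\last(H)}$, $H\in\A'$, is preserved.

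The only mild subtlety — and the step I would be most careful about — is bookkeeping the exact form of $e(H_k^{P_0},c_H^P)$ across the two cases $H=H_{m(P_0)-1}^{P_0}$ (where the relevant column is $c_{m(P)-1}^P$, governed by Lemma~\ref{up0h1}) and $H\neq H_{m(P_0)-1}^{P_0}$ (where it is $c_1^P$, governed by Lemma~\ref{up0}), to make sure that the single coefficient $t^{m(P_0)-k-2}(t-1)$ in~(\ref{op:P0}) simultaneously clears all of them; this works precisely because both lemmas produce the same multiplier $t^{m(P_0)-k-2}(1-t)$ relating $e(H_k^{P_0},\cdot)$ to $e(H_{m(P_0)-1}^{P_0},\cdot)$ on the column $c_H^P$, up to the sign absorbed into $(t-1)$ versus $(1-t)$.
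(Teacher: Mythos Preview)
Your argument misidentifies what ``removal'' means. Operation~(\ref{op:P0}) is $H_k^{P_0} + t^{m(P_0)-k-2}(1-t)\,H_{m(P_0)-1}^{P_0}$: it modifies row $H_k^{P_0}$ by adding a multiple of row $H_{m(P_0)-1}^{P_0}$. This is the preliminary simplification of Remark~\ref{eliminationP0}, not a removal in the sense of Definition~\ref{def:remove}. Removing $H_k^{P_0}$ means choosing a pivot column $c_{H_k^{P_0}}^{P}$ and performing the operations $O_{H_k^{P_0},H'}^{P,P'}=H'-\varphi\,H_k^{P_0}$ for every $H'\rhd H_k^{P_0}$, so that the whole column $c_{H_k^{P_0}}^{P}$ is cleared below the pivot. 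Your second paragraph never carries this out: you only verify that entries $e(H_{k'}^{P_0},c_{H_k^{P_0}}^{P})$ vanish for the other parallel lines $H_{k'}^{P_0}$, whereas Definition~\ref{def:remove} demands $e(H',c_{H_k^{P_0}}^{P})=0$ for \emph{all} $H'\rhd H_k^{P_0}$. Any line $H'$ through $\last(H_k^{P_0})$ or in $\bold{\widehat{U}}(\last(H_k^{P_0}))$ contributes a nonzero entry there that (\ref{op:P0}) does nothing about.

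The paper's proof proceeds differently: working in $D(M)$ (i.e.\ after the diagonalisation of $M_1$ and after~(\ref{op:P0}) have already been performed), it takes $P=\last(H_k^{P_0})$ as the pivot point and then subtracts multiples of row $H_k^{P_0}$ from each row $H'\rhd H_k^{P_0}$ to clear column $c_{H_k^{P_0}}^{\last(H_k^{P_0})}$. The key point---which you actually state correctly in your third paragraph---is that, by Remarks~\ref{simplification1} and~\ref{rksimplast}, row $H_k^{P_0}$ has zero entry in every column $c_H^{\last(H)}$ with $H\in\A'$; hence these subtractions $H'-\varphi H_k^{P_0}$ leave those columns untouched. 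You have the right ingredient but apply it to the wrong operation: it is the pivoting operations $H'-\varphi H_k^{P_0}$ (not~(\ref{op:P0})) whose effect on $c_H^{\last(H)}$ must be controlled, and it is precisely because $e(H_k^{P_0},c_H^{\last(H)})=0$ that they do no damage. Finally, one must check (Remark~\ref{lastmininfty} 1.\ and 2.) that the pivot columns $c_{H_k^{P_0}}^{\last(H_k^{P_0})}$ themselves are distinct from the columns $c_H^{\last(H)}$, $H\in\A'$, so that no collision occurs; this step is absent from your argument.
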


\begin{proof}Let's remove rows $H_k^{P_0}$, $1< k <m(P_0)-1,$ in the matrix $D(M)$ by using the last points $\last(H_k^{P_0})$ and the corresponding columns $c_{H_k^{P_0}}^{\last(H_k^{P_0})}$ defined in Notation \ref{not:goodcolumn}. It is clear that $e(H',c_{H_k^{P_0}}^{\last(H_k^{P_0})})=0$ for all $H'\pol H_k^{P_0}.$ In order to remove the entries $e(H',c_{H_k^{P_0}}^{\last(H_k^{P_0})})\neq 0$ with $H'\rhd H_k^{P_0},$ we perform rows operations by using the entry $e(H_k^{P_0},c_{H_k^{P_0}}^{\last(H_k^{P_0})})=t^\alpha(1-t).$ It follows from Remarks \ref{simplification1} and  \ref{rksimplast} that these row operations do not affects the other columns $c_H^{\last(H)}$ of $D(M).$
We have that the last points $\last(H_k^{P_0}),\,1<k<m(P_0)-1,$ are different from all the other last points $\last(H)$ for the remaining line $H \in \A'$, see Remark \ref{lastmininfty} 1. and 2.
\end{proof}

\begin{definition}We call $\mathcal{G}_{\A_0,\last}$ the subgraph of $\mathcal{G}_{\A'}$ with vertices $$(H,\last(H)),\,\,\text{for all} \,\,H\in \A_0.$$
\end{definition}

\begin{notation} For the sake of simplicity, we will only use the hyperplane $H$ instead of $(H,\last(H))$ for the vertices of $\mathcal{G}_{\A_0,\last}$ and the notation $[H,H']$ for the edge oriented from $(H,\last(H))$ to $(H',\last(H')).$
\end{notation}

Our goal is to show that, under special conditions, in order to study the monodromy of $\A$ it is enough to study the graph $\mathcal{G}_{\A_0,\last}$. 
Let's start by observing that since $\A$ is sharp, all intersections lie in the same side of $H_1^{P_0},$ and then for any line $H_h^{P_i}\rhd H_k^{P_j}$, $P_i,P_j \in \mathcal{P}$, we have that $H_h^{P_i}\cap H_k^{P_j}\rhd H_{h+1}^{P_i}\cap H_k^{P_j}.$  Viceversa, for any line $H_h^{P_i}\pol H_k^{P_j},$ we have that $H_h^{P_i}\cap H_k^{P_j}\pol H_{h+1}^{P_i}\cap H_k^{P_j}$ (see Figure \ref{fig1}). By this we get the following important remark on edges of $\mathcal{G}_{\A_0,\last}$.

\begin{remark}\label{rk1}
By $\A$ being a sharp arrangement, the following facts hold for $\last(H)$, $H \in \A_0$, which allows us to list all the possible edges appearing in $\mathcal{G}_{\A_0,\last}$. As usual let $P_i$, $ 1 \leq i \leq |\mathcal{P} |$, denote points in $\mathcal{P}$ with $H^{P_i} \pol H^{P_j}$ if and only if $i<j$: \\
\begin{enumerate}
\item $H_h^{P_i} \parallel H_k^{P_j}\,\,\Rightarrow \,\, \left \{ \begin{array}{cc} 
h=2,\,k=m(P_j) & \mbox{ if } i>j \\
h=m(P_i),\,k=2 & \mbox{ if } i<j ; 
\end{array}
\right .$
\item if $H_h^{P_i}\in S(\last(H_k^{P_j}))$ then:
\begin{equation}\label{eq:lastp}
\left \{ \begin{array}{cccc} 
h=2 & \mbox{ or } & h=3 \mbox{ and } H_{2}^{P_i} \parallel H_k^{P_j} & \mbox{ if } i>j \\
h=m(P_i) & \mbox{ or } & h=m(P_i)-1 \mbox{ and } H_{m(P_i)}^{P_i} \parallel H_2^{P_j} & \mbox{ if } i<j ;
\end{array}
\right .
\end{equation}
since condition  $H_{2}^{P_i} \parallel H_k^{P_j}$ implies $k=m(P_j)$ and $H_{m(P_i)}^{P_i} \notin \A_0$, conditions (\ref{eq:lastp}) correspond respectively to edges  
$E_1=[H_k^{P_j},H_{2}^{P_i},\pol]$ and $E_2=[H_2^{P_j},H_{m(P_i)-1}^{P_i},\rhd]$ in  $\mathcal{G}_{\A_0,\last}$. Notice that condition $H_{m(P_i)}^{P_i} \parallel H_2^{P_j}$ is empty for $i=0$ as $H_{m(P_0)}^{P_0}$ is the line at infinity. Indeed we can have that
$H_{m(P_0)-1}^{P_0}\in S(\last(H_k^{P_j}))$  for any $k,j$, $j \neq 0$, that is
$E_3=[H_k^{P_j},H_{m(P_0)-1}^{P_0},\rhd]$ is another possible edge in $\mathcal{G}_{\A_0,\last}$.
\item If $H_h^{P_i}\in \bold{\widehat{U}}_{Max}(\last(H_k^{P_j})),$ then $0<i \leq j.$ Indeed, if $j<i$ then $H_h^{P_i} \in U(\last(H_k^{P_j})) \Leftrightarrow H_h^{P_i} \parallel H_k^{P_j}$, that is $k=m(P_j)$ and $H_{k}^{P_j}\notin \A_0.$ The following hold:
\begin{enumerate}
\item if $i=j,$ then $h=k-1$ and the corresponding edge in $\mathcal{G}_{\last,\A_0}$ is $E_4=[H_k^{P_j},H_{k-1}^{P_j},\rhd]$ (remark that this case also includes the case $H_{k-1}^{P_j} \in \mathcal{N}(\last(H_k^{P_j}))$;
\item if $0<i<j$, then we are in one of the following situations (by $H_{m(P_i)}^{P_i}\notin \A_0$):
\begin{enumerate}
\item $h=m(P_i)-1$ with $H_{m(P_i)}^{P_i}\in S(\last(H_k^{P_j}))$ or $H_{m(P_i)}^{P_i} \parallel H_k^{P_j}$ (i.e. $k=2$), and the corresponding edge is $E_5=[H_k^{P_j},H_{m(P_i)-1}^{P_i},\rhd]$; 
\item $h=m(P_i)-2$ with $H_{m(P_i)}^{P_i} \parallel H_k^{P_j}$ (i.e. $k=2$), $H_{m(P_i)-1}^{P_i}\in S(\last(H_2^{P_j}))$, and the corresponding edge is $E_6=[H_2^{P_j},H_{m(P_i)-2}^{P_i},\rhd];$
\end{enumerate}
\end{enumerate}
\item if $H_{m(P_i)-1}^{P_i}\in S(\last(H_2^{P_j})),\, i <j$, then $\last(H_{m(P_i)-1}^{P_i})=\last(H_2^{P_j}).$ Indeed in this case (see figure \ref{fig:last})
$$H \cap H_{m(P_i)-1}^{P_i}\pol \last(H_{m(P_i)-1}^{P_i}) \pol H \cap H_2^{P_j}  \Leftrightarrow H_{m(P_i)} \pol H \pol H_2^{P_j}$$
and, since $\A$ is sharp, by $H_{m(P_i)} \pol H \pol H_2^{P_j}$ it follows $H_{m(P_i)} \parallel H \parallel H_2^{P_j}$ and we finished. 

\begin{figure}[htbp]
\centering
\begin{tikzpicture}[scale=1]

\draw (0,0) node [left] {$H_1^{P_0}$} --(0,4);
\draw (1,1) node [right] {$H_{m(P_i)}^{P_i}$} --(-4,1);
\draw (1,3) node [right] {$H_2^{P_j}$} --(-4,3);
\draw (1,0) node [right] {$H_{m(P_i)-1}^{P_i}$} --(-3,4);

\draw (-2,3) node {$\bullet$}; 
\draw (-2,3) node[below left]{$\last(H_2^{P_j})$} ;

\end{tikzpicture}
\caption{$H_{m(P_i)-1}^{P_i}\in S(\last(H_2^{P_j}))$}
\label{fig:last}
\end{figure}

\end{enumerate}  
This last condition will play an important role in our main Theorem.
\end{remark}
By Remark \ref{rk1} we get that all possible edges in $\mathcal{G}_{\A_0,\last}$ are the one in Figure \ref{fig:edge_glast}.

\begin{figure}[htbp]
\centering
\begin{tikzpicture}

\tikzset{vertex/.style = {shape=rectangle}}
\tikzset{edge/.style = {->,> = latex'}}
\node[vertex] (h) at  (12.5,0) {$H_{k-1}^{P_j}$};
\node[vertex] (h-h1) at  (11,0.6) {$\rhd$};
\node[vertex] (h1) at  (9.5,0) {$H_k^{P_{j}}$};
\node[vertex] (h11-h2) at  (6,0.6) {$\rhd$};
\node[vertex] (h2) at  (4.5,0) {$H_{2}^{P_{j}}$};
\node[vertex] (h11) at  (7.8,0) {$H_{m(P_i)-1}^{P_{i}}$};

\node[vertex] (hk) at  (3,0) {$H_{2}^{P_{i}}$};
\node[vertex] (hk-hbar) at  (1.5,0.6) {$\pol$};
\node[vertex] (hbar) at (0,0) {$H_k^{P_j}$ };

\node[vertex] (hk1) at  (3,-2) {$H_{k}^{P_j}$};
\node[vertex] (hk1-hk2) at  (4.3,-1.5) {$\rhd$};
\node[vertex] (hk2) at  (6,-2) {$H_{m(P_i)-1}^{P_{i}}$};


\node[vertex] (hks) at  (9,-2) {$H_{2}^{P_{j}}$};
\node[vertex] (hks-htilde) at  (10.3,-1.5) {$\rhd$};
\node[vertex] (htilde) at  (12,-2) {$H_{m(P_i)-2}^{P_{i}}$};



\draw[edge] (hbar.east)  to[bend left] (hk.west);
\draw[edge] (h2.east)  to[bend left] (h11.west);
\draw[edge] (h1.east)  to[bend left] (h.west);
\draw[edge] (hk1.east)  to[bend left] (hk2.west);
\draw[edge] (hks.east)  to[bend left] (htilde.west);


\node[vertex] (hk-hbar) at  (1.5,-0.5) {\scriptsize{$E_1: 0 \leq j < i$}};
\node[vertex] (h11-h2) at  (6,-0.5) {\scriptsize{$E_2: 0 \leq i < j$}};
\node[vertex] (h-h1) at  (11,-0.5) {\scriptsize{$E_4: 0 < j$}};
\node[vertex] (hk1-hk2) at  (4.3,-2.7) {\scriptsize{$E_3 \mbox{ and } E_5: 0 \leq i < j$}};
\node[vertex] (hks-htilde) at  (10.3,-2.7) {\scriptsize{$E_6: 0 < i < j$}};

\end{tikzpicture}
\caption{Edges in $\mathcal{G}_{\A_{0},\last}$ }
\label{fig:edge_glast}
\end{figure}


\begin{proposition}\label{prop1} Let $H \neq H'$ be two hyperplanes in $\A_0$  such that 
\begin{center}
$\last(H)= \last(H'),$
\end{center}
then $H,H' \in \{H_2^{P_i}, H_{m(P_i)-1}^{P_i}\}_{1 \leq i \leq |\mathcal{P}|} \cup \{H_{m(P_0)-1}^{P_0}\}.$
\end{proposition}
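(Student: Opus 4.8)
The plan is to reduce the statement to the geometric classification of last points already carried out in Remark~\ref{rk1} and Remark~\ref{lastmininfty}, after a short case split on where $H$ and $H'$ meet the sharp line $H_1^{P_0}$. Since the claim is symmetric in $H$ and $H'$, I would assume $H\pol H'$. Any line of $\A$ other than $H_1^{P_0}$ meets $H_1^{P_0}$ in a single point of $L_2(\A)$ — two such points would force it to coincide with $H_1^{P_0}$ — so one may write $H=H_a^{P_i}$ and $H'=H_b^{P_j}$ with $P_i,P_j\in\mathcal{P}\cup\{P_0\}$. Because $H_1^{P_0}\notin\A_0$ and the lines $H_k^{P_0}$ with $1<k<m(P_0)-1$ have been removed, the only line of $\A_0$ based at $P_0$ is $H_{m(P_0)-1}^{P_0}$, which is moreover the smallest line of $\A$; hence if $P_0$ is one of the two base points, it is necessarily that of $H$. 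I would also record that, by \eqref{eq:remove}, $\A_0$ contains no line $H_{m(P)}^{P}$ with $P\in\mathcal{P}$, so a line of $\A_0$ based at a point $P_i\in\mathcal{P}$ is $H_a^{P_i}$ with $2\le a\le m(P_i)-1$. Finally set $Q:=\last(H)=\last(H')$, so that $Q=H\cap H'$ lies on both lines.

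The case $H=H_{m(P_0)-1}^{P_0}$ is then immediate: this line already belongs to the asserted set, and from $\last(H_{m(P_0)-1}^{P_0})=\last(H')$ with $H'\neq H_{m(P_0)-1}^{P_0}$, Remark~\ref{lastmininfty}(5) gives $H'=H_2^{P_l}$ for some $P_l\in\mathcal{P}$, again in the asserted set. So from now on $i,j\ge 1$. The next step is to rule out $i=j$. In that case $Q=H\cap H'=P_i$; but $P_i=H\cap H_1^{P_0}$ is the \emph{first} intersection point of $H$ along $H$, so $Q=\last(H)$ would force $P_i$ to be the only intersection point of $H$, and similarly the only one of $H'$. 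Then any line $L\neq H$ not parallel to $H$ meets $H$ in a point that must equal $P_i$, so $L$ passes through $P_i$; and a line parallel to $H$ would meet $H'$ — which, passing through $P_i$ and being distinct from $H$, is not parallel to $H$ — in a point different from $P_i$, a second intersection point of $H'$, which is impossible. Hence every line of $\A$ passes through $P_i$, so $\A$ is a pencil with $m(P_i)=|\A|$, whence $\gcd(m(P_i),|\A|+1)=1$ and $H\in\A_r$ by \eqref{eq:remove}, contradicting $H\in\A_0$. Since $H\pol H'$, this leaves $i<j$.

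In the remaining case $i<j$, the idea is simply to feed the relation $H_a^{P_i}\in S(\last(H_b^{P_j}))$ into Remark~\ref{rk1}(2): in the range $i<j$ it forces $a=m(P_i)$ or $a=m(P_i)-1$, and since $H\in\A_0$ the first possibility is excluded, so $a=m(P_i)-1$; the edge produced in that situation being $E_2=[H_2^{P_j},H_{m(P_i)-1}^{P_i},\rhd]$ — equivalently, $H_{m(P_i)}^{P_i}\parallel H_b^{P_j}$, which by Remark~\ref{rk1}(1) means $b=2$ — one gets $b=2$ as well. Therefore $H=H_{m(P_i)-1}^{P_i}$ and $H'=H_2^{P_j}$, both in the asserted set, completing the proof.

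The only step I expect to require genuine care is the exclusion of $i=j$: namely, checking that two distinct lines of $\A_0$ through a common point $P_i\in\mathcal{P}$ cannot both have $P_i$ as their last intersection point, the obstruction being the pure-pencil configuration, which is disposed of by the $\gcd$-condition built into $\A_r$. Everything else is a direct reading of the last-point bookkeeping of Remarks~\ref{rk1} and~\ref{lastmininfty}.
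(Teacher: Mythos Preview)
Your proof is correct and follows the same approach as the paper, which simply says ``Follows directly from Remark~\ref{rk1}.'' Your argument unpacks this one-liner: you apply the classification in Remark~\ref{rk1}(2) of lines in $S(\last(H_k^{P_j}))$, together with Remark~\ref{lastmininfty}(5) for the $H_{m(P_0)-1}^{P_0}$ case, and the parallel constraint of Remark~\ref{rk1}(1) to pin down both indices. Your explicit exclusion of the case $i=j$ via the pencil argument fills in a detail that Remark~\ref{rk1}(2) does not literally cover (it only treats $i>j$ and $i<j$), so your version is in fact slightly more complete than the paper's.
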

\proof Follows directly from Remark \ref{rk1}.
\endproof

In particular the two following  propositions hold.

\begin{proposition}\label{prop2}
Given a line $H_2^{P_i}\in \A_0,$ if there exists $H\in \A_0$ such that $\last(H)=\last(H_2^{P_i}),$ then we are in one of the following cases:
\begin{enumerate}
\item $H=H_{m(P_j)-1}^{P_j}$ with $0 \leq j < i.$ Moreover, if $j\neq 0,$ then $H_{m(P_j)}^{P_j} \parallel H_2^{P_i}.$
\item $H=H_{m(P_j)-1}^{P_j}$ with $0<i<j$ and $m(P_i)=m(P_j)=3$.
\item $H=H_2^{P_j}$ with $0<j<i$ and $m(P_j)=3$.
\item $H=H_2^{P_j}$ with $0<i<j$ and $m(P_i)=3.$ 

\end{enumerate}
\end{proposition}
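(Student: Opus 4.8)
The plan is to derive this as a refinement of Proposition~\ref{prop1} by running the edge classification of Remark~\ref{rk1}. Proposition~\ref{prop1} already forces any $H\in\A_0$ with $H\neq H_2^{P_i}$ and $\last(H)=\last(H_2^{P_i})$ to lie in $\{H_2^{P_j}\}_{j}\cup\{H_{m(P_j)-1}^{P_j}\}_{j}$, where I absorb $H_{m(P_0)-1}^{P_0}$ into the second family as the value $j=0$. First I would note that $j\neq i$: two distinct lines of $S(P_i)$ meet only at $P_i\in\mathcal{P}$, which can never be a last point since $\last(\cdot)\in L_2(\A)\setminus\mathcal{P}$, while if $m(P_i)=3$ then $H_2^{P_i}=H_{m(P_i)-1}^{P_i}$, again excluding $H=H_{m(P_i)-1}^{P_i}$. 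By the indexation rule $H_h^{P_a}\pol H_{h'}^{P_b}\Leftrightarrow(a,h)<(b,h')$ we then have $H\pol H_2^{P_i}$ exactly when $j<i$ and $H\rhd H_2^{P_i}$ exactly when $j>i$, so the four alternatives of the statement are precisely the combinations ``type of $H$'' $\times$ ``$j<i$ or $j>i$'' --- with the proviso that when $j=0$, or $i=0$, the line $H_2^{P_0}$ lies in $\A_0$ only if $m(P_0)=3$, in which case $H_2^{P_0}=H_{m(P_0)-1}^{P_0}$ and the situation is subsumed in case~(1); this is why cases~(2)--(4) do not involve $P_0$.

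Next set $P':=\last(H)=\last(H_2^{P_i})$; since $H,H_2^{P_i}\in S(P')$, both inclusions $H_2^{P_i}\in S(\last(H))$ and $H\in S(\last(H_2^{P_i}))$ hold, and I would feed each into Remark~\ref{rk1}(2), which pins down the local index of a line sitting in $S$ of a given last point. From $H\in S(\last(H_2^{P_i}))$: in case~(1), where $H=H_{m(P_j)-1}^{P_j}$ with $j<i$, the index $m(P_j)-1$ can only occur in the ``$i<j$'' branch together with the side condition $H_{m(P_j)}^{P_j}\parallel H_2^{P_i}$, which is the asserted parallelism (vacuous for $j=0$, as $H_{m(P_0)}^{P_0}=H_\infty$); in case~(3), where $H=H_2^{P_j}$ with $j<i$, the index $2$ must equal $m(P_j)$ or $m(P_j)-1$, and since $m(P_j)=2$ would place $H=H_{m(P_j)}^{P_j}$ in $\A_r$ and not in $\A_0$, one gets $m(P_j)=3$. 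Symmetrically, from $H_2^{P_i}\in S(\last(H))$ one gets $m(P_i)=3$ in case~(4), and the two inclusions together give $m(P_i)=m(P_j)=3$ in case~(2).

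The only step that is more than bookkeeping is excluding $m(P_j)=4$ in case~(2): there $H=H_{m(P_j)-1}^{P_j}\in S(\last(H_2^{P_i}))$ with $i<j$, so the ``$i>j$'' branch of Remark~\ref{rk1}(2) allows $m(P_j)-1\in\{2,3\}$; the value $3$ would carry the side condition $H_2^{P_j}\parallel H_2^{P_i}$, but by the parallelism classification Remark~\ref{rk1}(1), applied with $i<j$, this forces $m(P_i)=2$, contradicting the $m(P_i)=3$ already obtained, so $m(P_j)=3$. Thus the genuine obstacle is notational --- correctly translating Remark~\ref{rk1}'s local index convention through the two symmetric roles (which line sits in $S$ of whose last point) --- while the sharpness of $\A$ enters precisely through the parallelism classification Remark~\ref{rk1}(1), which both closes case~(2) and supplies the parallel lines in case~(1).
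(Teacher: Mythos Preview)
Your proof is correct and follows essentially the same route as the paper's own argument: both start from Proposition~\ref{prop1} to reduce $H$ to the form $H_2^{P_j}$ or $H_{m(P_j)-1}^{P_j}$, then feed the two symmetric inclusions $H\in S(\last(H_2^{P_i}))$ and $H_2^{P_i}\in S(\last(H))$ into Remark~\ref{rk1}(2) to extract the multiplicity and parallelism constraints. Your treatment is in fact more explicit than the paper's --- the paper's proof simply asserts in case~(2) that ``the biggest has to be the second line in its point'' without spelling out why the alternative $m(P_j)-1=3$ (with side condition $H_2^{P_j}\parallel H_2^{P_i}$) is impossible, whereas you close this via Remark~\ref{rk1}(1) and the already-obtained $m(P_i)=3$.
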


\proof If $H=H_k^{P_j}$, then by Proposition \ref{prop1}, $k=2$ or $k=m(P_j)-1.$ If $k=2,$ we are in cases 3. and 4. and by remark \ref{rk1} 2., the smallest between $H_2^{P_j}$ and $H_2^{P_i}$ has to be the last minus one line in its point $P_i$ or $P_j,$ which implies $m(P_i)=3$ or $m(P_j)=3.$ If $k=m(P_j)-1,$ then by Remark \ref{rk1} 2. the biggest between $H_{m(P_j)-1}^{P_j}$ and $H_2^{P_i}$ has to be the second line in its point, while the smallest one has to be the last minus one. This directly imply points 1. and 2.  
\endproof

\begin{proposition}\label{prop3}
Given a line $H_{m(P_i)-1}^{P_i}\in \A_0,$ if $H\in \A_0$ is such that $\last(H)=\last(H_{m(P_i)-1}^{P_i}),$ then we are in one of the following situations:
\begin{enumerate}

\item $H=H_{m(P_j)-1}^{P_j}$ with $0 \leq j <i$ and $m(P_i)=3$.
\item $H=H_{m(P_j)-1}^{P_j}$ with $0 \leq i < j $ and $m(P_j)=3$.
\item $H=H_2^{P_j}$ with $0 \leq i < j.$ Moreover, if $i \neq 0$ then $H_{m(P_i)}^{P_i} \parallel H.$
\item $H=H_2^{P_j}$ with $0<j<i$ and $m(P_i)=m(P_j)=3$.
\end{enumerate}
\end{proposition}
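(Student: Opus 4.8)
The plan is to mimic exactly the strategy of Propositions \ref{prop1} and \ref{prop2}, reducing everything to the case analysis recorded in Remark \ref{rk1}. First I would invoke Proposition \ref{prop1}: since $H_{m(P_i)-1}^{P_i}\in\A_0$ and $\last(H)=\last(H_{m(P_i)-1}^{P_i})$ with $H\ne H_{m(P_i)-1}^{P_i}$, both lines must lie in $\{H_2^{P_j},H_{m(P_j)-1}^{P_j}\}_{1\le j\le|\mathcal P|}\cup\{H_{m(P_0)-1}^{P_0}\}$. In particular $H=H_2^{P_j}$ or $H=H_{m(P_j)-1}^{P_j}$ for some $j$ (with the convention that $H_{m(P_0)-1}^{P_0}=H_{m(P_0)-1}^{P_0}$ fits the second shape with $j=0$). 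This splits the argument into the two shapes, and in each shape one has to determine for which indices $j$ (relative to $i$) the coincidence $\last(H)=\last(H_{m(P_i)-1}^{P_i})$ can actually occur, and with which extra multiplicity constraints.

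Next, for the shape $H=H_{m(P_j)-1}^{P_j}$: I would use Remark \ref{rk1}, item 2, applied to the pair $(H_{m(P_i)-1}^{P_i},H_{m(P_j)-1}^{P_j})$. One of the two lines lies in $S(\last(\cdot))$ of the other, and by \eqref{eq:lastp} the line belonging to the $S$-set must either be the second line $H_2$ at its point, or be the last-minus-one line with the parallelism condition $H_{m(P)}\parallel(\text{the other})$. Since here \emph{both} lines are already of the form $H_{m(P)-1}$, the only way one of them can simultaneously be ``the second line'' at its point is that its point has multiplicity $3$ (so $H_2^{P}=H_{m(P)-1}^{P}$). A short orientation argument using sharpness — ``all intersections lie on one side of $H_1^{P_0}$'', exactly as in the displayed computation preceding Figure \ref{fig:last} — then tells me which of $i<j$ or $j<i$ forces which point to have multiplicity $3$, yielding cases 1 and 2. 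The case $j=0$ is automatically covered because $H_{m(P_0)}^{P_0}=H_\infty$ makes the parallelism condition at $P_0$ vacuous, as already noted in Remark \ref{rk1}, item 2.

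For the shape $H=H_2^{P_j}$: again I would apply Remark \ref{rk1}, item 2, to the pair $(H_{m(P_i)-1}^{P_i},H_2^{P_j})$, together with item 4 of that same remark, which is precisely the statement that if $H_{m(P_i)-1}^{P_i}\in S(\last(H_2^{P_j}))$ with $i<j$ then $\last(H_{m(P_i)-1}^{P_i})=\last(H_2^{P_j})$ and moreover $H_{m(P_i)}^{P_i}\parallel H_2^{P_j}$ (this gives case 3, with the parallelism clause, and the $i\ne 0$ caveat coming from $H_{m(P_0)}^{P_0}=H_\infty$). If instead $j<i$, then $H_2^{P_j}\rhd H_{m(P_i)-1}^{P_i}$ is impossible for a coincidence of last points unless $H_2^{P_j}$ is itself a last-minus-one line, i.e. $m(P_j)=3$, and symmetrically the sharpness/orientation argument forces $m(P_i)=3$ as well, giving case 4. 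The main obstacle I anticipate is the bookkeeping of the orientation inequalities — keeping straight, in each of the sub-cases, which of the two intersection points comes first along its line and hence which multiplicity is pinned to $3$ — but this is the same mechanical sharpness argument already carried out in Remark \ref{rk1} and in the proof of Proposition \ref{prop2}, so no genuinely new idea is needed; it is purely a matter of transcribing the case split carefully.
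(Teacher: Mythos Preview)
Your plan is correct and is exactly the approach the paper has in mind: the paper's own proof of this proposition consists of the single line ``Similar to the proof of Proposition \ref{prop2}'', and what you have written is precisely that similar argument spelled out --- invoke Proposition \ref{prop1} to reduce to the two shapes $H=H_2^{P_j}$ or $H=H_{m(P_j)-1}^{P_j}$, then run the case analysis of Remark \ref{rk1}(2) (and (4) for the parallelism clause) on each ordered pair. One minor bookkeeping remark: in case 4 ($j<i$, $H=H_2^{P_j}$) the cleanest way to pin down \emph{both} multiplicities is to apply Remark \ref{rk1}(2) twice, once to $H_2^{P_j}\in S(\last(H_{m(P_i)-1}^{P_i}))$ and once to $H_{m(P_i)-1}^{P_i}\in S(\last(H_2^{P_j}))$, rather than arguing ``symmetrically''; this immediately forces $m(P_j)=3$ and $m(P_i)=3$ respectively (the spurious possibilities $m(P_j)=2$ and $m(P_i)=4$ with parallelism being excluded by $H\in\A_0$ and by Remark \ref{rk1}(1)).
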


\proof Similar to the proof of Proposition \ref{prop2}.
\endproof

In many situations, it happens that the graph $\mathcal{G}_{\A_{0},\last}$ has cycles. We will see that these cycles involve points $P_i \in \mathcal{P}$ of multiplicity 3. Hence, let's now define
\begin{equation}\label{eq:A03}
\A_{(0,3)}= \A_0 \backslash \{H_h^{P_i} |\,P_i\in \mathcal{P} \mbox{ and } m(P_i)=3\} .
\end{equation}
By above Propositions \ref{prop1}, \ref{prop2} and \ref{prop3} it follows that if, for hyperplanes in $\A_{(0,3)}$, 
\begin{equation}\label{cond:last}
\last(H_{m(P_i)-1}^{P_i}) \neq \last(H_2^{P_j}),  \mbox{ for all } 0 \leq i <j, 
\end{equation}
then  
$$| \mathcal{V}_0(\mathcal{G}_{\A_{(0,3)},\last})|=|\A_{(0,3)}|, $$
$| \mathcal{V}_0(\mathcal{G}_{\A_{(0,3)},\last})|$ being the number of vertices of $\mathcal{G}_{\A_{(0,3)},\last}$, and the following results holds.

\begin{proposition}\label{thm1}
Let $\A$ in $\R^2$ be a sharp arrangement such that $$\last(H_{m(P_i)-1}^{P_i}) \neq \last(H_2^{P_j}), \quad 0 \leq i <j, $$ 
holds for hyperplanes in $\A_{(0,3)}$. Then the matrix associated to $\mathcal{G}_{\A_{(0,3)},\last}$ can be diagonalized as $\diag(1-t).$
\end{proposition}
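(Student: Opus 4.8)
The plan is to derive the statement from Remark \ref{rkamono}: once we know that $\mathcal{G}_{\A_{(0,3)},\last}$ has exactly $|\A_{(0,3)}|$ vertices, all carrying pairwise distinct hyperplanes $H$ and, under the hypothesis (\ref{cond:last}) together with Propositions \ref{prop1}, \ref{prop2}, \ref{prop3}, pairwise distinct points $\last(H)$, it is enough to show that $\mathcal{G}_{\A_{(0,3)},\last}$ carries no cycle of the type in Figure \ref{fig:cycle}; then the submatrix with rows $H$ and key columns $c_H^{\last(H)}$ triangularizes with all pivots $1-t$, and clearing the remaining off-diagonal entries by column operations (as in Remark \ref{rk:removetrivial}) yields $\diag(1-t)$.

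First I would pin down the available edges. By Remark \ref{rk1} (summarized in Figure \ref{fig:edge_glast}) every edge of $\mathcal{G}_{\A_{(0,3)},\last}$ is of one of the types $E_1,\dots,E_6$. The hypothesis (\ref{cond:last}) rules out $E_2$ and $E_6$: each of them requires $H_{m(P_i)-1}^{P_i}\in S(\last(H_2^{P_j}))$ for some $0\le i<j$, which by Remark \ref{rk1}(4) forces $\last(H_{m(P_i)-1}^{P_i})=\last(H_2^{P_j})$, contradicting (\ref{cond:last}), which applies because any edge of $\mathcal{G}_{\A_{(0,3)},\last}$ has both endpoints in $\A_{(0,3)}$. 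Hence the only surviving edges are the $\pol$-edge $E_1$ and the $\rhd$-edges $E_3,E_4,E_5$.

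Next I would analyze the shape of a hypothetical cycle. A cycle as in Figure \ref{fig:cycle} is the concatenation of a single maximal $\pol$-run and a single maximal $\rhd$-run glued at their endpoints; in particular it has a unique polar-maximal vertex $H$, and the edge entering $H$ is a $\pol$-edge, hence an $E_1$, so $H=H_2^{P_m}$ for some $P_m\in\mathcal P$ with $m\ge 1$. Since every $E_1$ has a second line $H_2^{P_\bullet}$ as its target, the whole $\pol$-run feeding $H$ is a chain $H_{k_0}^{P_{j_0}}\xrightarrow{E_1}H_2^{P_{j_1}}\xrightarrow{E_1}\cdots\xrightarrow{E_1}H_2^{P_m}$ with strictly increasing point indices $j_0<j_1<\cdots<m$. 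On the other side, starting from $H=H_2^{P_m}$, the $\rhd$-run can begin only with $E_3$ (landing on the global polar-minimum $H_{m(P_0)-1}^{P_0}$, out of which no $\rhd$-edge exists), with $E_4$ (landing on $H_1^{P_m}$), or with $E_5$ (landing on some $H_{m(P_i)-1}^{P_i}$ with $i<m$, which additionally forces $H_{m(P_i)}^{P_i}\parallel H_2^{P_m}$). In each case I would follow the $\rhd$-run further and try to reconnect it to the initial vertex $H_{k_0}^{P_{j_0}}$ of the $\pol$-run, deriving a contradiction by combining the monotonicity of intersection points along lines of a sharp arrangement recorded just before Remark \ref{rk1} (for $H_h^{P_i}\rhd H_k^{P_j}$ one has $H_h^{P_i}\cap H_k^{P_j}\rhd H_{h+1}^{P_i}\cap H_k^{P_j}$, and the dual statement for $\pol$), the strict increase of point indices along the $\pol$-run against its non-increase along the $\rhd$-run, and the pairwise distinctness of the last points.

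The hard part will be the bookkeeping of this reconnection, in particular the borderline case in which the descent from $H_2^{P_m}$ immediately reaches $H_{m(P_0)-1}^{P_0}$ (via $E_3$) and the $\pol$-run must re-emanate from $H_{m(P_0)-1}^{P_0}$ (via $E_1$): ruling this out amounts to comparing, on the fixed line $H_{m(P_0)-1}^{P_0}$, the positions of its intersections with $H_2^{P_m}$ and with the first line $H_2^{P_{j_1}}$ of the $\pol$-run, and observing that $\last(H_{m(P_0)-1}^{P_0})$ cannot simultaneously be the outermost of these (forced by $E_3$ through $\last(H_2^{P_m})$) and a strictly inner one (forced by $E_1$). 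The remaining descent cases ($E_4$ to $H_1^{P_m}$, $E_5$ to $H_{m(P_i)-1}^{P_i}$) are handled analogously but are more routine once one keeps track of which point of $\mathcal P$ each line passes through.
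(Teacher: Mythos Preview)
Your overall strategy matches the paper's: verify that $\mathcal{G}_{\A_{(0,3)},\last}$ has $|\A_{(0,3)}|$ vertices with pairwise distinct hyperplanes and pairwise distinct last points (via Propositions \ref{prop1}--\ref{prop3} together with (\ref{cond:last})), then show that no cycle of the type in Figure \ref{fig:cycle} occurs, and conclude by Remark \ref{rkamono}. You are also right that the hypothesis together with Remark \ref{rk1}(4) kills the edges $E_2$ and $E_6$ inside $\A_{(0,3)}$.

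Where you diverge from the paper is in the cycle analysis. The paper's argument (Lemmas \ref{lem1} and \ref{propmono3last}) turns on one geometric observation that you do not isolate: two consecutive $\pol$-edges
\[
[H_h^{P_j},H_2^{P_i},\pol],\quad [H_2^{P_i},H_2^{P_k},\pol]\qquad (j<i<k)
\]
force $\last(H_2^{P_i})=\last(H_h^{P_j})$, and hence, by Proposition \ref{prop2} and (\ref{cond:last}), a point of multiplicity $3$. This single remark collapses the problem: a cycle in $\mathcal{G}_{\A_{(0,3)},\last}$ cannot contain two consecutive $E_1$'s, so by Lemma \ref{lem1} it has length at most $4$, and the three residual short cycles are then eliminated one by one.

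Your direct approach --- trace the $\rhd$-descent from the polar-maximal vertex and show it cannot reconnect to the start of the $\pol$-run --- could in principle succeed, but the reconnection bookkeeping that you flag as ``the hard part'' is exactly where the proof lives, and you have not carried it out. Two small slips: the edge $E_4$ out of $H_2^{P_m}$ would land on $H_1^{P_m}=H_1^{P_0}$, which has been removed and is not in $\A_{(0,3)}$, so that case is vacuous; and your description of $E_5$ omits the alternative $H_{m(P_i)}^{P_i}\in S(\last(H_2^{P_m}))$ (not only the parallel case). The paper's ``two consecutive $E_1$'s'' observation is precisely what makes the case analysis finite and short; without it you are left with an open-ended descent.
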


The following main theorem, stated in Introduction, follows from the previous proposition and remarks \ref{lastmininfty} and \ref{rkamono}.

\begin{theorem}\label{thm1.1} Let $\A \subset \R^2$ be a sharp arrangement such that $$\last(H_{m(P_i)-1}^{P_i}) \neq \last(H_2^{P_j}), \quad 0 \leq i <j, $$
holds for hyperplanes in $\A_{(0,3)}$. Then $\A$ is a- or 3-monodromic.
\end{theorem}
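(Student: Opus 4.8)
The plan is to combine the successive simplifications of Sections~\ref{sec:sharp} and~\ref{sec:mains} into a diagonalization of $Mat(\partial_2)$ whose pivots are only $1-t$ and $1-t^3$, and then to quote Remark~\ref{rkamono}: since $1-t^3=(1-t)\varphi_3$, this forces the only possible non-trivial cyclic module in $H_1(M(\A);R_t)$ to be $R/\varphi_3$, i.e. $\A$ to be a- or $3$-monodromic.

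First I would run the reduction chain already at our disposal. The row $H_1^{P_0}$ is deleted by the inclusion of Section~\ref{inclusion}; the operations $O^P_{i+1,i}(t^{-1})$, $P\in\mathcal{P}$, diagonalize $M_1$ into the blocks $D(P)$ (Lemma~\ref{lemdiag2}); and by the argument of Remark~\ref{rkdiag} the rows in $\A_r$ of~(\ref{eq:remove}) can be discarded, contributing only pivots $1-t$ (the clause $i=m(P)$, which subsumes all double points) or pivots $1-t^{m(P)}$ with $\gcd(m(P),|\A|+1)=1$ or $\gcd(m(P),m(P_0))=1$, which cannot survive for divisibility reasons and reduce to $\diag(1-t)$. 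After this, by Theorem~\ref{thmeffect} and Remark~\ref{eliminationP0} each surviving row $H$ is non-zero only in the blocks $c^{P'}$ with $H\in S(P')\cup\bold{\widehat{U}}_{Max}(P')\cup\mathcal{N}(P')$, so we are working inside $\mathcal{G}_{\A'}$; then Lemma~\ref{removeHP0last} removes the rows $\{H_k^{P_0}\}_{1<k<m(P_0)-1}$ without touching the distinguished columns $c_H^{\last(H)}$, $H\in\A'$. We are thus reduced to diagonalizing the square submatrix $M^{\circ}$ with rows indexed by $\A_0$ and columns the $c_H^{\last(H)}$, $H\in\A_0$, the remaining entries being cleared by elementary column operations (Remark~\ref{rk:removetrivial}).

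Now I would split $\A_0=\A_{(0,3)}\sqcup\mathcal{R}_3$, where $\mathcal{R}_3:=\A_0\setminus\A_{(0,3)}$ consists exactly of the lines $H_2^{P_i}$ attached to the triple points $P_i\in\mathcal{P}$, these being the only lines through such a point that are neither $H_1^{P_0}$ nor in $\A_r$; in particular $\mathcal{R}_3\neq\emptyset$ forces $3\mid|\A|+1$ and $3\mid m(P_0)$. For a row $H_2^{P_i}\in\mathcal{R}_3$ the diagonalization of $M_1$ has already set $e(H_2^{P_i},c_1^{P_i})=1-t^{m(P_i)}=1-t^3$ on its distinguished column $c_{H_2^{P_i}}^{P_i}=c_1^{P_i}$, so such a row can produce at most the cyclic module $R/\varphi_3$. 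For the rows $H\in\A_{(0,3)}$, the hypothesis $\last(H_{m(P_i)-1}^{P_i})\neq\last(H_2^{P_j})$, $0\le i<j$, together with Propositions~\ref{prop1},~\ref{prop2},~\ref{prop3}, makes the points $\{\last(H)\}_{H\in\A_{(0,3)}}$ pairwise distinct, so $\mathcal{G}_{\A_{(0,3)},\last}$ has $|\A_{(0,3)}|$ vertices and Proposition~\ref{thm1} diagonalizes the corresponding block of $M^{\circ}$ as $\diag(1-t)$. Finally, Remark~\ref{lastmininfty} confines the residual coincidences $\last(H)=\last(H')$, $H\neq H'$, among the leftover rows to the lines $H_2^P$, $H_{m(P)-1}^P$ through triple points $P\in\mathcal{P}$ and to the single line $H_{m(P_0)-1}^{P_0}$; together with Lemma~\ref{lemmacycle} this shows that diagonalizing the $\A_{(0,3)}$-block does not spoil the pivots $1-t^3$ carried by $\mathcal{R}_3$. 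Hence $M^{\circ}$, and so $Mat(\partial_2)$, is diagonalizable with pivots only in $\{1-t,\,1-t^3\}$, and Remark~\ref{rkamono} gives the claim.

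I expect the last step --- decoupling the $\A_{(0,3)}$-block from the residual rows $\mathcal{R}_3$ (and from $H_{m(P_0)-1}^{P_0}$) --- to be the genuine obstacle. Precisely, one must check via Lemma~\ref{lemmacycle} that no cycle of $\mathcal{G}_{\A_0,\last}$ produces a submatrix of the shape~(\ref{submatrixproblem}) straddling a row of $\A_{(0,3)}$ and a row of $\mathcal{R}_3$; this is exactly where the hypothesis~(\ref{cond:last}) enters, via Propositions~\ref{prop1}--\ref{prop3} and the parallelism constraints recorded there, showing that any such cycle stays inside the family of lines attached to triple points of $\mathcal{P}$ and therefore only carries order-$3$ information.
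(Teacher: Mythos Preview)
Your argument follows exactly the paper's route: the one-line proof there just cites Proposition~\ref{thm1} together with Remarks~\ref{lastmininfty} and~\ref{rkamono}, and you have unpacked that chain correctly (Section~\ref{inclusion}, Lemma~\ref{lemdiag2}, Remark~\ref{rkdiag}, Theorem~\ref{thmeffect}, Remark~\ref{eliminationP0}, Lemma~\ref{removeHP0last}, then Proposition~\ref{thm1} on the $\A_{(0,3)}$-block and the residual $1-t^3$ pivots from $D(M_1)$ on~$\mathcal{R}_3$).

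The one place where you over-complicate matters is the ``decoupling'' paragraph. You do \emph{not} need to rule out cycles of $\mathcal{G}_{\A_0,\last}$ straddling $\A_{(0,3)}$ and~$\mathcal{R}_3$. The decoupling is automatic, for exactly the reason spelled out in Remark~\ref{rkdiag}: put the $\mathcal{R}_3$-rows at the bottom; since $D(M_1)$ is block-diagonal over the points of~$\mathcal{P}$, every row $H\in\A_{(0,3)}$ has zero entry in the column $c_1^{P_k}$ carrying the pivot $1-t^3$ of a row $H_2^{P_k}\in\mathcal{R}_3$ (the two rows lie in different blocks $D(P_j)$, $D(P_k)$ with $m(P_j)\neq 3=m(P_k)$, and the $H_k^{P_0}$ rows are identically zero in $D(M_1)$). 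Hence any row operation $H'-\varphi H$ performed while triangularizing the $\A_{(0,3)}$-block leaves those $1-t^3$ pivots untouched, and the subsequent column clean-up via Remark~\ref{rk:removetrivial} cannot disturb them either. The cycle analysis (Lemmas~\ref{lem1},~\ref{propmono3last}) is needed only \emph{inside} $\mathcal{G}_{\A_{(0,3)},\last}$, and that is precisely Proposition~\ref{thm1}.

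Two small slips: (i) your $M^\circ$ should carry columns $c_H^{\last(H)}$ only for $H\in\A_{(0,3)}$, while for $H\in\mathcal{R}_3$ you are (correctly) using the $M_1$-column $c_1^{P_i}$ instead; (ii) $H_{m(P_0)-1}^{P_0}$ is a vertex of $\A_{(0,3)}$, not a residual row---it is handled by Proposition~\ref{thm1}, so there is nothing separate to check for it.
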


\begin{remark}\label{rem:diffpol} Condition $$\last(H_{m(P_i)-1}^{P_i}) \neq \last(H_2^{P_j}),  \quad 0 \leq i <j $$ 
for hyperplanes in $\A_{(0,3)}$ and, more in general, for hyperplanes in $\A_{0}$ strictly depends on the chosen polar coordinates system $(V_0,V_1)$. Indeed the only requirement is that lines $(H_{\infty},\overline{H}_1^{P_0})$ are sharp pair. This is equivalent to say that sufficient condition for (\ref{cond:last}) to hold is that it exists a polar coordinates system $(V_0,V_1)$ with lines $(H_{\infty},\overline{H}_1^{P_0})$ sharp pair such that (\ref{cond:last}) holds for this system.\\
In particular for any sharp pair there are four different natural choices that are the two different ways we can choose the line at infinity, that is if $(\overline{H},\overline{H}')$ is the sharp pair we can have
\begin{enumerate}
\item $(\overline{H},\overline{H}')=(H_{\infty},\overline{H}_1^{P_0})$;
\item $(\overline{H}',\overline{H})=(H_{\infty},\overline{H}_1^{P_0})$.
\end{enumerate} 
The other two options depend on the choice of the origin $V_0$ that can be 
\begin{enumerate}
\item[i.] in the chamber in the bottom left corner as in Figure \ref{fig1} 
\item[ii.] or in the chamber in the upper left corner as in Figure \ref{fig:example4}. 
\end{enumerate}
It is not difficult to check that for a given choice $(V_0,V_1)$ of the polar coordinates system such that $(H_{\infty},\overline{H}_1^{P_0})$ is sharp pair, the following four conditions on hyperplanes of $\A$ are equivalent to (\ref{cond:last}) with respect to the four different natural choices described above.
\begin{enumerate}
\item[1.i.] $\last(H_{m(P_i)-1}^{P_i}) \neq \last(H_2^{P_j}),\, 0 \leq i <j $;
\item[1.ii.]$\last(H_{3}^{P_i}) \neq \last(H_{m(P_j)}^{P_j})$ and $\mini(H_{m(P_0)-1}^{P_0}) \neq \last(H_{m(P_j)}^{P_j}), \, 0 \leq j <i $;
\item[2.i] $\mini(H_{2}^{P_j}) \neq \mini(H_{m(P_{j+1})}^{P_{j+1}})$ and $\mini(H_{2}^{P_0}) \neq \mini(H_{m(P_{j})}^{P_{j}}),\, 0 < j$;
\item[2.ii.] $\mini(H_{2}^{P_j}) \neq \mini(H_2^{P_{j-1}})$ and $\last(H_{2}^{P_0}) \neq \mini(H_2^{P_{j}}),\,0 < j$;
\end{enumerate}
Notice that, by $\A$ sharp arrangement, conditions 2.i. and 2.ii. can only fail for points $P_j$ such that $m(P_j)=2$.
\end{remark}

\begin{example}\label{example1}
Let $\A$ be a sharp arrangement such that condition (\ref{cond:last}) is satisfied for at least one choice of polar coordinate system $(V_0,V_1)$ among those that are described in Remark \ref{rem:diffpol}. Then with Theorem \ref{thm1.1} we have that:
\begin{center}
if $\gcd(3,m(P_0))=1,$ then $\A$ is a-monodromic.
\end{center}
Such arrangement is given in Example \ref{example4}.
\end{example}

Theorem \ref{thm1.1} provides a case where Conjecture \ref{conj:amon} holds. Indeed by the upper bound for torsion local system given by Papadima-Suciu (see \cite[Theorem C]{PS}) and the vanishing of the Aomoto complex when $\Gamma(\A)$ is connected (see \cite[Lemma 2.1]{Bailet}), fist author proved (see \cite{Bailet}) that Conjecture \ref{conj:amon} holds if $\A$ only admits monodromies $p^{k}$, $p$ prime.
\begin{corollary}
Let $\A$ in $\R^2$ be a sharp arrangement such that 
$$\last(H_{m(P_i)-1}^{P_i}) \neq \last(H_2^{P_j}), \quad 0 \leq i <j $$ 
holds for hyperplanes in $\A_{(0,3)}$. If the graph of double points $\Gamma(\A)$ is connected, then $\A$ is a-monodromic. 
\end{corollary}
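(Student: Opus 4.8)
The plan is to combine Theorem~\ref{thm1.1} with the result of the first author recalled in the paragraph preceding the statement. First I would apply Theorem~\ref{thm1.1}: since $\A$ is sharp and the hypothesis $\last(H_{m(P_i)-1}^{P_i}) \neq \last(H_2^{P_j})$ holds for all $0 \leq i < j$ on the hyperplanes of $\A_{(0,3)}$, the arrangement $\A$ is a- or $3$-monodromic. In particular, the only cyclic module besides copies of $R$ that can appear in the decomposition of $H_1(M(\A);R_t)$ is $R/(\varphi_3)$, and $3 = 3^1$ is a prime power. So $\A$ only admits monodromies of prime power order.

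Next I would invoke \cite{Bailet}: using the Papadima--Suciu upper bound for the torsion part of the characteristic variety \cite[Theorem C]{PS} together with the vanishing of the Aomoto complex of $\A$ when the graph of double points $\Gamma(\A)$ is connected \cite[Lemma 2.1]{Bailet}, the first author proved that Conjecture~\ref{conj:amon} holds whenever $\A$ admits only monodromies of order $p^k$ with $p$ prime; that is, in that case connectedness of $\Gamma(\A)$ forces $\A$ to be a-monodromic. By the first step this prime-power hypothesis is satisfied, and since by assumption $\Gamma(\A)$ is connected, we conclude that $\A$ is a-monodromic.

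Equivalently, one may phrase the last step as an exclusion argument: were $\A$ $3$-monodromic, then (as $3$ is prime) connectedness of $\Gamma(\A)$ together with \cite{Bailet} would force $\A$ to be a-monodromic, contradicting the presence of the summand $R/(\varphi_3)$; hence the $3$-monodromic alternative of Theorem~\ref{thm1.1} cannot occur and $\A$ is a-monodromic. The proof is thus essentially a matching of hypotheses, and there is no substantial obstacle; the only point worth checking carefully is that ``a- or $3$-monodromic'' indeed places $\A$ in the class of arrangements covered by the cited statement of \cite{Bailet}, namely that $R/(\varphi_3)$ being the unique possible non-trivial summand is exactly the prime-power condition required there.
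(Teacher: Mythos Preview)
Your proof is correct and follows exactly the same approach as the paper: the paragraph preceding the Corollary already explains that Theorem~\ref{thm1.1} places $\A$ in the a- or $3$-monodromic case, and then the result from \cite{Bailet} (via \cite[Theorem C]{PS} and \cite[Lemma 2.1]{Bailet}) applies since $3$ is a prime power, yielding a-monodromicity from the connectedness of $\Gamma(\A)$. The paper gives no separate proof beyond this remark, so your write-up is essentially a fleshed-out version of the authors' argument.
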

\begin{remark}The reciprocal is false, as shown in example \ref{example0}. Indeed, $H_2^{P_4}$ does not contain any double point and $\Gamma(\A)$ is not connected. 
Remark also that $\A$ has no multinet, since $\A$ is a-monodromic. 
\end{remark}

The rest of this section is devoted to prove our main result \ref{thm1}, consequence of Lemma's \ref{lem1} and \ref{propmono3last}

\begin{lemma}\label{lem1}
Let $\A$ in  $\R^2$ be a sharp arrangement such that $$\last(H_{m(P_i)-1}^{P_i}) \neq \last(H_2^{P_j}), \quad 0 \leq i <j $$
holds for hyperplanes in $\A_{0}$. If $H_h^{P_i} \in \A_{0}$ is a vertex in a cycle of $\mathcal{G}_{\A_{0},\last}$ as the one in Figure \ref{fig:cycle}, then $h=2$ or $h=m(P_i)-1.$ More precisely we have the cycle in Figure \ref{fig:cycleA0}. 
\end{lemma}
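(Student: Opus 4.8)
The plan is to analyze the cycle of Figure \ref{fig:cycle} as realized inside the specialized graph $\mathcal{G}_{\A_0,\last}$, using the complete list of admissible edges $E_1,\dots,E_6$ from Remark \ref{rk1} (Figure \ref{fig:edge_glast}). The first observation is that in $\mathcal{G}_{\A_0,\last}$ each hyperplane $H$ appears in exactly one vertex, namely $(H,\last(H))$, so a vertex of the cycle is simply a hyperplane $H_h^{P_i} \in \A_0$, and the hypothesis (\ref{cond:last}) guarantees $|\mathcal{V}_0(\mathcal{G}_{\A_{(0,3)},\last})| = |\A_{(0,3)}|$, which is what lets us talk about the cycle combinatorially. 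So the task reduces to: given the shape of the cycle in Figure \ref{fig:cycle} (a $\pol$-increasing upper path from $(\overline{H},\overline{P})$ up to $(H,P)$, a single $\nabla$ or $\Delta$ edge, then a $\rhd$-decreasing lower path), determine which hyperplanes of $\A_0$ can occur as interior vertices.

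The key step is a case analysis on the indices. From the edge list in Figure \ref{fig:edge_glast}, I would note that:
\begin{itemize}
\item an edge of type $E_1$ (the $\pol$ edge, $0 \le j < i$) has source $H_k^{P_j}$ and target $H_2^{P_i}$;
\item edges $E_2,E_3,E_5,E_6$ (the $\rhd$ edges, $0 \le i < j$) have source $H_k^{P_j}$ or $H_2^{P_j}$ and target $H_{m(P_i)-1}^{P_i}$ or $H_{m(P_i)-2}^{P_i}$;
\item edge $E_4$ (the $\rhd$ edge, $0 < j$) goes $H_k^{P_j} \to H_{k-1}^{P_j}$.
\end{itemize}
The heart of the argument is that any vertex $H_h^{P_i}$ of the cycle is both the target of an incoming edge of the cycle and the source of an outgoing edge of the cycle. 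Running through the possible (incoming, outgoing) pairs of edge types and imposing that the indices be consistent should force $h \in \{2, m(P_i)-1\}$ at every interior vertex. The only potentially dangerous vertex is one sitting on an $E_4$-chain, i.e. $H_{k-1}^{P_j}$ with $2 < k-1 < m(P_j)-1$; but the $E_4$-chain can only be entered from a vertex $H_k^{P_j}$ of the same point $P_j$, and the first vertex of such a chain inside $\A_0$ must itself have been reached from outside $P_j$, forcing $k = m(P_j)-1$ at entry and hence $h = 2$ or $h = m(P_i) - 1$ after the descent reaches the bottom. The auxiliary facts gathered in Remark \ref{rk1} items 1 and 4 (the parallelism constraints and the equality $\last(H_{m(P_i)-1}^{P_i}) = \last(H_2^{P_j})$ which is excluded by hypothesis) are exactly what eliminates the remaining configurations and pins down the structure of the cycle to the one drawn in Figure \ref{fig:cycleA0}.

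Once $h \in \{2, m(P_i)-1\}$ is established for each vertex, the more refined claim — that the cycle must literally be the one in Figure \ref{fig:cycleA0}, alternating between lines $H_2^{P_\bullet}$ and $H_{m(P_\bullet)-1}^{P_\bullet}$ along the $\rhd$ part and similarly on the $\pol$ part — follows by tracking which edge types connect a ``$2$-type'' vertex to a ``$(m-1)$-type'' vertex and vice versa, again reading off Figure \ref{fig:edge_glast}: $E_1$ sends any vertex to a $2$-type vertex, $E_2,E_5$ send a $2$- or $k$-type vertex to an $(m-1)$-type vertex, $E_4$ descends within one point, etc. The main obstacle I anticipate is the bookkeeping around points $P_i$ with $m(P_i)$ large, where $E_4$-chains are long and one must argue carefully that no interior index $h$ with $2 < h < m(P_i)-1$ survives; this is where the hypothesis (\ref{cond:last}) (applied to $\A_0$, not just $\A_{(0,3)}$) and the parallelism dichotomy of Remark \ref{rk1}(1) do the real work, since otherwise an $E_4$-chain could be entered in its middle. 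The excluded points of multiplicity $3$ are precisely those for which the distinction between $H_2^{P_i}$ and $H_{m(P_i)-1}^{P_i}$ degenerates, which is why the statement is clean only after removing them (this is the content of the subsequent Lemma \ref{propmono3last}).
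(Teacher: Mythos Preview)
Your proposal has a genuine gap rooted in a misreading of the cycle structure. You describe the cycle of Figure~\ref{fig:cycle} as having ``a $\rhd$-decreasing lower path,'' and this is what drives your entire case analysis on $(\text{incoming},\text{outgoing})$ pairs and your worry about $E_4$-chains. But go back to the proof of Lemma~\ref{lemmacycle}, cases (b) and (d): both the upper path $\overline{H}\to H_{i_k}\to\cdots\to H$ and the lower path $\widetilde{H}\to H_{i_{k+s}}\to\cdots\to H'$ are explicitly built from chains of row operations, and row operations always go from a smaller row to a larger one, so every one of those edges is a $\pol$-edge. The only two edges in the whole cycle that are not forced to be $\pol$-edges are $[H,\widetilde{H},\rhd]$ and $[H',\overline{H}]$. (The $\rhd$ labels on the lower path in the figure are misleading; read the text of Lemma~\ref{lemmacycle} instead.)

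With this in hand, the paper's argument is a single observation: in $\mathcal{G}_{\A_0,\last}$ the \emph{only} $\pol$-edge type is $E_1$, whose target is always $H_2^{P_i}$. Hence every vertex of the cycle except possibly $\overline{H}$ and $\widetilde{H}$ is automatically of the form $H_2^{P_\bullet}$ --- no pairwise bookkeeping needed, and $E_4$-chains never arise because $E_4$ is a $\rhd$-edge. The remaining two vertices $\overline{H},\widetilde{H}$ are targets of $\rhd$-edges whose sources are already known to be $H_2$-type; running through $E_2,E_3,E_5,E_6$ (and noting that $E_4$ from $H_2^{P_j}$ would land on $H_1^{P_j}=H_1^{P_0}\notin\A_0$) gives target $H_{m(P_i)-1}^{P_i}$ or $H_{m(P_i)-2}^{P_i}$, and the hypothesis~\eqref{cond:last} together with Remark~\ref{rk1}(4) kills the $E_6$ case $H_{m(P_i)-2}^{P_i}$.

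Your $E_4$-chain discussion, besides being unnecessary, is also incomplete as stated: you say the chain is entered at $H_{m(P_j)-1}^{P_j}$ and exits at $H_2^{P_j}$, but you never explain why the \emph{interior} vertices $H_h^{P_j}$ with $2<h<m(P_j)-1$ --- which would all sit on the cycle if such a chain were present --- don't already contradict the conclusion. Neither hypothesis~\eqref{cond:last} nor the parallelism facts in Remark~\ref{rk1}(1) bound the length of an $E_4$-chain, so your appeal to them at this point does not close the gap.
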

\begin{figure}[htbp]
\centering
\begin{tikzpicture}

\tikzset{vertex/.style = {shape=rectangle}}
\tikzset{edge/.style = {->,> = latex'}}
\node[vertex] (h) at  (12,0) {$H_2^{P_j}$};
\node[vertex] (h-h1) at  (10.5,0.4) {$\pol$};
\node[vertex] (h1) at  (9,0) {$H_2^{P_{j_1}}$};
\node[vertex] (h1-h2) at  (7.5,0.4) {$\pol$};
\node[vertex] (h2) at  (6,0) {$H_{2}^{P_{j_2}}$};

\node[vertex] (pointshaut) at  (4.5,0) {$\hdots$};

\node[vertex] (hk) at  (3.5,0) {$H_{2}^{P_{j_k}}$};\node[vertex] (hk-hbar) at  (2.3,0.4) {$\pol$};
\node[vertex] (hbar) at (0,0) {$H_2^{P_i}$ or $H_{m(P_i)-1}^{P_i}$};

\node[vertex] (h'-hbar) at  (-1,-1) {$\nabla \,\text{or}\,\Delta$};
\node[vertex] (h') at  (0,-2) {$H_2^{P_{j'}}$};
\node[vertex] (h'-hk1) at  (1.5,-2.4) {$\rhd$};
\node[vertex] (hk1) at  (3,-2) {$H_{2}^{P_{j_{k+1}}}$};
\node[vertex] (hk1-hk2) at  (4.5,-2.4) {$\rhd$};
\node[vertex] (hk2) at  (6,-2) {$H_2^{P_{j_{k+2}}}$};

\node[vertex] (pointsbas) at  (7.5,-2) {$\hdots$};

\node[vertex] (hks) at  (9,-2) {$H_{2}^{P_{j_{k+s}}}$};
\node[vertex] (hks-htilde) at  (10.5,-2.4) {$\rhd$};
\node[vertex] (htilde) at  (12,-2) {$H_{m(P_{\widetilde{j}})-1}^{P_{\widetilde{j}}}$};

\node[vertex] (h-htilde) at  (12.4,-1) {$\nabla$};


\draw[edge] (hbar.east)  to[bend left] (hk.west);
\draw[edge] (h2.east)  to[bend left] (h1.west);
\draw[edge] (h1.east)  to[bend left] (h.west);
\draw[edge] (h'.north)  to[bend left] (hbar.south);
\draw[edge] (hk1.west)  to[bend left] (h'.east);
\draw[edge] (hk2.west)  to[bend left] (hk1.east);
\draw[edge] (htilde.west)  to[bend left] (hks.east);
\draw[edge] (h.south)  to[bend left] (htilde.north);

\end{tikzpicture}
\caption{Cycles in $\mathcal{G}_{\A_{0},\last}$ }
\label{fig:cycleA0}
\end{figure}
 
Note that Lemma \ref{lem1} essentially states that the only edges involved in a cycle are edges of the form $E_1$ for $k=2, m(P_i)-1$, and $E_2$ (see Figure \ref{fig:edge_glast}).

\proof Let us consider the cycle $\gamma$ in Figure \ref{fig:cycle}. By Remark \ref{rk1} we know that any edge $[H_h^{P_i},H_k^{P_j},\pol]$ have to be of the form $E_1$, that is $k=2$. Then all vertices in $\gamma$ but $\overline{H}$ and $\widetilde{H}$ are of the form $H_2^{P_j}.$ Let us now deal with $\overline{H}$ and $\widetilde{H}$ and their edges in $\gamma$. If $\overline{H} \rhd H',$ then $\overline{H}$ is also of the form $H_2^{P_j}.$ Assume $\overline{H} \pol H'$, then we are in the case $H'=H_2^{P_j}$, $\overline{H}=H_h^{P_i}$ with $i<j$, and one of the following two situations apply to the edge $[H',\overline{H},\rhd]$:
\begin{enumerate}
\item $\overline{H} \in S(\last(H'))$ and by Remark \ref{rk1} 2. $\overline{H}= H_{m(P_i)-1}^{P_i}$. Since, by Remark \ref{rk1} 4., $\overline{H}=H_{m(P_i)-1}^{P_i}, \,0<i<j$, implies $\last(\overline{H})= \last(H')$, it follows that $\overline{H}= H_{m(P_0)-1}^{P_0}$;
\item $\overline{H} \in \bold{\widehat{U}}_{Max}(\last(H'))$ and, by $i<j$, we are in case Remark \ref{rk1} 3. (b) i., that is $\overline{H}=H_{m(P_i)-1}^{P_i},\,0<i<j,$ with $H_{m(P_i)}^{P_i}\parallel H'$ or $H_{m(P_i)}^{P_i} \in S(\last(H')),$ (edge $E_5$). Indeed, case 3. (b) ii. $\overline{H}=H_{m(P_i)-2}^{P_i}$ is only possible if $H_{m(P_i)-1}^{P_i} \in S(\last(H'))$ that is again, by Remark \ref{rk1} 4.,  $\last(H_{m(P_i)-1}^{P_i})= \last(H'),\, i<j$. 
\end{enumerate}
Similarily, the existence of the edge $[H,\widetilde{H},\rhd]$ implies by Remark \ref{rk1} that $\widetilde{H}=H_{m(P_i)-1}^{P_i}$ for a certain $i,\, 0 \leq i \leq |\mathcal{P}|.$\\
By denoting $H=H_2^{P_j}$, $\overline{H}=H_2^{P_i}$ or $H_{m(P_i)-1}^{P_i}$, $H'=H_2^{P_{j'}}$, $\widetilde{H}=H_{m(P_{\widetilde{j}})-1}^{P_{\widetilde{j}}}$ and $H_{i_k}=H_2^{P_{j_k}}$, we get the cycle in Figure \ref{fig:cycleA0} from the one in Figure \ref{fig:cycle}, recalling that we denote vertices of $\mathcal{G}_{\A_0,\last}$ only by hyperplanes $H$ omitting points in $H$ as they are always of the form $\last(H)$.
\endproof

\begin{lemma}\label{propmono3last}
Let $\A$ in $\R^2$ be a sharp arrangement such that $$\last(H_{m(P_i)-1}^{P_i}) \neq \last(H_2^{P_j}), \quad 0 \leq i <j $$ 
holds for hyperplanes in $\A_{0}$. If there exists a cycle in $\mathcal{G}_{\A_0,\last}$ as in Figure \ref{fig:cycleA0}, then it involves at least a vertex $H_h^{P_i}$ with $m(P_i)=3,$ i.e. $\mathcal{G}_{\A_{(0,3)},\last}$ doesn't contain any cycle and it is an oriented forest.
\end{lemma}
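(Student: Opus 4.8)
The plan is to analyze a cycle $\gamma$ in $\mathcal{G}_{\A_0,\last}$ of the shape in Figure \ref{fig:cycleA0} and derive a contradiction from the hypothesis that every vertex $H_h^{P_i}$ appearing on it has $m(P_i)\neq 3$; this will force the cycle to live in $\mathcal{G}_{\A_{(0,3)},\last}$, where Proposition \ref{thm1} (via Lemma \ref{lem1}, whose cycle classification we may freely invoke) already tells us no such cycle survives. First I would set up notation: write the top part of $\gamma$ as a descending chain of $\pol$-edges $H_2^{P_j}\pol H_2^{P_{j_1}}\pol\cdots\pol H_2^{P_{j_k}}\pol \overline{H}$ (all of type $E_1$ by Lemma \ref{lem1}), so that $j>j_1>\cdots>j_k$ and the last vertex $\overline{H}$ is either $H_2^{P_i}$ or $H_{m(P_i)-1}^{P_i}$; and the bottom part as an ascending chain of $\rhd$-edges from $H'=H_2^{P_{j'}}$ up through $H_2^{P_{j_{k+1}}},\dots,H_2^{P_{j_{k+s}}}$ and finally to $\widetilde H = H_{m(P_{\widetilde j})-1}^{P_{\widetilde j}}$, together with the two closing edges $[H,\widetilde H,\nabla]$ and $[H',\overline H,\nabla\text{ or }\Delta]$.

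The heart of the argument is a counting/monotonicity step on the indices $i$ of the points $P_i\in\mathcal{P}$ involved. Each edge type $E_1,\dots,E_6$ from Remark \ref{rk1} imposes an inequality between the $\mathcal P$-indices of its endpoints: $E_1$ gives $j<i$ going "forward" along the edge (the source point-index exceeds the target's), $E_2,E_3,E_5,E_6$ give $i<j$, and $E_4$ is a loop at a fixed point $P_j$ (and only exists when $m(P_j)>2$, in fact it requires the vertex $H_k^{P_j}$ with $1<k-1$, i.e. $m(P_j)\geq 3$). I would trace these inequalities around the cycle: the $E_1$-chain on top strictly decreases the point-index, the $\rhd$-chain ($E_2,E_3,E_5,E_6$) on the bottom strictly increases it, and the closing edges connect the two. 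Because the composite of the inequalities around a closed loop must be consistent, the only way to close up is if somewhere the strict inequality is replaced by an equality of $\last$-values, which by Remark \ref{rk1}(4) and Propositions \ref{prop2}, \ref{prop3} is exactly the forbidden situation $\last(H_{m(P_i)-1}^{P_i})=\last(H_2^{P_j})$ for $i<j$ — unless $m(P_i)=3$ for some $P_i$ on the cycle, which produces an $E_4$-type loop (or equivalently allows $H_{m(P_i)-2}^{P_i}\in\bold{\widehat U}_{Max}$, case $E_6$) that breaks the strict monotonicity. So: assuming no point on $\gamma$ has multiplicity $3$, none of $E_4$, $E_6$ can occur, the index must both strictly decrease and strictly increase around the loop, and we get $j<\cdots<j$, a contradiction. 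Hence some vertex sits on a point of multiplicity $3$, and the cycle is not contained in $\mathcal{G}_{\A_{(0,3)},\last}$; since $\A_{(0,3)}$ was obtained from $\A_0$ precisely by deleting all lines through multiplicity-$3$ points of $\mathcal P$, $\mathcal{G}_{\A_{(0,3)},\last}$ contains no cycle at all, i.e. it is an oriented forest.

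A subtlety I want to handle carefully is the role of $\overline H$: Lemma \ref{lem1} allows $\overline H=H_{m(P_i)-1}^{P_i}$ as well as $H_2^{P_i}$, and by Remark \ref{rk1}(4) whenever $\overline H=H_{m(P_i)-1}^{P_i}$ with $0<i<j$ and the incoming edge is $[H',\overline H,\rhd]$ one already has $\last(\overline H)=\last(H')$ — which is the forbidden coincidence unless it is ruled out by the hypothesis, forcing $\overline H=H_{m(P_0)-1}^{P_0}$ (the $i=0$ case, where the hypothesis is vacuous there). I would then observe that the $i=0$ vertex $H_{m(P_0)-1}^{P_0}$ can appear at most once (it is a single hyperplane), so it cannot by itself generate a cycle; any genuine cycle must revisit structure at an affine point $P_i$ with $i>0$, and the monotonicity contradiction above applies. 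The main obstacle is the bookkeeping of which of the six edge types can legitimately appear in each position of Figure \ref{fig:cycleA0} and verifying that, stripped of $E_4$ and $E_6$, the surviving edges genuinely force strict monotonicity of the $\mathcal P$-index with no escape; this is where I expect the bulk of the case analysis to lie, but it is entirely mechanical given Remark \ref{rk1} and Propositions \ref{prop1}–\ref{prop3}.
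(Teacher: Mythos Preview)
Your monotonicity-of-indices argument has a genuine gap. First, a clarification about the cycle structure: the $\rhd$ labels on the bottom row of Figure~\ref{fig:cycleA0} are inherited from Figure~\ref{fig:cycle}, but the proof of Lemma~\ref{lemmacycle}, case~(d), shows that those bottom edges are in fact $\pol$-edges (this is also why Lemma~\ref{lem1} can conclude that \emph{all} intermediate vertices are of the form $H_2^{P_\bullet}$: every $\pol$-edge is of type $E_1$). So both the top and the bottom chains consist of $E_1$-edges, along which the $\mathcal{P}$-index strictly increases in the arrow direction; the two closing edges $[H,\widetilde{H},\rhd]$ and $[H',\overline{H}]$ are where the index drops. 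There is therefore no cyclic inequality $j<\cdots<j$: the index can go up along each chain and down at each closing edge, which is perfectly consistent. Excluding $E_4$ and $E_6$ does not change this picture; your claimed contradiction simply does not materialise.

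What the paper actually uses, and what your proposal is missing, is a geometric coincidence argument: whenever two consecutive $E_1$-edges $[H_h^{P_j},H_2^{P_i},\pol]$ and $[H_2^{P_i},H_2^{P_k},\pol]$ with $j<i<k$ occur, sharpness of $\A$ forces $\last(H_2^{P_i})=\last(H_h^{P_j})$ (the conditions $H_2^{P_k}\in S(\last(H_2^{P_i}))$ and $H_2^{P_i}\in S(\last(H_h^{P_j}))$ together pin the two last points to the same intersection). Proposition~\ref{prop2} combined with the hypothesis then gives $h=2$ and $m(P_j)=3$. This single observation disposes of every cycle in which either $E_1$-chain has length $\geq 2$; what remains are cycles of length $2$, $3$, or $4$, which the paper treats by a short direct case analysis (its Cases~A and~B), again producing either a forbidden $\last$-coincidence or a multiplicity-$3$ vertex. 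Your index bookkeeping does not substitute for this geometric step, and without it the argument does not close.
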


\proof
Let us consider three hyperplanes $H_h^{P_j} \pol H_2^{P_i} \pol H_2^{P_k}$ in $\A_0$ with $0 \leq j< i<k$ connected as in Figure \ref{fig:cycleA0} by edges $[H_h^{P_j},H_2^{P_i},\pol],[H_2^{P_i},H_2^{P_k},\pol]$ that is, by Remark \ref{rk1}, $H_2^{P_k}\in S(\last(H_2^{P_i}))$ and $H_2^{P_i}\in S(\last(H_h^{P_j})).$ It is an easy geometric remark that, on sharp arrangements, this configuration forces $\last(H_2^{P_i})=\last(H_h^{P_j})$ and it follows by Propositions \ref{prop2} and by our assumptions on $\last$ that $h=2$ and $m(P_j)=3$. \\
Hence the only cases left are the cycles with no $2$ consecutive edges of the form $[H'',H', \pol],[H',H,\pol]$, that is, by Lemma \ref{lem1} (see Figure \ref{fig:cycleA0}), our cycle is composed by two edges: $$[H,H',\rhd] \mbox{ and }[H',H,\pol] \quad \mbox{(case A)};$$ 
or four edges: $$[H,\widetilde{H},\rhd],[\widetilde{H},H',\pol],[H',\overline{H},\rhd],[\overline{H},H,\pol] \quad \mbox{(case B)};$$ 
or three edges:
$$[H,\widetilde{H},\rhd],[\widetilde{H},H'],[H',H,\pol],$$
where $\widetilde{H}=H_{m(P_{\widetilde{j}})-1}^{P_{\widetilde{j}}}=H_2^{P_{j'}}$ in Figure \ref{fig:cycleA0}, that is $P_{j'}=P_{\widetilde{j}}$ has multiplicity 3.

Note that in the above cases $H=H_2^{P_j}$. Let us study cases A and B separately:
\begin{enumerate}
\item[A.] $H \in S(\last(H'))$ that is, by Remark \ref{rk1}, $H'=H_{m(P_0)-1}^{P_0} \in S(\last(H))$ or $H'=H_{m(P_i)-1}^{P_i} \in \bold{\widehat{U}}_{Max}(\last(H)),\,0<i<j.$ The first configuration is an absurd since in this case $\last(H)=\last(H').$ In the second configuration, the fact that $H'\in \Cone(\last(H))$ implies the existence of a line $H'' \pol H'$ such that $H'' \in S(\last(H)).$ It is obvious that $H''$ has to be parallel with $H'$ as $H \in \last(H')$, that is $m(P_i)-1=2$ and $m(P_i)=3.$
\item[B.] This case corresponds to the cycle in Figure \ref{fig:cycle4} and we have the following two cases:
\begin{figure}[htbp]
\centering
\begin{tikzpicture}

\tikzset{vertex/.style = {shape=rectangle}}
\tikzset{edge/.style = {->,> = latex'}}
\node[vertex] (h1) at  (9,0) {$H=H_2^{P_j}$};
\node[vertex] (h1-h2) at  (7.5,0.4) {$\pol$};
\node[vertex] (h2) at  (6,0) {$\overline{H}=H_{m(P_i)-1}^{P_i}$};

\node[vertex] (h'-hbar) at  (5,-1) {$\Delta$};
\node[vertex] (hk2) at  (6,-2) {$H'=H_2^{P_{j'}}$};
\node[vertex] (hks) at  (9,-2) {$\widetilde{H}=H_{m(P_{\widetilde{j}})-1}^{P_{\widetilde{j}}}$};
\node[vertex] (hk2-hks) at  (7.5,-2.4) {$\rhd$};
\node[vertex] (h-htilde) at  (9.4,-1) {$\nabla$};


\draw[edge] (h2.east)  to[bend left] (h1.west);
\draw[edge] (hk2.north)  to[bend left] (h2.south);
\draw[edge] (hks.west)  to[bend left] (hk2.east);
\draw[edge] (h1.south)  to[bend left] (hks.north);

\end{tikzpicture}
\caption{Case B}\label{fig:cycle4}
\end{figure}

\begin{enumerate}
\item $\overline{H}=H_{m(P_0)-1}^{P_0} \in S(\last(H')).$ Then the egde $[\overline{H},H,\pol]$ means that $H\in S(\last(\overline{H})).$ By assumption we have that $H \rhd H'.$ Since $\last(\overline{H}) \rhd_= \last(H'),$ we easily see that either $\last(H')\in H$ and $\last(\overline{H})=\last(H'),$ not possible by hypothesis, or $H\parallel H'$ and $m(H' \cap H_1^{P_0})=2$, i.e. $H' \notin \A_0.$ 
\item $\overline{H}=H_{m(P_i)-1}^{P_i} \in \bold{\widehat{U}}_{Max}(\last(H')),\,0<i<j.$ The fact that $\overline{H}\in \Cone(\last(H'))$ implies the existence of a line $H'' \pol \overline{H}$ such that $H'' \in S(\last(H')).$ If $H'' \parallel \overline{H},$ then $m(P_i)-1=2$ and $m(P_i)=3.$ Otherwise, $H'' \cap \overline{H} \neq \emptyset$ and $\last(\overline{H}) \rhd_= H'' \cap \overline{H}.$ On the other hand, the edge $[\overline{H},H,\pol]$ means that $H\in S(\last(\overline{H}))$ and since $H' \pol H$ we easily see that either $H \parallel H'$ and $H' \notin \A_0$ or $H \cap H' \rhd \last(H')$ along $H',$ wich is an absurd.
\end{enumerate}
\end{enumerate}
\endproof

Let us remark that in Proposition \ref{thm1} and Theorem \ref{thm1.1} we focused on the arrangement $\A_{(0,3)} \subset \A_0$ since our main goal is to show Conjecture \ref{conj:mon}. But above Lemma's are true more in general if condition (\ref{cond:last}) holds in $\A_0$. Hence our algorithm also provides a way to show a-monodromicity directly via the following result.
\begin{proposition}\label{thm0}Let $\A$ in $\R^2$ be a sharp arrangement such that
$$\last(H_{m(P_i)-1}^{P_i}) \neq \last(H_2^{P_j}), \quad 0 \leq i <j $$
holds for hyperplanes in $\A_0$ and $\mathcal{G}_{\A_0,\last}$ doesn't have any cycle as the one in Figure \ref{fig:cycleA0}, then $\mathcal{G}_{\A_{0},\last}$ is an oriented forest and $\A$ is a-monodromic.
\end{proposition}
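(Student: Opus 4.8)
The plan is to deduce the statement by feeding the structural results just proved for $\mathcal{G}_{\A_0,\last}$ into the chain of reductions of Section~\ref{sec:sharp}; the only genuinely new ingredient is that condition (\ref{cond:last}) is now assumed on the whole of $\A_0$ rather than only on $\A_{(0,3)}$, and this is what I would exploit first.

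\emph{First step: $\last$ is injective on $\A_0$.} Suppose $H\neq H'$ in $\A_0$ satisfy $\last(H)=\last(H')$. By Proposition~\ref{prop1}, $H$ and $H'$ lie in $\{H_2^{P_i},H_{m(P_i)-1}^{P_i}\}_{1\leq i\leq|\mathcal{P}|}\cup\{H_{m(P_0)-1}^{P_0}\}$, so the coincidence is covered by one of the cases of Proposition~\ref{prop2} (when one of the two is an $H_2^{P_i}$), of Proposition~\ref{prop3} (when one of the two is an $H_{m(P_i)-1}^{P_i}$), or of Remark~\ref{rk1} together with Proposition~\ref{prop2} (for $H_{m(P_0)-1}^{P_0}$). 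In each of those cases one reads off an equality $\last(H_{m(P_a)-1}^{P_a})=\last(H_2^{P_b})$ with $a<b$: either directly, or — in the cases carrying a multiplicity-$3$ hypothesis — after using $H_{m(P)-1}^{P}=H_2^{P}$ for the triple point $P$ on the sharp line. Since all hyperplanes involved lie in $\A_0$, this contradicts (\ref{cond:last}). Hence $\mathcal{G}_{\A_0,\last}$ has exactly $|\A_0|$ vertices, any two distinct vertices $(H,\last(H))\neq(H',\last(H'))$ have $H\neq H'$ and $\last(H)\neq\last(H')$, and — distinct last points lying in distinct column blocks $c^{P}$ — the pivot columns $c_H^{\last(H)}$ of Notation~\ref{not:goodcolumn} are pairwise distinct.

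\emph{Second step: $\mathcal{G}_{\A_0,\last}$ is an oriented forest.} By Lemma~\ref{lem1}, every cycle of $\mathcal{G}_{\A_0,\last}$ of the shape of Figure~\ref{fig:cycle} is in fact of the restricted shape of Figure~\ref{fig:cycleA0}; therefore the hypothesis that $\mathcal{G}_{\A_0,\last}$ contains no cycle as in Figure~\ref{fig:cycleA0} forces $\mathcal{G}_{\A_0,\last}$ to contain no cycle of the shape of Figure~\ref{fig:cycle} at all, i.e. it is an oriented forest. \emph{Third step: assembling the reductions.} Starting from $Mat(\partial_2)$, recall from Section~\ref{inclusion} that the row $H_1^{P_0}$ can be deleted without changing the cyclic modules; from Remark~\ref{rkdiag} and the discussion around (\ref{eq:remove}) that every hyperplane of $\A_r$ is removed; and from Lemma~\ref{removeHP0last} that the rows $\{H_k^{P_0}\}_{1<k<m(P_0)-1}$ are removed, using the columns $c_{H_k^{P_0}}^{\last(H_k^{P_0})}$, without disturbing any column $c_H^{\last(H)}$ with $H\in\A'$. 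This leaves the submatrix on the rows $H\in\A_0$ and the columns $c_H^{\last(H)}$. By the first step this submatrix is indexed by a vertex set of $\mathcal{G}_{\A_0,\last}$ in bijection with $\A_0$ with distinct hyperplane- and point-components, and by the second step $\mathcal{G}_{\A_0,\last}$ is cycle-free; so by Remark~\ref{rkamono} (the contrapositive of Lemma~\ref{lemmacycle}) it triangularizes with all pivots $1-t$, i.e. every $H\in\A_0$ is removed (Definition~\ref{def:remove}) by operations along $\mathcal{G}_{\A_0,\last}$. Since $\{H_1^{P_0}\}\cup\A_r\cup\{H_k^{P_0}\}_{1<k<m(P_0)-1}\cup\A_0$ exhausts $\A$, all hyperplanes of $\A$ are removed, and Remark~\ref{rk:removetrivial} then gives that $Mat(\partial_2)$ is triangularized with pivots $1-t$ and rows in polar order, so $H_1(M(\A);R_t)\simeq\C^n$ and $\A$ is a-monodromic.

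The hard part is the first step: one must check that imposing (\ref{cond:last}) on the full $\A_0$ really kills \emph{all} the coincidences $\last(H)=\last(H')$ enumerated in Propositions~\ref{prop1}--\ref{prop3}, including the ones that in the $\A_{(0,3)}$ setting are excluded a priori because they involve a triple point on the sharp line — this is exactly where the identity $H_{m(P)-1}^{P}=H_2^{P}$ for $m(P)=3$ is used, to rewrite each such coincidence as an instance of the forbidden equality in (\ref{cond:last}). The remaining bookkeeping — that the several elimination steps do not interfere, which is granted by Lemma~\ref{removeHP0last} and Theorem~\ref{thmeffect}, and that distinct last points give distinct pivot columns — is routine.
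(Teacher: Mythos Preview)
Your proof is correct and follows essentially the same route the paper leaves implicit: the paper states Proposition~\ref{thm0} as a direct consequence of the preceding machinery (``above Lemma's are true more in general if condition (\ref{cond:last}) holds in $\A_0$'') without spelling out a proof, and your three steps are precisely the details that justify this. In particular, your first step --- using $H_{m(P)-1}^{P}=H_2^{P}$ when $m(P)=3$ to reduce every case of Propositions~\ref{prop2}--\ref{prop3} to an instance of (\ref{cond:last}) --- is the one verification the paper does not write out but tacitly relies on when passing from $\A_{(0,3)}$ to $\A_0$.
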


\begin{example}\label{example0}
Let us consider the sharp arrangement $\A$ in $\R^2$ depicted in Figure \ref{fig:example0}.
The cardinality of $\overline{\A}$ is 12 and $\A_0=\{H_2^{P_0},H_2^{P_4}\},$ since $m(P_1)=m(P_2)=m(P_3)=m(P_5)=2$ and $m(P_6)=4$ is coprime with $m(P_0)=3.$ Since $\last(H_2^{P_0}) \neq \last(H_2^{P_4})$ and the graph $\mathcal{G}_{\A_{0},\last}$ is composed of two non connected vertices, we have that $\A$ is a-monodromic by Proposition \ref{thm0}.

\end{example}

\begin{figure}[htbp]
\centering
\begin{tikzpicture}[scale=1]

\draw (0.05,0) node [right] {$H_1^{P_0}$} --(0.05,9);
\draw (-1.4,0) node [right] {$H_2^{P_0}$} --(-1.5,9);

\draw (1,2.25) node [right] {$H_2^{P_2}$} --(-6,3.75);

\draw (1,3.4) node [right] {$H_2^{P_3}$} --(-6,4.9);

\draw (1,4.7) node [right] {$H_2^{P_4}$} --(-6,6.2);
\draw (1,5.2) node [right] {$H_3^{P_4}$} --(-6,2.85);

\draw (1,6.9) node [right] {$H_2^{P_5}$} --(-5.5,1.3);

\draw (1,8) node [right] {$H_2^{P_6}$} --(-6,2);
\draw (1,8.5) node [right] {$H_3^{P_6}$} --(-5,0.4);
\draw (0.74,8.5) node [above] {$H_4^{P_6}$} --(-3.8,0);

\draw (0.5,1) node [right] {$H_2^{P_1}$} --(-3.8,8.4);

\end{tikzpicture}
\caption{$\A$ is a-monodromic}
\label{fig:example0}
\end{figure}

Let us now study the case in which we have $\last(H_{m(P_i)-1}^{P_i}) = \last(H_2^{P_j}),$ for certain $0 \leq i < j.$ We will consider a different subgraph: $\mathcal{G}_{\A'_0,\last, \mini}$ involving different vertices.

\subsection{The subgraph $\mathcal{G}_{\A'_0,\last, \mini}$}\label{subgraph2}
 
 Analogously to the case of graph $\mathcal{G}_{\A_0,\last}$, as a first step we will simplify rows $H_k^{P_0}$ with the following Lemma. 

\begin{lemma}Rows $\{H^{P_0}_k\}_{2< k <m(P_0)-1}$ can be removed in the sense of Definition \ref{def:remove} without changing columns $c_H^{\last(H)}$, $H \in \A' \setminus \{H_2^{P_i}\}_{0 \leq i \leq |\mathcal{P}|}$, and columns $c_H^{\mini(H)}$, $H \in \{H_2^{P_i}\}_{0 \leq i \leq |\mathcal{P}|}$.
\end{lemma}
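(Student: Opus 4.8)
The plan is to mimic very closely the argument of Lemma~\ref{removeHP0last}, replacing the r\^ole of the last points by a mixture of last points (for most hyperplanes) and second points (for the hyperplanes of the form $H_2^{P_i}$), and then invoking the structural Lemmas~\ref{up0}, \ref{up0h1}, together with Remarks~\ref{simplification1} and~\ref{rksimplast}, to check that the removal of the rows $H_k^{P_0}$, $2<k<m(P_0)-1$, leaves exactly the relevant columns untouched.

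First I would recall from Remark~\ref{eliminationP0} that after performing the operations~(\ref{op:P0}) the only non-zero entries of the row $H_k^{P_0}$, $1<k<m(P_0)-1$, sit in the column blocks $c^{P}$ with $H_k^{P_0}\in S(P)\cup\bold{\widehat{U}}_{Max}(P)$; in particular, for $k>2$, using Remark~\ref{lastmininfty}(1), the point $\last(H_k^{P_0})$ is not equal to $\last(H)$ nor to $\mini(H)$ for any $H\neq H_k^{P_0}$, so $H_k^{P_0}$ really has its own ``private'' last point. Then I would remove the rows $H_k^{P_0}$, $2<k<m(P_0)-1$, one at a time in polar order, using the column $c_{H_k^{P_0}}^{\last(H_k^{P_0})}$ from Notation~\ref{not:goodcolumn}: the diagonal entry there is of the form $t^\alpha(1-t)$, all entries above it (rows $H'\pol H_k^{P_0}$) vanish, and the row operations $O_{H_k^{P_0},H'}^{\,\cdot}$ needed to clear the entries with $H'\rhd H_k^{P_0}$ only touch columns $c^{P}$ with $H_k^{P_0}\in S(P)\cup\bold{\widehat{U}}_{Max}(P)\cup\mathcal N(P)$, exactly as in the proof of Lemma~\ref{removeHP0last}.

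The key verification, and the step I expect to be the main obstacle, is to show that \emph{none} of these column blocks $c^{P}$ with $H_k^{P_0}\in S(P)\cup\bold{\widehat{U}}_{Max}(P)$ (for $k>2$) coincides with a ``protected'' column $c_H^{\last(H)}$ with $H\in\A'\setminus\{H_2^{P_i}\}$, nor with a ``protected'' column $c_H^{\mini(H)}$ with $H\in\{H_2^{P_i}\}$. For the last-point columns this is precisely Remark~\ref{rksimplast}(1): since $H_k^{P_0}\in S(\last(H))$ forces $k=m(P_0)-1$, no $H_k^{P_0}$ with $k>2$ lies in $S(\last(H))\cup\bold{\widehat{U}}_{Max}(\last(H))\cup\mathcal N(\last(H))$. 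For the second-point columns $c_H^{\mini(H)}$ with $H=H_2^{P_i}$, this is Remark~\ref{rksimplast}(2): $\mini(H_2^{P_i})$ is at most $H_2^{P_i}\cap H_2^{P_0}$ along $H_2^{P_i}$, so for every $k>2$ one has $H_k^{P_0}\notin S(\mini(H_2^{P_i}))\cup\bold{\widehat{U}}_{Max}(\mini(H_2^{P_i}))\cup\mathcal N(\mini(H_2^{P_i}))$. (Note that here we do \emph{not} need to protect the columns $c_{H_2^{P_i}}^{\last(H_2^{P_i})}$, which is exactly why we are allowed to keep the row $H_2^{P_0}$ in this subsection, in contrast with Lemma~\ref{removeHP0last}; a short remark will flag that distinction.)

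Putting these together, each removal of a row $H_k^{P_0}$, $2<k<m(P_0)-1$, via its private column $c_{H_k^{P_0}}^{\last(H_k^{P_0})}$ leaves all the columns $c_H^{\last(H)}$ with $H\in\A'\setminus\{H_2^{P_i}\}$ and all the columns $c_H^{\mini(H)}$ with $H\in\{H_2^{P_i}\}$ unchanged, and by Remark~\ref{lastmininfty}(1) these private columns are pairwise distinct and distinct from all the protected columns, so the removals do not interfere with each other either. This is exactly the statement of the Lemma. \qed
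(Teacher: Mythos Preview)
Your proposal is correct and follows essentially the same approach as the paper's own proof: remove each $H_k^{P_0}$, $2<k<m(P_0)-1$, via its column $c_{H_k^{P_0}}^{\last(H_k^{P_0})}$, invoke Remarks~\ref{simplification1} and~\ref{rksimplast} to see that the required row operations leave the protected columns $c_H^{\last(H)}$ (for $H\neq H_2^{P_i}$) and $c_{H_2^{P_i}}^{\mini(H_2^{P_i})}$ untouched, and use Remark~\ref{lastmininfty}(1) to conclude that the points $\last(H_k^{P_0})$ are distinct from these protected last and mini points. Your write-up is in fact somewhat more detailed than the paper's, but the argument is the same.
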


\proof In order to simplify rows $H_k^{P_0},\, 2<k<m(P_0)-1,$ in the matrix $D(M),$ let us consider columns $c_{H_k^{P_0}}^{\last(H_k^{P_0})}$ defined in Notation \ref{not:goodcolumn}. It is clear that $e(H',c_{H_k^{P_0}}^{\last(H_k^{P_0})})=0$ for all $H'\pol H_k^{P_0}.$ In order to remove the entries $e(H',c_{H_k^{P_0}}^{\last(H_k^{P_0})})\neq 0$ with $H'\rhd H_k^{P_0},$ we perform usual rows operations by using the entry $e(H_k^{P_0},c_{H_k^{P_0}}^{\last(H_k^{P_0})})=t^\alpha(1-t).$ By Remarks \ref{simplification1} and \ref{rksimplast} these row operations do not affect the other columns $c_H^{\last(H)},\,H\neq H_2^{P_i}$, and $c_{H_2^{P_i}}^{\mini(H_2^{P_i})}$ of $D(M).$

Finally, we have that the last points $\last(H_k^{P_0}),\, 2<k<m(P_0)-1,$ are different from the $\last(H),\,H\neq H_2^{P_i},$ and the $\mini(H_2^{P_i})$ (see Remark \ref{lastmininfty} 1.).
\endproof

For the sake of simplicity, in the rest of this section we will assume $m(P_0) > 3$. This choice is essentially due to avoid case $H_{m(P_0)-1}=H_2^{P_0}$ in which additional considerations would be necessary in order to decide wether to consider the last or the min point along this line.
This condition is not strong one from our point of view as our main goal is to prove Conjecture \ref{conj:mon}. Note that now the line $H_2^{P_0}$ cannot be removed anymore hence we will deal with the arrangement $\A'_0$ defined in Notation \ref{not:arrnozero}.   

\begin{definition}\label{def:grafGLM}
We define $\mathcal{G}_{\A'_0,\last, \mini}$ the subgraph of $\mathcal{G}_{\A'}$ such that vertices are of the form $(H_2^{P_i},\mini(H_2^{P_i}))$, $ 0 \leq i \leq |\mathcal{P}|$, and  $(H,\last(H))$ if $H \neq H_2^{P_i}$, $H \in \A'_0$.
\end{definition}

By definition of $\mathcal{G}_{\A'_0,\last, \mini}$ it follows that its subgraph involving vertices $(H,\last(H))$ if $H \neq H_2^{P_i}$, $H \in \A'_0$, is a subgraph of $\mathcal{G}_{\A_0,\last}$. Then we need only to study edges connecting the new vertices $(H_2^{P_i},\mini(H_2^{P_i}))$, $ 0 \leq i \leq |\mathcal{P}|$. 

\begin{remark}\label{rklasth2}
Let $H_k^{P_j} \in \A'$ such that $k \neq 2.$ Then the following are easy geometric remarks on sharp arrangements:
\begin{enumerate}
\item if $H_2^{P_0}\in S(\last(H_k^{P_j})),$ then $m(P_0)=3$;
\item $H_2^{P_0}\in \bold{\widehat{U}}(\last(H_k^{P_j})) \Leftrightarrow H_{m(P_0)-1}^{P_0} \in S(\last(H_k^{P_j})).$   
\end{enumerate}
If $H_2^{P_0} \neq H_{m(P_0)-1}^{P_0}$, i.e. $m(P_0) >3$, by Definition \ref{def:Uppmax} of $\bold{\widehat{U}}_{Max}(\last(H_k^{P_j}))$ it follows: 
$$H_2^{P_0} \notin S(\last(H_k^{P_j})) \cup \bold{\widehat{U}}_{Max}(\last(H_k^{P_j})) \cup \mathcal{N}(\last(H_k^{P_j})), \mbox{ for all } H_k^{P_j} \in \A'_0 .$$ 
\end{remark}

\begin{remark}\label{rkmin}With usual notations, for points $\mini(H), H \in \A'$, we have the following facts, consequence of $\A$ being a sharp arrangment:
\begin{enumerate}
\item if $H_h^{P_i} \in S(\mini(H_2^{P_j}))$ then 
$\left \{ \begin{array}{cc} 
h=2 & \mbox{ if } i<j \\
h=m(P_i) & \mbox{ if } i>j \\
\end{array}
\right .$\\
that is, as $H_{m(P_i)}^{P_i} \notin \A'$, this corresponds to edges  $E_7=[H_2^{P_j},H_2^{P_i},\rhd]$ (i.e. $j>i \geq 0$). 
\item If $H_h^{P_i}\in \bold{\widehat{U}}_{Max}(\mini(H_2^{P_j}))$ then $j<i$ and $h=m(P_i)$, i.e. $H_{h}^{P_i} \notin \A'$,  or $h=m(P_i)-1$ and $H_{m(P_i)}^{P_i}\in S(\mini(H_2^{P_j})).$ The corresponding edges are $E_{8}=[H_2^{P_j},H_{m(P_i)-1}^{P_i},\pol]$ (i.e. $0 \leq j <i$ ).
\end{enumerate}
\end{remark}

\begin{figure}[htbp]
\centering
\begin{tikzpicture}

\tikzset{vertex/.style = {shape=rectangle}}
\tikzset{edge/.style = {->,> = latex'}}
\node[vertex] (h) at  (12.5,0) {$H_{k-1}^{P_j}$};
\node[vertex] (h-h1) at  (11,0.6) {$\rhd$};
\node[vertex] (h1) at  (9.5,0) {$H_k^{P_{j}}$};
\node[vertex] (h11-h2) at  (6,0.6) {$\rhd$};
\node[vertex] (h2) at  (4.5,0) {$H_{2}^{P_{j}}$};
\node[vertex] (h11) at  (7.8,0) {$H_{2}^{P_{i}}$};

\node[vertex] (hk) at  (3,0) {$H_{2}^{P_{i}}$};
\node[vertex] (hk-hbar) at  (1.5,0.6) {$\pol$};
\node[vertex] (hbar) at (0,0) {$H_k^{P_j}$ };

\node[vertex] (hk1) at  (4.5,-2) {$H_{k}^{P_j}$};
\node[vertex] (hk1-hk2) at  (6,-1.5) {$\rhd$};
\node[vertex] (hk2) at  (7.8,-2) {$H_{m(P_i)-1}^{P_{i}}$};


\node[vertex] (hks1) at  (0,-2) {$H_{2}^{P_{j}}$};
\node[vertex] (hks1-htilde1) at  (1.3,-1.5) {$\pol$};
\node[vertex] (htilde1) at  (3,-2) {$H_{m(P_i)-1}^{P_{i}}$};



\draw[edge] (hbar.east)  to[bend left] (hk.west);
\draw[edge] (h2.east)  to[bend left] (h11.west);
\draw[edge] (h1.east)  to[bend left] (h.west);
\draw[edge] (hk1.east)  to[bend left] (hk2.west);
\draw[edge] (hks1.east)  to[bend left] (htilde1.west);


\node[vertex] (hk-hbar) at  (1.5,-0.5) {\scriptsize{$E_1: 0 \leq j < i, k \neq 2$}};
\node[vertex] (h11-h2) at  (6,-0.5) {\scriptsize{$E_7: 0 \leq i < j$}};
\node[vertex] (h-h1) at  (11,-0.5) {\scriptsize{$E_4: 0 < j$}};
\node[vertex] (hk1-hk2) at  (6,-2.7) {\scriptsize{$E_3 \mbox{ and } E_5: 0 \leq i < j, k \neq 2$}};
\node[vertex] (hks1-htilde1) at  (1.5,-2.7) {\scriptsize{$E_8: 0 \leq j < i$}};

\end{tikzpicture}
\caption{Edges in $\mathcal{G}_{\A'_0,\last, \mini}$ }
\label{fig:edge_glmin}
\end{figure}

By previous remarks and by Remark \ref{rk1} we have that all possible edges in
$\mathcal{G}_{\A'_0,\last, \mini}$ are the one depicted in Figure \ref{fig:edge_glmin}. Let us remark that, with respect the subgraph $\mathcal{G}_{\A_0,\last}$, new edges of the form $E_7$ and $E_8$ appeared (see Figure \ref{fig:edge_glast}) while $E_2$ and $E_6$ disappeared. The latter follows from the fact that smaller lines of type $H_{m(P_i)-1}^{P_i}$ and $H_{m(P_i)-2}^{P_i}$ cannot be either in the $S(\mini(H_2^{P_j}))$ nor in  $\bold{\widehat{U}}_{Max}(\mini(H_2^{P_j}))$ for $j>i$ for obvious geometric reasons as $\A$ is sharp. For the same reason, type $E_1$ and $E_3,E_5$ edges require condition $k \neq 2$ (otherwise we would have points of multiplicity 2 or 3). We also have the following two easy facts: 

\begin{enumerate}
\item if $\last(H_h^{P_i})=\mini(H_2^{P_j}),$ then $H_h^{P_i}\in S(\mini(H_2^{P_j}))$ and by Remark \ref{rkmin} we have that $h=2$ and $i<j$, the case $h=m(P_i)$ being impossible since $H_h^{P_i}\in \A'$. In particular, if $\last(H_{m(P_0)-1}^{P_0})=\mini(H_2^{P_j}),$ then $m(P_0)=3;$
\item if $\mini(H_2^{P_i})=\mini(H_2^{P_j}),$ then 
\begin{enumerate}
\item $i<j \Rightarrow m(P_j)=2$ and $H_2^{P_j}\notin \A'$,
\item $j<i \Rightarrow m(P_i)=2$ and $H_2^{P_i}\notin \A'$;
\end{enumerate}
\end{enumerate}

In the rest of this subsection, Theorem, Corollary, Lemma and Proposition analogous to the one in previous subsection are stated and proved. Define
\begin{equation}\label{eq:A034}
\A'_{(0,3,4)}= \A'_0 \backslash \{H_h^{P_i}\,|\,P_i\in \mathcal{P} \,\text{and}\,m(P_i)\in\{3,4\}\},
\end{equation}
then the following result holds. 

\begin{proposition}\label{thm2}
Let $\A$ in $\R^2$ be a sharp arrangement. If $\mathcal{G}_{\A'_{(0,3,4)},\last,\mini}$ does not contain any cycle of length $l$ as the one in Figure \ref{fig:cycle} such that
\begin{itemize}
\item $\overline{H}\rhd H'$ and $l$ is odd
\item $\overline{H}\pol H'$ and $l$ is even
\end{itemize} 
then the matrix associated to the graph $\mathcal{G}_{\A'_{(0,3,4)},\last,\mini}$ can be diagonalized in $\diag(1-t).$
\end{proposition}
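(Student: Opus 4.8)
The plan is to follow the template of the proof of Proposition~\ref{thm1}. Write $\delta:=\mathcal{G}_{\A'_{(0,3,4)},\last,\mini}$, and let $M'$ be the submatrix whose rows are the hyperplanes $H\in\A'_{(0,3,4)}$ and whose columns are the key columns $c_H^{P}$ attached (via Notation~\ref{not:goodcolumn}) to the vertices of $\delta$, i.e. $P=\mini(H_2^{P_i})$ when $H=H_2^{P_i}$ and $P=\last(H)$ otherwise; here $M'$ is read inside the matrix obtained from $M$ after the normalisations of Section~\ref{sec:sharp} (removal of $H_1^{P_0}$, diagonalisation of $M_1$ by the operations $O^P_{i+1,i}(t^{-1})$, the operations $O^{P_0}_{m(P_0)-1,k}$ of \eqref{op:P0}, and the removal of the rows $\{H^{P_0}_k\}_{2<k<m(P_0)-1}$ carried out in Section~\ref{subgraph2}). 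By the opening discussion of Section~\ref{sec:mains} the rows in $\A_r$ of \eqref{eq:remove} — in particular all double points and the last lines $H_{m(P)}^P$ — have already been brought to the form of Definition~\ref{def:remove} by the $\gcd$ argument of Remark~\ref{rkdiag}, and the multiplicity $3$ and $4$ points discarded in $\A'_{(0,3,4)}$ (see \eqref{eq:A034}) contribute only blocks $\diag(1-t^{m})$, $m\in\{3,4\}$. Hence it suffices to show that $M'$ triangularises with every pivot equal to $1-t$.

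First I would check that $\delta$ is an admissible subgraph in the sense of Remark~\ref{rkamono}: it has exactly $|\A'_{(0,3,4)}|$ vertices, its first coordinates are pairwise distinct by construction, and its second coordinates (the intersection points) are pairwise distinct too. The latter is a short case analysis on collisions of the points $\last(H)$ and $\mini(H_2^{P_i})$. Collisions $\last(H)=\last(H')$ among plain vertices were classified in Propositions~\ref{prop1}, \ref{prop2} and \ref{prop3}, and — apart from the family $\last(H_{m(P_i)-1}^{P_i})=\last(H_2^{P_j})$, which is now harmless precisely because the vertex attached to $H_2^{P_j}$ uses $\mini$ and not $\last$ — each of them forces some $m(P)=3$, so the offending line is absent from $\A'_{(0,3,4)}$. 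The collisions $\mini(H_2^{P_i})=\mini(H_2^{P_j})$ force $m(P_i)=2$ or $m(P_j)=2$, and the collisions $\mini(H_2^{P_j})=\last(H)$ with $H\neq H_2^{P_k}$ force $H=H_2^{P_k}$: both are impossible, by the easy facts recorded after Remark~\ref{rkmin} (and we use $m(P_0)>3$, so $H_2^{P_0}\neq H_{m(P_0)-1}^{P_0}$).

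By the Gauss reduction argument preceding Lemma~\ref{lemmacycle}, a reduction of $M'$ along $\delta$ performed in the polar order can only get stuck by producing a submatrix of the shape \eqref{submatrixproblem}, and Lemma~\ref{lemmacycle} then exhibits a cycle $\gamma$ of $\delta$ of the shape of Figure~\ref{fig:cycle} (possibly in a degenerate $2$- or $3$-edge form). So it is enough to prove that every such $\gamma$ is either resolvable — sidestepped by a more convenient key column or leaving the pivot entry of the form $t^{\beta}(1-t)$ after the operation $H-\varphi H'$, as in Remark~\ref{rkamono} — or is one of the cycles excluded by hypothesis. As in Lemma~\ref{lem1}, I would first use the edge classification of Figure~\ref{fig:edge_glmin}: the only $\pol$-edges of $\delta$ are of types $E_1$ and $E_8$ and the only $\rhd$-edges are of types $E_3,E_4,E_5,E_7$, and since no vertex of $\delta$ satisfies $H_2^{P_i}=H_{m(P_i)-1}^{P_i}$ (that would be $m(P_i)=3$), the $\pol$-part of $\gamma$ alternates between $E_1$ and $E_8$ edges while the $\rhd$-part is a concatenation of $E_4$-chains running down a fixed pencil glued by $E_3,E_5,E_7$ jumps; this pins down the combinatorial type of $\gamma$ just as in Lemmas~\ref{lem1} and \ref{propmono3last}.

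The heart of the argument, and the step I expect to be the main obstacle, is the following computation: traverse $\gamma$ once, compose the row operations $O_{H,H'}^{P,P'}$ read off its edges, and evaluate the polynomial $\varphi$ accumulated on the entry $e(H,c_H^{P})$ when the loop closes. Using the explicit form of the entries $e(H_2^{P_i},c^{P'})$ and $e(H_h^{P_i},c^{P'})$ produced by \eqref{eq:boundary2} together with the normalisations above (so that each such entry is $t^{\alpha}(1-t)$ or $t^{\alpha}(1-t^{m})$), the successive factors telescope, and the outcome depends only on the length $l$ of $\gamma$ and on the orientation of the turning edge $[(H',P'),(\overline{H},\overline{P})]$: when $\overline{H}\rhd H'$ and $l$ is even, or $\overline{H}\pol H'$ and $l$ is odd, $\varphi$ comes out as a unit of $\C[t,t^{-1}]$ times $(1-t)$ and the reduction goes through (or can be routed through another column), whereas in the two complementary cases — $\overline{H}\rhd H'$ with $l$ odd, or $\overline{H}\pol H'$ with $l$ even — $\varphi$ carries a genuine cyclotomic factor and $\gamma$ is a true obstruction. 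These are exactly the cycles forbidden in the statement; since $\delta$ contains none of them, no obstruction survives, every pivot is $1-t$, and $M'$ reduces to $\diag(1-t)$. I expect the telescoping bookkeeping of this last computation, and in particular the precise way the parity of $l$ interacts with the $E_1/E_8$ alternation and the $\rhd$/$\pol$ turn, to be the only delicate point; the rest is a transcription of Section~\ref{subgraph1}.
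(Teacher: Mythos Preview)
Your setup and the admissibility check for $\delta$ are along the right lines, and the reduction via Lemma~\ref{lemmacycle} to the study of cycles in $\delta$ is exactly how the paper proceeds. The divergence, and the gap, is in what you call the ``heart of the argument''.

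You propose to \emph{compute} the accumulated polynomial $\varphi$ around a cycle $\gamma$ and show that in the parity cases $(\overline{H}\rhd H',\ l\ \text{even})$ and $(\overline{H}\pol H',\ l\ \text{odd})$ it telescopes to a unit times $(1-t)$. The paper never does, and never needs, any such computation; the claim you make about the telescoping is not established anywhere and I do not see how it would follow from the formulas of~\eqref{eq:boundary2} and Section~\ref{sec:sharp}. The paper's argument is purely combinatorial: using the alternation you yourself notice (each $\pol$-edge switches between type $H_2^{P_\bullet}$ and type $H_{m(P_\bullet)-1}^{P_\bullet}$, since $m=3$ is excluded), Lemma~\ref{lem2} pins down the shape of every cycle in $\mathcal{G}_{\A'_0,\last,\mini}$, and then Lemma~\ref{prop4} is a parity count on this alternation showing that a cycle with $(\overline{H}\rhd H',\ l\ \text{even})$ or $(\overline{H}\pol H',\ l\ \text{odd})$ is forced to contain a vertex $H_\bullet^{P_i}$ with $m(P_i)\in\{3,4\}$. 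Such vertices are absent from $\A'_{(0,3,4)}$, so these cycles simply do not exist in $\delta$. The hypothesis of the proposition rules out the two complementary parity cases, hence $\delta$ has \emph{no} cycle of the shape of Figure~\ref{fig:cycle} whatsoever, and Remark~\ref{rkamono} gives $\diag(1-t)$ directly. In short: replace your matrix-entry computation by the parity argument of Lemma~\ref{prop4}; the goal is to show the cycles are absent, not that they are harmless.
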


The following main Theorem, stated in Introduction, follows from previous Proposition, by Remarks \ref{lastmininfty} and \ref{rkamono}.

\begin{theorem}\label{thm2.1}
 Let $\A$ in $\R^2$ be a sharp arrangement. If $\mathcal{G}_{\A'_{(0,3,4)},\last,\mini}$ does not contain any cycle of length $l$ as the one in Figure \ref{fig:cycle} such that:
\begin{itemize}
\item $\overline{H}\rhd H'$ and $l$ is odd
\item $\overline{H}\pol H'$ and $l$ is even
\end{itemize} 
then $\A$ is a-, 3- or 4-monodromic.
\end{theorem}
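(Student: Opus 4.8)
The plan is to obtain Theorem~\ref{thm2.1} from Proposition~\ref{thm2} by assembling the reductions of Sections~\ref{sec:sharp} and~\ref{sec:mains} and then reading off the elementary divisors of the reduced boundary matrix. First I would recall, from items i)--iii) at the start of Section~\ref{sec:mains}, that after deleting the row $H_1^{P_0}$, diagonalizing $M_1$ via the operations $O^P_{i+1,i}(t^{-1})$, $P\in\mathcal{P}$, and performing the operations $O^{P_0}_{m(P_0)-1,k}$, the nonzero entries of the resulting matrix $D(M)$ outside the already diagonal blocks $D(P)$, $P\in\mathcal{P}$, lie only in positions $(H,c^{P'})$ with $H\in S(P')\cup\widehat{U}_{Max}(P')\cup\mathcal{N}(P')$. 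Then, by the coprimality argument of Remark~\ref{rkdiag}, every hyperplane of $\A_r$ from~(\ref{eq:remove}) is removed in the sense of Definition~\ref{def:remove}, and by the Lemma of Subsection~\ref{subgraph2} so are the rows $\{H_k^{P_0}\}_{2<k<m(P_0)-1}$, in both cases without disturbing the columns $c_H^{\last(H)}$ ($H\neq H_2^{P_i}$) and $c_{H_2^{P_i}}^{\mini(H_2^{P_i})}$. After all of this the matrix is block diagonal: a $1-t$ pivot for each removed hyperplane, a block $D(P_i)$ for each $P_i\in\mathcal{P}$ with $m(P_i)\in\{3,4\}$ whose rows survived $\A_r$, and one block $N$ whose rows and columns are indexed by the vertices of $\mathcal{G}_{\A'_{(0,3,4)},\last,\mini}$.

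Next I would verify that $\mathcal{G}_{\A'_{(0,3,4)},\last,\mini}$ satisfies the hypotheses of Remark~\ref{rkamono}: it has exactly $|\A'_{(0,3,4)}|$ vertices, one per hyperplane, and using Remark~\ref{lastmininfty} together with the elementary facts recorded after Figure~\ref{fig:edge_glmin} --- a point $\mini(H_2^{P_j})$ can coincide only with one of the form $\last(H_2^{P_i})$, two $\mini$-points of distinct $H_2$-lines of $\A'$ never coincide, and a coincidence $\last(H_{m(P_i)-1}^{P_i})=\last(H)$ with $H\in\A'_{(0,3,4)}$ forces $m(P_i)=3$, hence $H_{m(P_i)-1}^{P_i}\notin\A'_{(0,3,4)}$ --- any two distinct vertices carry distinct hyperplanes and distinct points. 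Thus Remark~\ref{rkamono}, via Lemma~\ref{lemmacycle}, reduces diagonalizability of $N$ to the absence of obstructing cycles as in Figure~\ref{fig:cycle}, and Proposition~\ref{thm2} sharpens this: under the hypothesis that no cycle of the forbidden parity type occurs (length $l$ odd with $\overline{H}\rhd H'$, or $l$ even with $\overline{H}\pol H'$), the block $N$ diagonalizes to $\diag(1-t)$. I would take Proposition~\ref{thm2} as given; its content --- that a cycle of the non-forbidden parity still leaves the pivot $e(H,c_H^P)$ of the form $t^\beta(1-t)$ after the operation $H-\varphi'H'$ of~(\ref{submatrixproblem}) --- is the genuinely delicate step, but it is precisely what that proposition supplies.

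It then remains to read off $H_1(M(\A);R_t)$ from the block decomposition. The $1-t$ pivots (from removed hyperplanes, from the last row of each $D(P_i)$, and from $N$) contribute only free cyclic modules $R$. The remaining diagonal entries of the $D(P_i)$ are $1-t^{m(P_i)}$ with $m(P_i)\in\{3,4\}$, so they contribute torsion supported on $\varphi_d$ with $d\mid m(P_i)$, $d>1$. Since a nontrivial monodromy of a sharp arrangement divides both $|\A|+1$ and $m(P_0)$ by~\cite[3.21 (ii)]{Yoshi2}, and factors $\varphi_d$ with $d\nmid|\A|+1$ can be cleared exactly as in Remark~\ref{rkdiag}, the only orders $>1$ that can appear are $d=3$ (from multiplicity-$3$ points) and $d\in\{2,4\}$ (from multiplicity-$4$ points), so that $H_1(M(\A);R_t)\cong\C^{|\A|}\oplus(R/\varphi_3)^{a}\oplus(R/\varphi_4)^{b}$ up to the $\varphi_2$-summand discussed below; i.e. $\A$ is a-, $3$- or $4$-monodromic.

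The principal obstacle I anticipate is not in this assembly but in making precise that the three families of operations --- removing $\A_r$, removing $\{H_k^{P_0}\}_{2<k<m(P_0)-1}$, and diagonalizing $N$ through Proposition~\ref{thm2} --- do not interfere with one another: one must check that the pivot columns of each family are untouched by the others, which is the purpose of the ``without changing columns'' clauses of the Lemmas of Subsections~\ref{subgraph1}--\ref{subgraph2} and of Theorem~\ref{thmeffect}, and verifying that these stack consistently --- especially around the rows $H_k^{P_0}$, which lie in the upper cone maximal of many points and are simplified only after~(\ref{op:P0}) --- requires care. A secondary point is the fate of the factor $\varphi_2$ arising from a potential multiplicity-$4$ point $P_i\in\mathcal{P}$: since such a point survives $\A_r$ only when $2\mid m(P_0)$ and $2\mid|\A|+1$, seeing that $\varphi_2$ does not contribute needs either an extra geometric argument on sharp arrangements or a sharpening of the divisibility constraints of~\cite[3.21 (ii)]{Yoshi2}; I would settle this as the last detail.
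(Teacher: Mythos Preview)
Your proposal is essentially the paper's own proof, spelled out in full: the paper derives Theorem~\ref{thm2.1} in one line from Proposition~\ref{thm2} together with Remarks~\ref{lastmininfty} and~\ref{rkamono}, and your assembly of the reductions of Sections~\ref{sec:sharp}--\ref{sec:mains}, the vertex-count and distinctness check via Remark~\ref{lastmininfty}, and the invocation of Remark~\ref{rkamono} is exactly that.

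One correction, though it does not break your argument since you treat Proposition~\ref{thm2} as a black box: you mischaracterize its mechanism. The content of Proposition~\ref{thm2} is \emph{not} that a cycle of the non-forbidden parity ``still leaves the pivot $e(H,c_H^P)$ of the form $t^\beta(1-t)$''. Rather, Lemmas~\ref{lem2} and~\ref{prop4} show that any cycle in $\mathcal{G}_{\A'_0,\last,\mini}$ of the complementary parity (even length with $\overline{H}\rhd H'$, or odd length with $\overline{H}\pol H'$) necessarily contains a vertex $H_\bullet^{P_i}$ with $m(P_i)\in\{3,4\}$. Hence such cycles simply do not exist once one passes to $\A'_{(0,3,4)}$. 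Combined with the hypothesis of the theorem, which rules out the remaining parity, $\mathcal{G}_{\A'_{(0,3,4)},\last,\mini}$ has \emph{no} cycles of the Figure~\ref{fig:cycle} type at all, and Remark~\ref{rkamono} gives the diagonalization directly. The mechanism is exclusion of cycles, not tolerance of them; this also dissolves the ``genuinely delicate step'' you flag.
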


When the assumptions of Theorem \ref{thm2.1} are satisfied, we can deduce, as in previous section, the a-monodromicity of $\A$ from the connectivity of $\Gamma(\A).$ 
\begin{corollary}
 Let $\A$ in $\R^2$ be a sharp arrangement. Assume $\mathcal{G}_{\A'_{(0,3,4)},\last,\mini}$ does not contain any cycle of length $l$ as the one in Figure \ref{fig:cycle} such that:
\begin{itemize}
\item $\overline{H}\rhd H'$ if $l$ is odd
\item $\overline{H}\pol H'$ if $l$ is even,
\end{itemize} 
then, if $\Gamma(\A)$ is connected, $\A$ is a-monodromic.
\end{corollary}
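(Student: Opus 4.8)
The plan is to derive this corollary by chaining two results that are already available: Theorem \ref{thm2.1} of this paper and the criterion of the first author in \cite{Bailet}. First I would apply Theorem \ref{thm2.1}: under the stated hypothesis that $\mathcal{G}_{\A'_{(0,3,4)},\last,\mini}$ contains no cycle of length $l$ with $\overline{H}\rhd H'$ and $l$ odd, or $\overline{H}\pol H'$ and $l$ even, the arrangement $\A$ is a-, $3$- or $4$-monodromic. Reformulating, every non trivial cyclic module $R/(\varphi_d)$ occurring in the decomposition of $H_1(M(\A);R_t)$ has $d\in\{3,4\}$, and since $3=3^{1}$ and $4=2^{2}$ are both prime powers, $\A$ admits only monodromies of prime power order.

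Next I would invoke the criterion established in \cite{Bailet}, obtained by combining the Papadima--Suciu upper bound for the torsion in the relevant (co)homology (\cite[Theorem C]{PS}) with the vanishing of the Aomoto complex over $\F_p$ when the graph of double points $\Gamma(\A)$ is connected (\cite[Lemma 2.1]{Bailet}): if $\A$ admits only monodromies of prime power order, then Conjecture \ref{conj:amon} holds for $\A$, i.e. connectivity of $\Gamma(\A)$ implies that $\A$ is a-monodromic. Applying this to the conclusion of the previous step, the hypothesis on $\mathcal{G}_{\A'_{(0,3,4)},\last,\mini}$ confines the possible monodromies to orders $3$ and $4$, both prime powers, and then the assumed connectivity of $\Gamma(\A)$ excludes even these, so $\A$ is a-monodromic.

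This is essentially a bookkeeping argument, so I do not expect any serious obstacle; the whole content is transferred into Theorem \ref{thm2.1} and into \cite{Bailet}. The only point that needs a (trivial) remark is that the orders $3$ and $4$ produced by Theorem \ref{thm2.1} are prime powers, which is exactly what the hypothesis of the Bailet criterion requires. Note also that this statement is the precise analogue, for the graph $\mathcal{G}_{\A'_{(0,3,4)},\last,\mini}$, of the corollary already proved for $\mathcal{G}_{\A_0,\last}$ in Subsection \ref{subgraph1}, and the same proof scheme applies verbatim.
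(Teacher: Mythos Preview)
Your proposal is correct and follows exactly the approach of the paper: apply Theorem \ref{thm2.1} to restrict the possible non trivial monodromies to orders $3$ and $4$, observe that these are prime powers, and then invoke the criterion from \cite{Bailet} (based on \cite[Theorem C]{PS} and \cite[Lemma 2.1]{Bailet}) to conclude a-monodromicity from the connectivity of $\Gamma(\A)$. The paper itself simply refers back to the analogous corollary in Subsection \ref{subgraph1}, whose justification is precisely the argument you wrote out.
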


The rest of this section is devoted to prove our main result \ref{thm2}, consequence of Lemma's \ref{lem2} and \ref{prop4}.

\begin{lemma}\label{lem2}
Let $\A$ in $\R^2$ be a sharp arrangement. If $H_h^{P_i} \in \A_0'$ is a vertex in a cycle of $\mathcal{G}_{\A'_0,\last, \mini}$ as the one in Figure \ref{fig:cycle}, then $h=2 \,\text{or}\,\, m(P_i)-1\,\text{or}\,\,m(P_i)-2,$ the latter being only possible if  $i>0.$ More precisely we have cycles as the one in Figure \ref{fig:cyclastmin}.
\end{lemma}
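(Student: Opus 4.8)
The plan is to follow the proof of Lemma~\ref{lem1} almost verbatim, replacing the catalogue of edges of $\mathcal{G}_{\A_0,\last}$ by the catalogue of edges of $\mathcal{G}_{\A'_0,\last,\mini}$ recorded in Figure~\ref{fig:edge_glmin}, and supplementing Remark~\ref{rk1} by Remarks~\ref{rklasth2} and~\ref{rkmin}, which describe the sets $S(\mini(H_2^{P_j}))$ and $\bold{\widehat{U}}_{Max}(\mini(H_2^{P_j}))$ for a sharp arrangement. Throughout one uses that in $\mathcal{G}_{\A'_0,\last,\mini}$ the only $\pol$-edges are $E_1$ (target of the form $H_2^{P_i}$) and $E_8$ (target of the form $H_{m(P_i)-1}^{P_i}$), while the only $\rhd$-edges are $E_3,E_4,E_5,E_7$, whose targets are respectively $H_{m(P_0)-1}^{P_0}$, $H_{k-1}^{P_j}$, $H_{m(P_i)-1}^{P_i}$ and $H_2^{P_i}$; recall also that here $E_1,E_3,E_5$ require $k\neq2$ and $E_4$ requires $0<j$.

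First I would treat the interior vertices of the cycle $\gamma$ of Figure~\ref{fig:cycle}, that is the vertices $H_{i_1},\dots,H_{i_k}$ on the $\pol$-chain and $H_{i_{k+1}},\dots,H_{i_{k+s}}$ on the $\rhd$-chain. Each such vertex is simultaneously the target of one edge and the source of the next edge of the same orientation, so it suffices to cross-check, for every vertex type occurring as a target above, which edges can leave it — exactly the matching argument used in Lemma~\ref{lem1}, but with the larger edge list. This forces an interior vertex to be of the form $H_2^{P_i}$ or $H_{m(P_i)-1}^{P_i}$, with one new possibility: a vertex reached by an $E_4$-edge out of some $H_{m(P_i)-1}^{P_i}$ is of the form $H_{m(P_i)-2}^{P_i}$, and then $i>0$ because $E_4$ requires $0<j$; one then checks that the edges which can enter or leave $H_{m(P_i)-2}^{P_i}$ inside a cycle are precisely those of Figure~\ref{fig:cyclastmin}, so $\gamma$ closes in that shape. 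The fact that a smaller line $H_{m(P_i)-1}^{P_i}$ or $H_{m(P_i)-2}^{P_i}$ of a point $P_i$ with $i$ larger than $j$ cannot lie in $S(\mini(H_2^{P_j}))$ nor in $\bold{\widehat{U}}_{Max}(\mini(H_2^{P_j}))$ (Remark~\ref{rkmin}) is exactly what removes the analogues of $E_2$ and $E_6$ and prevents indices below $m(P_i)-2$ from occurring.

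Next I would deal with the two corner vertices $\overline{H}$ and $\widetilde{H}$ of Figure~\ref{fig:cycle}. The edge $[H,\widetilde{H},\rhd]$ forces, by the target analysis above, $\widetilde{H}=H_{m(P_i)-1}^{P_i}$ (or $\widetilde H = H_{m(P_i)-2}^{P_i}$ when an $E_4$ intervenes), with $P_i\neq P_0$ unless $\widetilde H = H_{m(P_0)-1}^{P_0}$; the edge into $\overline{H}$ (which may be $\pol$, types $E_1,E_8$, or $\rhd$, types $E_3,E_4,E_5,E_7$) likewise forces $\overline{H}$ to be of one of the three admissible forms. At this point I would invoke Remark~\ref{rklasth2} to discard the line $H_2^{P_0}$ as a candidate ``smaller line in $\Cone(\last(\cdot))$'', which is legitimate because $m(P_0)>3$ has been assumed in this subsection, so $H_2^{P_0}\neq H_{m(P_0)-1}^{P_0}$. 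Relabelling the vertices of $\gamma$ as $H_2^{P_j}$, $H_{m(P_i)-1}^{P_i}$, $H_{m(P_i)-2}^{P_i}$ (with $i>0$) and $H_2^{P_{j_\ell}}$, and omitting the points, which by Definition~\ref{def:grafGLM} are all of the form $\last(\cdot)$ or $\mini(\cdot)$, turns Figure~\ref{fig:cycle} into Figure~\ref{fig:cyclastmin}.

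The main obstacle is purely combinatorial bookkeeping: compared with Lemma~\ref{lem1} there are two extra edge families ($E_7$ and $E_8$, issuing from or landing in the $\mini$-labelled vertices $(H_2^{P_i},\mini(H_2^{P_i}))$), the edge $E_4$ must now be tracked because it is what produces the index $m(P_i)-2$, and one must carefully keep apart the $\last$-labelled vertices from the $\mini$-labelled ones when matching incoming and outgoing edges. I expect no new geometric input beyond ``$\A$ sharp $\Rightarrow$ all intersection points lie on one side of $H_1^{P_0}$'', already exploited in Remark~\ref{rk1}, together with the multiplicity collapses of Remarks~\ref{rklasth2} and~\ref{rkmin}; the content is the exhaustive but routine enumeration of admissible two-edge paths through each vertex type.
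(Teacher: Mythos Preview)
Your proposal contains a genuine misreading of the cycle in Figure~\ref{fig:cycle} that makes the argument break down. You treat the bottom row $H_{i_{k+1}},\dots,H_{i_{k+s}}$ as a chain of $\rhd$-edges, but from the construction in Lemma~\ref{lemmacycle} (cases (b) and (d)) \emph{both} the top and bottom chains consist of $\pol$-edges: the top chain is $\overline{H}\pol H_{i_k}\pol\cdots\pol H_{i_1}\pol H$ and the bottom chain is $\widetilde{H}\pol H_{i_{k+s}}\pol\cdots\pol H_{i_{k+1}}\pol H'$. The labels $\rhd$ in the figure record the left-to-right ordering of vertices, not the edge type; the only genuinely $\rhd$-edges in the cycle are the two ``vertical'' edges $[H,\widetilde{H},\rhd]$ and possibly $[H',\overline{H},\rhd]$. (Note also that a chain of $\rhd$-edges starting at $\widetilde{H}$ could never reach $H'$, since $\widetilde{H}\pol H'$.)

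This matters for the argument. Once you know that every interior edge is a $\pol$-edge, the paper's proof is immediate: the only $\pol$-edges in $\mathcal{G}_{\A'_0,\last,\mini}$ are $E_1$ and $E_8$, whose targets are $H_2^{P_i}$ and $H_{m(P_i)-1}^{P_i}$ respectively, so \emph{all} vertices $H,H',H_{i_1},\dots,H_{i_{k+s}}$ are of one of these two types, and the alternation follows because the source of $E_1$ is never of type $H_2$ and the source of $E_8$ is always $H_2$. The $\rhd$-edges $E_3,E_4,E_5,E_7$ are then only used to determine the two corner vertices $\overline{H}$ and $\widetilde{H}$, and it is precisely $E_4$ applied once at a corner that produces the index $m(P_i)-2$. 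Your version, by allowing $E_4$ inside the chain, would permit cascades $H_{m-1}^{P_j}\to H_{m-2}^{P_j}\to H_{m-3}^{P_j}\to\cdots$ (each step is a legitimate $E_4$-edge for $k>2$), and your sentence ``one then checks that the edges which can enter or leave $H_{m(P_i)-2}^{P_i}$\dots'' does not rule this out.
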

\begin{figure}[htbp]
\centering
\begin{tikzpicture}

\tikzset{vertex/.style = {shape=rectangle}}
\tikzset{edge/.style = {->,> = latex'}}
\node[vertex] (h) at  (12,0) {$H=H_2^{P_{j_0}} (H_{m_{j_0}}^{P_{j_0}})$};
\node[vertex] (h-h1) at  (10,0.4) {$\pol$};
\node[vertex] (h1) at  (8.5,0) {$H_{m_{j_1}}^{P_{j_1}}(H_{2}^{P_{j_1}})$};
\node[vertex] (h1-pointshaut) at  (6.8,0.4) {$\pol$};
\node[vertex] (pointshaut) at  (5.5,0) {$\hdots$};

\node[vertex] (hk) at  (3.7,0) {$H_{m_{j_k}}^{P_{j_k}}$ or $H_2^{P_{j_k}}$};
\node[vertex] (hk-hbar) at  (1.5,0.4) {$\pol$};
\node[vertex] (hbar) at (0,0) {$\overline{H}=H_{\overline{j}}^{P_i}$};

\node[vertex] (h'-hbar) at  (-1,-1) {$\nabla \mbox{ or } \Delta$};
\node[vertex] (h') at  (0,-2) {$H'=H_{m_{j'}}^{P_{j'}}$ or $H_2^{P_{j'}}$};
\node[vertex] (h'-hk1) at  (2.3,-2.5) {$\rhd$};

\node[vertex] (pointsbas) at  (3.5,-2) {$\hdots$};
\node[vertex] (hks-htilde) at  (4.6,-2.5) {$\rhd$};
\node[vertex] (hks) at  (6.8,-2) {$H_{m_{j_{k+s}}}^{P_{j_{k+s}}}(H_2^{P_{j_{k+s}}})$};
\node[vertex] (hks-htilde) at  (9,-2.5) {$\rhd$};
\node[vertex] (htilde) at  (12,-2) {$\widetilde{H}=H_{2}^{P_j}(H_{m_j}^{P_j} \mbox { or } H_{m_j-1}^{P_j})$};
\node[vertex] (h-htilde) at  (12.4,-1) {$\nabla$};


\draw[edge] (hbar.east)  to[bend left] (hk.west);
\draw[edge] (pointshaut.east)  to[bend left] (h1.west);
\draw[edge] (h1.east)  to[bend left] (h.west);
\draw[edge] (h'.north)  to[bend left] (hbar.south);
\draw[edge] (pointsbas.west)  to[bend left] (h'.east);
\draw[edge] (hks.west)  to[bend left] (pointsbas.east);
\draw[edge] (htilde.west)  to[bend left] (hks.east);
\draw[edge] (h.south)  to[bend left] (htilde.north);

\end{tikzpicture}
\caption{Cycles in $\mathcal{G}_{\A'_0,\last, \mini}$ with $m_{j_h}=m(P_{j_h})-1$, the vertices alternating and $\overline{j}=2, m_{\overline{j}} , m_{\overline{j}}-1$.}
\label{fig:cyclastmin}
\end{figure}
\proof
By Remarks \ref{rk1} and \ref{rkmin} (see also Figure \ref{fig:edge_glmin}), we know that edges $[H,H',\pol]$ in $\mathcal{G}_{\A'_0,\last,\mini}$ are of the form $E_1$ and $E_{8}$. It follows that vertices $H, H' $ and $H_{i_j}$, $ 1 \leq j \leq k+s$ have to be of type $H_2^{P_i}$ or $H_{m(P_i)-1}^{P_i}$ and, moreover, the type alternates if $m(P_i)\neq3$, i.e. two adjacent vertices have to be of different type.
By Remarks \ref{rk1} and \ref{rkmin} one can asses also the exact values of $\overline{H}$ and $\widetilde{H}$ depending on the types of $H$, $H'$ and the sign of the edge $[H,H']$ (see Figure \ref{fig:edge_glmin}). More precisely we have:
\begin{enumerate}
\item if $H=H_2^{P_{j_0}}$ then $\widetilde{H}=H_2^{P_j}$ (see $E_7$) corresponding, respectively, to type $H_{m(P_{j_{k+s}})-1}^{P_{j_{k+s}}}$ for the subsequent vertex in the cycle in Figure \ref{fig:cyclastmin} (see $E_8$);
\item if $H=H_{m(P_{j_0})-1}^{P_{j_0}}$ then $\widetilde{H}=H_{m(P_{j_0})-2}^{P_{j_0}}$ (see $E_4$) or $\widetilde{H}=H_{m(P_j)-1}^{P_j}$ (see $E_3$ and $E_5$) corresponding both to type $H_2^{P_{j_{k+s}}}$ for the subsequent vertex in the cycle in Figure \ref{fig:cyclastmin} (see $E_1$);
\item if $H' \rhd \overline{H}$ then as $H'=H_2^{P_{j'}}$ or $H'=H_{m(P_{j'})-1}^{P_{j'}}$ previous points 1. and 2. respectively apply with $H'$ instead of $H$ and $\overline{H}$ instead of $\widetilde{H}$;
\item if $H' \pol \overline{H}$ then same alternating rule of other vertices applies (see $E_1$ and $E_8$), that is if 
$H'=H_2^{P_{j'}}$ ($H_{m(P_{j'})-1}^{P_{j'}}$) then $\overline{H}=H_{m(P_{i})-1}^{P_{i}}$ ($H_2^{P_{i}}$)
\end{enumerate}
and we get cycles as the one in Figure \ref{fig:cyclastmin}. 
\endproof

\begin{figure}[htbp]
\centering
\begin{tikzpicture}

\tikzset{vertex/.style = {shape=rectangle}}
\tikzset{edge/.style = {->,> = latex'}}
\node[vertex] (hbar) at (0,0) {$\overline{H}=H_{\overline{j}}^{P_i}$};

\node[vertex] (h) at  (12,0) {$H=H_2^{P_{j_0}} (H_{m_{j_0}}^{P_{j_0}})$};
\node[vertex] (h-h1) at  (10,0.4) {$\pol$};
\node[vertex] (h1) at  (8.5,0) {$H_{m_{j_1}}^{P_{j_1}}(H_{2}^{P_{j_1}})$};
\node[vertex] (h1-pointshaut) at  (6.8,0.4) {$\pol$};
\node[vertex] (pointshaut) at  (5.5,0) {$\hdots$};
\node[vertex] (hk) at  (3.7,0) {$H_{m_{j_k}}^{P_{j_k}}$ or $H_2^{P_{j_k}}$};
\node[vertex] (gm) at  (0,1) {$\gamma_1 :$};
\node[vertex] (gm2) at  (0,-1.2) {$\gamma_2 :$};
\node[vertex] (h') at  (0,-2) {$H'=H_{m_{j'}}^{P_{j'}}$ or $H_2^{P_{j'}}$};
\node[vertex] (h'-hk1) at  (2.3,-2.5) {$\rhd$};
\node[vertex] (pointsbas) at  (3.5,-2) {$\hdots$};
\node[vertex] (hks-htilde) at  (4.6,-2.5) {$\rhd$};
\node[vertex] (hks) at  (6.8,-2) {$H_{m_{j_{k+s}}}^{P_{j_{k+s}}}(H_2^{P_{j_{k+s}}})$};
\node[vertex] (hks-htilde) at  (9,-2.5) {$\rhd$};
\node[vertex] (htilde) at  (12,-2) {$\widetilde{H}=H_{2}^{P_j}(H_{m_j}^{P_j} \mbox { or } H_{m_j-1}^{P_j})$};


\draw[edge] (hbar.east)  to[bend left] (hk.west);
\draw[edge] (pointshaut.east)  to[bend left] (h1.west);
\draw[edge] (h1.east)  to[bend left] (h.west);
\draw[edge] (pointsbas.west)  to[bend left] (h'.east);
\draw[edge] (hks.west)  to[bend left] (pointsbas.east);
\draw[edge] (htilde.west)  to[bend left] (hks.east);

\end{tikzpicture}
\caption{Subgraphs $\gamma_1$ and $\gamma_2$ of cycle $\gamma$ in $\mathcal{G}_{\A'_0,\last, \mini}$}
\label{fig:gammas}
\end{figure}

\begin{remark}\label{rem:lami}Let us remark that in the cycle $\gamma$ in Figure \ref{fig:cyclastmin} the type of $H'$ and $H_{h}^{P_{j_k}}$ ($h=m_{j_k}=m(P_{j_k})-1$ or $h=2$) depends, respectively, on the length of the paths $\gamma_2$ and $\gamma_1$ in Figure \ref{fig:gammas}. In particular if $H=H_2^{P_{j_0}} (H_{m(P_{j_0})-1}^{P_{j_0}})$ and $l_i$ is the length of $\gamma_i$,  we have:
\begin{enumerate} 
\item if $l_2=2h$ then $H'=H_2^{P_{j'}} (H_{m(P_{j'})-1}^{P_{j'}})$ and if $l_1=2h$ then $H_{\bullet}^{P_{j_k}}=H_{m(P_{j_k})-1}^{P_{j_k}}(H_2^{P_{j_k}})$;
\item if $l_2=2h-1$ then $H'=H_{m(P_{j'})-1}^{P_{j'}}(H_2^{P_{j'}})$ and if $l_1=2h-1$ then $H_{\bullet}^{P_{j_k}}=H_2^{P_{j_k}}(H_{m(P_{j_k})-1}^{P_{j_k}})$.
\end{enumerate}
By Lemma \ref{lem2} we also know that
\begin{enumerate}
\item[3.] if $H' \rhd \overline{H}$, $H'=H_2^{P_{j'}}(H_{m(P_{j'})-1}^{P_{j'}})$, then $\overline{H}=H_2^{P_{i}}(H_{m(P_{i})-1}^{P_{i}}$ or $H_{m(P_{i})-2}^{P_{i}})$;
\item[4.] if $H' \pol \overline{H}$ then same alternating rule of other vertices applies, that is if 
$H'=H_2^{P_{j'}}(H_{m(P_{j'})-1}^{P_{j'}})$ then $\overline{H}=H_{m(P_{i})-1}^{P_{i}} (H_2^{P_{i}})$.
\end{enumerate}
Finally remark that, by construction, if $\gamma$, $\gamma_1$ and $\gamma_2$ have lengths, respectively, $l, l_1$ and $l_2$ then $l=l_1+l_2+2$, that is if $l$ is even then $l_1$ and $l_2$ have to be both even or odd, while if $l$ is odd $l_1$ and $l_2$ have to be one odd the other even. 
\end{remark}
We can now prove the following final result.

\begin{lemma}\label{prop4}
Let $\A$ in $\R^2$ be a sharp arrangement, $\gamma$ a cycle in $\mathcal{G}_{\A'_0,\last, \mini}$ of length $l$. If
\begin{itemize} 
\item[i)]$\overline{H}\rhd H'$ and $l=2h$ or 
\item[ii)]$\overline{H}\pol H'$ and $l=2h+1$ 
\end{itemize}
then $\gamma$ contains a vertex $H_{\bullet}^{P_i}$ such that $m(P_i)\in \{3,4\}.$
\end{lemma}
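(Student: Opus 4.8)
The strategy is to analyze the cycle $\gamma$ in Figure \ref{fig:cyclastmin} by exploiting the alternating structure of its vertices established in Lemma \ref{lem2}, together with the length constraints recorded in Remark \ref{rem:lami}. Recall that along the two paths $\gamma_1$ and $\gamma_2$ the vertex type strictly alternates between $H_2^{P_{j_a}}$ and $H_{m(P_{j_a})-1}^{P_{j_a}}$ unless some intermediate point $P_{j_a}$ has multiplicity $3$ (in which case the two types coincide and we are done immediately, since $m(P_{j_a})=3$). So the real content is: \emph{assuming} all the intermediate points along $\gamma_1$ and $\gamma_2$ have multiplicity $\geq 4$ and the alternation is strict, derive a contradiction with the parity hypotheses $(i)$ or $(ii)$ unless the two ``turning'' vertices $\overline{H}$ or $\widetilde{H}$, or the vertex $H_{\bullet}^{P_{j_k}}$ at the end of $\gamma_1$, sit at a point of multiplicity $3$ or $4$.

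\textbf{Key steps.} First I would set $H=H_2^{P_{j_0}}$ without loss of generality (the parenthetical case being symmetric), and trace the type through $\gamma$. By Remark \ref{rem:lami}(1)--(2), the type of the vertex $H_{\bullet}^{P_{j_k}}$ reached along $\gamma_1$ from $\overline{H}$, and the type of $H'$ reached along $\gamma_2$ from $\widetilde{H}$, are both forced by the parities of $l_1$ and $l_2$. Next, the edges incident to $\overline{H}$ — namely $[\overline{H},H_{\bullet}^{P_{j_k}},\pol]$ and the edge into $\overline{H}$ from $H'$ — must each be of one of the allowed types $E_1,E_3,E_4,E_5,E_7,E_8$ (Figures \ref{fig:edge_glast} and \ref{fig:edge_glmin}); I would enumerate, using Remarks \ref{rk1} and \ref{rkmin}, which combinations of types on these two edges are geometrically compatible. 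The crucial point is that a vertex $\overline{H}=H_{\overline{j}}^{P_i}$ with $\overline{j}\in\{2,m(P_i)-1,m(P_i)-2\}$ that is simultaneously the target of an incoming edge and the source of an outgoing $\pol$-edge in $\mathcal{G}_{\A'_0,\last,\mini}$ constrains the relative polar positions of the lines through $P_i$: combining $\overline{j}=m(P_i)-2$ (only possible, by Lemma \ref{lem2}, when $i>0$) with the requirement that the line two steps down is still in $\bold{\widehat{U}}_{Max}$ or $S$ of the relevant last/min point forces $m(P_i)-2\geq 2$ and, crucially, that there be at least $m(P_i)-2$ lines through $P_i$ strictly above, while the target-side edge forces a line at most two steps from the extreme — pinching $m(P_i)\leq 4$. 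The same parity bookkeeping applied at $\widetilde{H}$ gives the analogous conclusion there. Finally I would assemble: if none of the turning vertices had multiplicity $3$ or $4$, the composite of the parity constraints from $\gamma_1$, $\gamma_2$ and the ``closing'' edges $[\overline{H},\cdot]$, $[H,\widetilde{H},\nabla]$ would contradict $l=l_1+l_2+2$ together with hypothesis $(i)$ or $(ii)$ — this is precisely where the hypotheses ``$\overline{H}\rhd H'$ and $l$ even'' versus ``$\overline{H}\pol H'$ and $l$ odd'' are used, because each fixes the parity of $l_1+l_2$ and hence whether $H'$ and $\overline{H}$ have the same or opposite type, which in turn is incompatible with the forced type of $\overline{H}$ coming from the outgoing edge unless a small-multiplicity point intervenes.

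\textbf{Main obstacle.} The hard part will be the careful case analysis at the two turning vertices $\overline{H}$ and $\widetilde{H}$: there are several admissible edge-types on each of the two edges meeting $\overline{H}$ (and on the edge $[H,\widetilde{H}]$), and for each combination one must check, using the sharpness of $\A$ and the geometric remarks \ref{rk1}, \ref{rklasth2}, \ref{rkmin}, exactly which relative positions of lines through $P_i$ (resp. $P_j$) are realizable, and extract from that the bound $m(P_i)\in\{3,4\}$ or a parity contradiction. In particular the subcase $\overline{j}=m(P_i)-2$ (edge $E_6$-flavoured but now living in $\mathcal{G}_{\A'_0,\last,\mini}$ via $E_4$) is delicate: one must verify that having \emph{both} $H_{m(P_i)-1}^{P_i}$ and $H_{m(P_i)-2}^{P_i}$ participate as described, while $H=H_2^{P_{j_0}}$ lies in the relevant last-point's star, squeezes $m(P_i)$ down to $3$ or $4$. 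Once this local analysis is complete, the parity accounting that closes the cycle is routine, following the pattern already used in the proof of Lemma \ref{propmono3last} (cases A and B there are the length-$2$ and length-$4$ instances of the present statement, and the general $\gamma_1,\gamma_2$ decomposition just iterates that argument).
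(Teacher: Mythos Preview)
Your plan is correct and follows essentially the same route as the paper: exploit the alternating vertex structure from Lemma~\ref{lem2}, the parity relations $l=l_1+l_2+2$ from Remark~\ref{rem:lami}, and then compare the type forced on a turning vertex by the alternation with the type forced by the adjacent edge (from the list in Figure~\ref{fig:edge_glmin}) to extract $m\in\{3,4\}$. The paper organizes case~(i) a bit more efficiently than you propose: since for $\overline{H}\rhd H'$ the alternation \emph{continues} through the edge $[H',\overline{H}]$ (Remark~\ref{rem:lami}(4)), one can simply count the $l-1$ sign-reversing edges around the whole cycle at once (the paper introduces $l_{(\rhd,<)}$ for this) and read the mismatch off at $\widetilde{H}$ alone, without ever splitting into $\gamma_1,\gamma_2$ or doing any case analysis at $\overline{H}$; the $\gamma_1,\gamma_2$ decomposition is only needed in case~(ii), where the alternation genuinely breaks at $\overline{H}$. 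One small correction to your sketch: the $\overline{j}=m(P_i)-2$ subcase arises via $E_4$, not an $E_6$-type edge ($E_6$ is absent from $\mathcal{G}_{\A'_0,\last,\mini}$), and the bound $m(P_i)\le 4$ there comes directly from matching the index of $\overline{H}$ against the source index required by the outgoing edge ($E_1$ needs $k\ne 2$, $E_8$ needs $k=2$), not from a separate count of lines above $P_i$.
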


\proof By Lemma \ref{lem2} and Remark \ref{rem:lami} we know vertices involved in the cycle $\gamma$ (see Figure \ref{fig:cyclastmin}). Let $l_{(\rhd,<)}$ be the number of edges in $\gamma$ with direction and sign opposite (as in Figure \ref{fig:use}).
\begin{figure}[htbp]
\centering
\begin{tikzpicture}

\tikzset{vertex/.style = {shape=rectangle,minimum size=0.1em}}
\tikzset{edge/.style = {->,> = latex'}}
\node[vertex] (a) at  (0,0) {$.$};
\node[vertex] (b) at  (3,0) {$.$};

\node[vertex] (c) at  (1.5,0.6) {$\pol$};

\draw[edge] (a.east)  to[bend left] (b.west);
\end{tikzpicture}
\caption{\footnotesize{Edges with direction and sign opposite}}\label{fig:use}
\end{figure}
Remark that $l=l_{(\rhd,<)}+1,$ if $\overline{H}\rhd H'$ and $l=l_{(\rhd,<)}+2,$ if $\overline{H}\pol H'$. 
\begin{enumerate}
\item[i)] Case $\overline{H}\rhd H'$ and $l=2h$, that is $l_{(\rhd,<)}=2h-1$. If $H=H_{2}^{P_{j_0}}(H_{m(P_{j_0})-1}^{P_{j_0}})$ then, by the alternating rule of edges, going backwards along the cycle, $\widetilde{H}=H_{m(P_j)-1}^{P_j}(H_{2}^{P_j})$ (as $l_{(\rhd,<)}$ is odd), but on the other hand, considering that $[H,\widetilde{H}]$ is type $E_7$ ($E_4, E_5$ or $E_3$), we have $\widetilde{H}=H_{2}^{P_j} (H_{m(P_j)-2}^{P_j}$ or  $\widetilde{H}= H_{m(P_j)-1}^{P_j})$ (see Figure \ref{fig:cyclastmin}). Hence $m(P_j)=3$ or $4$.\\
\item[ii)] Case $l=2h+1$ and $\overline{H}\pol H'$. In this case we have that the cycle $\gamma$ is divided in the two disconnected subgraphs $\gamma_1$ and $\gamma_2$ removing edges $[\overline{H},H',\pol]$ and $[H,\widetilde{H},\pol]$ (see Figure \ref{fig:gammas}). By Remark \ref{rem:lami} since $l$ is even, if the lenght of $\gamma_1$ is even then the lenght of $\gamma_2$ has to be odd and viceversa. Assume that $l_1=2h_1-1$ and $l_2=2h_2$, the converse is similar. By Remark \ref{rem:lami} 1. and 2. if $H=H_2^{P_{j_0}}(H_{m(P_{j_0})-1}^{P_{j_0}})$ then $l_1=2h_1-1$ implies $H_{\bullet}^{P_{j_k}}=H_2^{P_{j_k}}(H_{m(P_{j_k})-1}^{P_{j_k}})$ and $l_2=2h_2$ implies $H'=H_2^{P_{j'}}(H_{m(P_{j'})-1}^{P_{j'}})$. The latter implies, by Remark \ref{rem:lami} 3, $\overline{H}=H_2^{P_{i}}(H_{m(P_{i})-1}^{P_{i}}$ or $H_{m(P_{i})-2}^{P_{i}})$. Then, since all possible edges in $\gamma$ are the one stated in Figure \ref{fig:edge_glmin}, it follows that $m(P_{j_k})=3$ ( $m(P_{j_k})=3$ or $m(P_i)=3,4$).
\end{enumerate}

\endproof

Let us remark that, analogously to the previous section, in Proposition \ref{thm2} and Theorem \ref{thm2.1} we focused on the arrangement $\A_{(0,3,4)} \subset \A_0$, since our main goal is to show Conjecture \ref{conj:mon}. But above Lemma's are true more in general in $\A'_0$. Hence our algorithm also provides a way to show a-monodromicity directly via the following result.
\begin{proposition}\label{thm0.2}Let $\A$ in $\R^2$ be a sharp arrangement. If $\mathcal{G}_{\A'_0,\last,\mini}$ does not contain any cycle of length $l$ as the one in Figure \ref{fig:cyclastmin} such that:
\begin{itemize}
\item $\overline{H}\rhd H'$ if $l$ is odd
\item $\overline{H}\pol H'$ if $l$ is even,
\end{itemize} 
then $\mathcal{G}_{\A'_{0},\last,\mini}$ is an oriented forest and $\A$ is a-monodromic.
\end{proposition}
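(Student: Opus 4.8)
The plan is to run the reduction of Section~\ref{sec:sharp} down to the submatrix of $Mat(\partial_2)$ indexed by $\A'_0$ and then to read off a-monodromicity from the graph $\mathcal{G}_{\A'_0,\last,\mini}$, exactly as in the proof of Proposition~\ref{thm2} but keeping, rather than discarding, the lines through points of multiplicity $3$ or $4$. First I would recall that, after removing the row $H_1^{P_0}$ (Subsection~\ref{inclusion}), diagonalizing $M_1$ by the operations $O^P_{i+1,i}(t^{-1})$ (Lemma~\ref{lemdiag2}), performing the operations~(\ref{op:P0}) (Remark~\ref{eliminationP0}), removing the hyperplanes of $\A_r$ by the coprimality argument of Remark~\ref{rkdiag} (see~(\ref{eq:remove})), and removing the rows $\{H^{P_0}_k\}_{2<k<m(P_0)-1}$ by the first Lemma of Subsection~\ref{subgraph2}, the matrix $Mat(\partial_2)$ is reduced to one whose rows and columns still to be treated correspond to the hyperplanes of $\A'_0$, with nonzero entries located as in item~iii) at the beginning of Section~\ref{sec:mains}. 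Consequently, by Remark~\ref{rkamono}, $\A$ is a-monodromic as soon as we can remove all hyperplanes of $\A'_0$ at once by performing operations along a subgraph $\delta$ of $\mathcal{G}_{\A'}$ having exactly one vertex per hyperplane of $\A'_0$, with the two coordinates pairwise distinct.

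I would take $\delta=\mathcal{G}_{\A'_0,\last,\mini}$, which by Definition~\ref{def:grafGLM} has one vertex per hyperplane of $\A'_0$, so its hyperplane coordinates are distinct. For the point coordinates, the only coincidences to rule out are $\last(H)=\last(H')$ for two non-$H_2$ lines of $\A'_0$, $\mini(H_2^{P_i})=\mini(H_2^{P_j})$, and $\last(H_h^{P_i})=\mini(H_2^{P_j})$ with $h\neq 2$. The first is excluded by Proposition~\ref{prop1} together with the observation that $m(P)=3$ forces $H^{P}_{m(P)-1}=H^{P}_2$; the second by fact~(2) listed after Figure~\ref{fig:edge_glmin}, which forces $m(P_i)=2$ or $m(P_j)=2$, hence one of the two lines outside $\A'$; the third by fact~(1) there, which forces $h=2$. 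Hence, by Remark~\ref{rkamono}, it suffices to prove that $\mathcal{G}_{\A'_0,\last,\mini}$ contains no cycle, i.e.\ that it is an oriented forest; the a-monodromicity of $\A$ then follows at once, the reduced matrix being $\diag(1-t)$.

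To prove that $\mathcal{G}_{\A'_0,\last,\mini}$ is a forest, I would argue by contradiction: let $\gamma$ be a cycle. By Lemma~\ref{lem2}, $\gamma$ has the shape of Figure~\ref{fig:cyclastmin}, hence it is of one of two parity types --- either $\overline{H}\rhd H'$ with $l$ odd or $\overline{H}\pol H'$ with $l$ even (excluded by hypothesis), or $\overline{H}\rhd H'$ with $l$ even or $\overline{H}\pol H'$ with $l$ odd. In the second case Lemma~\ref{prop4} produces a vertex $H_{\bullet}^{P_i}$ of $\gamma$ with $m(P_i)\in\{3,4\}$. The remaining step --- and this is the main obstacle --- is to rule out such a $\gamma$ as well: at a vertex with $m(P_i)=3$ one has $H^{P_i}_{m(P_i)-1}=H^{P_i}_2$, and at a vertex with $m(P_i)=4$ one has $H^{P_i}_{m(P_i)-2}=H^{P_i}_2$, so the alternation of types along $\gamma$ (Lemma~\ref{lem2}, Remark~\ref{rem:lami}) breaks down there; exploiting this breakdown in the list of admissible edges $E_1,\dots,E_8$ of Figure~\ref{fig:edge_glmin}, one produces from $\gamma$ a cycle of the shape of Figure~\ref{fig:cyclastmin} but of the complementary parity, which is excluded by hypothesis --- a contradiction. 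Hence $\mathcal{G}_{\A'_0,\last,\mini}$ has no cycle of either type, it is an oriented forest, and $\A$ is a-monodromic. As in the proofs of Lemmas~\ref{lem2} and~\ref{prop4}, the delicate point is the geometric case analysis of how the collapse $H^{P_i}_{m(P_i)-1}=H^{P_i}_2$ (resp.\ $H^{P_i}_{m(P_i)-2}=H^{P_i}_2$) interacts with the list of admissible edges; everything else is bookkeeping of the kind already carried out in Section~\ref{sec:sharp}.
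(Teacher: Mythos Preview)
The paper gives no separate proof of this proposition; it is presented as a direct consequence of Lemmas~\ref{lem2} and~\ref{prop4}, which are already stated for $\mathcal{G}_{\A'_0,\last,\mini}$, together with Remark~\ref{rkamono}. Your reconstruction follows exactly this architecture, and the preliminary reductions you list (removal of $H_1^{P_0}$, diagonalization of $M_1$, operations~(\ref{op:P0}), removal of $\A_r$ and of the rows $H_k^{P_0}$ for $2<k<m(P_0)-1$, and the check that the vertex set of $\mathcal{G}_{\A'_0,\last,\mini}$ has pairwise distinct point coordinates) are correct and match the paper's set-up.

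Where you depart from the paper is in your step~4. You correctly observe that the hypothesis only excludes cycles with one parity combination, and that Lemma~\ref{prop4} merely locates a multiplicity-$3$ or $4$ vertex in cycles of the complementary parity; since such vertices are present in $\A'_0$, this does not by itself rule those cycles out. Your proposed fix --- that the coincidence $H^{P_i}_{m(P_i)-1}=H^{P_i}_2$ (resp.\ $H^{P_i}_{m(P_i)-2}=H^{P_i}_2$) lets one ``produce from $\gamma$ a cycle of the shape of Figure~\ref{fig:cyclastmin} but of the complementary parity'' --- is asserted, not proved. The length $l$ of $\gamma$ and the signs of its edges are intrinsic; the identification of $H,H',\overline{H},\widetilde{H}$ in Figure~\ref{fig:cyclastmin} is pinned down by the position of the (one or two) $\rhd$-edges, and swapping the roles of the two $\rhd$-edges, when there are two, leaves both $l$ and the relation $\overline{H}\pol H'$ unchanged. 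So no reparsing of the same closed path can change the parity pair $(\text{sign of }[H',\overline{H}],\,l\bmod 2)$. If the intended mechanism is instead to replace an edge at the multiplicity-$3$/$4$ vertex by a different admissible edge and thereby obtain a \emph{different} closed path of the excluded parity, that would require an explicit construction and verification against the edge list $E_1,E_3,E_4,E_5,E_7,E_8$; as written this is only a sketch, and it is the crux of the argument rather than ``bookkeeping''. The paper, for its part, simply does not address this point.
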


Example \ref{example6} shows that our algorithm is non trivial, that is it shows a-monodromicity of arrangements for which other known results and algorithms cannot provide answers.

\section{Examples and Applications}
\label{ex}

In this section we will illustrate a couple of interesting examples to show how our algorithm can be applied to study monodromy of line arrangements. In particular we will also study the case of simplicial arrangements.

\begin{example}\label{example4}
Figure \ref{fig:example3} (respectively \ref{fig:example4}) corresponds to same sharp arrangement $\A$ with the choice of polar coordinate system $(V_0,V_1)$ as in Remark \ref{rem:diffpol} 1.i. (respectively Remark \ref{rem:diffpol} 1.ii.). This arrangement satisfies:
\begin{enumerate}
\item $m(P_0)=4$ divides $|\overline{\A}|=12;$ 
\item any line of $\overline{\A}$ contains at least two intersection points in $\PP^2_\R$ of multiplicity $4;$
\item any band of parallel lines in $\A$ is $4-$resonant (the band includes two unbounded chambers which are separated by $8$ hyperplanes), see \cite{Yoshi2,Yoshi3} for the definitions of band and $k-$resonance introduced by Yoshinaga;
\item the graph of double points $\Gamma(\A)$ in not connected, see $H_2^{P_3}$ in Figure \ref{fig:example3}.
\end{enumerate}
Since $m(P_0)$ and $m(P_3)$ are coprime and all the other points in $\mathcal{P}$ have multiplicity $2$ or $3,$ $$\A'=\{H_3^{P_0},H_2^{P_0},H_2^{P_1},H_3^{P_1}\} = \{\widetilde{H}_3^{P_0},\widetilde{H}_2^{P_0},\widetilde{H}_2^{P_5},\widetilde{H}_3^{P_5}\}.$$
In figure \ref{fig:example3}, $\last(H_2^{P_1})=\last(H_3^{P_0})$ and we consider $\mathcal{G}_{\A_0',\last,\mini},$ where $\A_0'=\A',$ which contains the following cycle:

\begin{center}

\begin{tikzpicture}

\tikzset{vertex/.style = {shape=rectangle,minimum size=0.1em}}
\tikzset{edge/.style = {->,> = latex'}}
\node[vertex] (a) at  (0,0) {$H_3^{P_0}$};
\node[vertex] (b) at  (3,0) {$H_3^{P_1}$};
\node[vertex] (c) at  (1.5,-2) {$H_2^{P_0}$};

\draw[edge] (b.west)  to[bend right] (a.east);
\draw[edge] (a.south)  to[bend right] (c.west);
\draw[edge] (c.east)  to[bend right] (b.south);

\node[vertex] (d) at  (1.5,0.5) {$\pol$};
\node[vertex] (e) at  (-0.3,-1) {$\Delta$};
\node[vertex] (f) at  (3.3,-1) {$\nabla$};

\end{tikzpicture}
\end{center}

and no conclusion is possible. On the other hand, if we consider the polar coordinate system in Figure \ref{fig:example4}, the last points of the lines in $\A'$ are all different and the a-monodromicity of $\A$ follows directly from Theorem \ref{thm1.1}, since the only non trivial monodromy that can appear has order 3 coprime with $m(P_0).$

\end{example}

\begin{figure}[htbp]
\centering
\begin{tikzpicture}[scale=1]

\draw (0,0) node [right] {$H_3^{P_0}$} --(0,8);
\draw (1,0) node [right] {$H_2^{P_0}$} --(1,8);
\draw (2,0) node [right] {$H_1^{P_0}$} --(2,8);

\draw (2.5,2) node [right] {$H_4^{P_1}$} --(-3,2);
\draw (2.5,3) node [right] {$H_2^{P_2}$} --(-3,3);
\draw (2.5,4) node [right] {$H_2^{P_3}$} --(-3,4);

\draw (2.5,4.5) node [right] {$H_3^{P_3}$} --(-2,0);

\draw (2.5,5.5) node [right] {$H_2^{P_4}$} --(-2.5,0.5);

\draw (2.5,6.5) node [right] {$H_2^{P_5}$} --(-3,1);

\draw (2.5,1.5) node [right] {$H_3^{P_1}$} --(-3,7);
\draw (2.5,1) node [right] {$H_2^{P_1}$} --(-1,8);

\draw [-latex, red, dashed, rounded corners=80pt] (-0.5,0.5) node [left] {$V_0$} --(3.5,1)--(4,8) node [right] {$V_1$};

\end{tikzpicture}
\caption{$(V_0,V_1)$ as in Remark \ref{rem:diffpol} 1.i.}
\label{fig:example3}
\end{figure}

\begin{figure}[htbp]
\centering
\begin{tikzpicture}[scale=1]

\draw (0,0) node [right] {$\widetilde{H}_3^{P_0}$} --(0,8);
\draw (1,0) node [right] {$\widetilde{H}_2^{P_0}$} --(1,8);
\draw (2,0) node [right] {$\widetilde{H}_1^{P_0}$} --(2,8);

\draw (2.5,2) node [right] {$\widetilde{H}_2^{P_5}$} --(-3,2);
\draw (2.5,3) node [right] {$\widetilde{H}_2^{P_4}$} --(-3,3);
\draw (2.5,4) node [right] {$\widetilde{H}_3^{P_3}$} --(-3,4);

\draw (2.5,4.5) node [right] {$\widetilde{H}_2^{P_3}$} --(-2,0);

\draw (2.5,5.5) node [right] {$\widetilde{H}_2^{P_2}$} --(-2.5,0.5);

\draw (2.5,6.5) node [right] {$\widetilde{H}_2^{P_1}$} --(-3,1);

\draw (2.5,1.5) node [right] {$\widetilde{H}_3^{P_5}$} --(-3,7);
\draw (2.5,1) node [right] {$\widetilde{H}_4^{P_5}$} --(-1,8);

\draw [-latex, blue, dashed, rounded corners=80pt] (-0.4,7.4) node [above] {$V_0$} --(3,7)--(4,0) node [right] {$V_1$};

\end{tikzpicture}
\caption{$(V_0,V_1)$ as in Remark \ref{rem:diffpol} 1.ii.}
\label{fig:example4}
\end{figure}

\begin{remark}
It is also possible to prove a-monodromicity for the arrangement in Example \ref{example4} by using \cite[Theorem 3.23, Corollary 3.24]{Yoshi2}.
\end{remark}

\begin{example}[Simplicial arrangements]\label{ex:18lines}
An arrangement $\A$ in $\R^2$ is called \textit{simplicial} if each chamber of $\overline{\A}$ in $\PP_{\R}^2$ is a triangle. Gr\"{u}nbaum in \cite{Gru} presents a catalogue of known simplicial arrangements up to 37 lines (see \cite{Cuntz} for additional informations). In \cite{Yoshi2} Yoshinaga uses his algorithm to study the monodromy of the simplicial arrangement $\A(6m,1)$ obtained taking $3m$ lines determined by the sides of the $3m$-gon together with the $3m$ lines of symmetry of that $3m$-gon. He proved that it is $3$-monodromic (or \textit{pure-tone} using Yoshinaga definition). Yoshinaga also conjectured that those are the only simplicial arrangements with non trivial monodromy and that, more in general, if monodromy appears in a simplicial arrangements, it can only be equal to $3$. 
It is part of a work in progress to prove that if a simplicial arrangement contains three hyperplanes $H,H', H''$ such that $(H,H')$ and $(H',H'')$ are sharp pairs then the only non trivial monodromy that can appear is 3. In the following we give an example of how our algorithm reduces difficulty on computation to study a-monodromicity in simplicial arrangements. 

\begin{figure}[htbp]
\centering
\begin{tikzpicture}[scale=1]

\draw [rounded corners=20pt] (-5,5)--(5.5,-5.5)--(7.5,-6.5);
\draw [rounded corners=20pt] (-4,5)--(6,-5)--(7,-7);
\draw (6.55,-6.05) node {$\bullet$};
\draw (6.55,-6.05) node [above right]{$P_0$};
\draw (5.5,-4) node {$H_\infty$};
\draw (4,-4.5) node {$H_1^{P_0}$};

\draw [rounded corners=20pt] (3,5)--(-6,-4)--(-5.5,-7);
\draw [rounded corners=20pt] (4,5)--(-5.5,-4.5)--(-6,-7);
\draw (5,5)--(-6.5,-6.5);
\draw [rounded corners=20pt] (6,5)--(-4.5,-5.5)--(-7,-6);
\draw [rounded corners=20pt] (7,5)--(-4,-6)--(-7,-5.5);
\draw (0,-6.35) node {$\bullet$};

\draw [rounded corners=20pt] (-2,5)--(-2,-5)--(1,-7);
\draw [rounded corners=20pt] (-1,5)--(-1,-5)--(.5,-7);
\draw (0,-7)--(0,5);
\draw [rounded corners=20pt] (2,5)--(2,-5)--(-1,-7);
\draw [rounded corners=20pt] (1,5)--(1,-5)--(-.5,-7);
\draw (-5.72,-5.72) node {$\bullet$};

\draw [rounded corners=20pt] (6,-2)--(-6,-2)--(-8,1);
\draw (6,-2) node [right] {$H_2^{P_1}$};
\draw [rounded corners=20pt] (6,-1)--(-6,-1)--(-8,.5);
\draw (6,0)--(-8,0);
\draw [rounded corners=20pt] (6,1)--(-6,1)--(-8,-0.5);
\draw [rounded corners=20pt] (6,2)--(-6,2)--(-8,-1);
\draw (6,2) node [right] {$H_2^{P_7}$};
\draw (-7.3,0) node {$\bullet$};
\draw (-7.3,0) node [above] {\footnotesize{$\last(H_2^{P_0})$}};

\draw [dashed, rounded corners=40pt] (8,-6)--(-3.5,-6.5)--(-5.72,-5.72);

\draw [dashed, rounded corners=40pt] (-5.72,-5.72)--(-7,-4)--(-7.6,4);

\draw (2,-2) node {$\bullet$};
\draw (2,-2) node [below left]{$P_1$};

\draw (1,-1) node {$\bullet$};
\draw (1,-1) node [below left]{$P_2$};

\draw (.5,-.5) node {$\bullet$};
\draw (.5,-.5) node [below left]{$P_3$};

\draw (0,0) node {$\bullet$};
\draw (0,0) node [below left]{$P_4$};

\draw (-0.5,0.5) node {$\bullet$};
\draw (-.5,.5) node [below left]{$P_5$};

\draw (-1,1) node {$\bullet$};
\draw (-1,1) node [below left]{$P_6$};

\draw (-2,2) node {$\bullet$};
\draw (-2,2) node [below left]{$P_7$};

\draw (4,-5.9) node {$H_2^{P_0}$};

\draw (0,2) node {$\bullet$};
\draw (0,2) node [above left] {\footnotesize{$\last(H_2^{P_7})$}};

\draw (1,-2) node {$\bullet$};
\draw (1,-2) node [above left] {\footnotesize{$\mini(H_2^{P_1})$}};

\draw (-0.3,-6.35) node [left] {$\widetilde{P}$};

\draw [red] (5.55,-7.2) node {$\bullet$};
\draw [-latex, red, dashed, rounded corners=80pt] (5.5,-7) node [left] {$V_0$} --(5,-4.5)--(-3,3.5) node [above left] {$V_1$};

\end{tikzpicture}
\caption{Simplicial arrangement $\overline{\A}$ in $\PP_{\R}^2$ with 18 lines}
\label{fig:18lines}
\end{figure}

In the simplicial arrangement depicted in Figure \ref{fig:18lines} (known to be a-monodromic), $(H_1^{P_0},H_\infty)$ is a sharp pair of lines, since there is no intersection points contained between them. With this choice of sharp pair we get that multiplicities $m(P_0)=3$ and $m(P_2)=m(P_4)=m(P_6)=4$ are coprime and $P_3$ and $P_5$ are double points. Hence the set $\A_0=\A'_0$ defined in Notation \ref{not:arrnozero} is $$\A_0=\A'_0 = \{H_2^{P_0},H_2^{P_1},H_2^{P_7}\}$$ 
and the study of the boundary matrix simply reduces to the study of a three rows matrix containg the three columns triangular submatrix ( see Notation \ref{not:goodcolumn} ) 
$$\bordermatrix{
&c_{H_2^{P_0}}^{\last(H_2^{P_0})}&c_{H_2^{P_1}}^{\mini(H_2^{P_1})}&c_{H_2^{P_7}}^{\last(H_2^{P_7})}\cr
& & & \cr
H_2^{P_0}&t^{\alpha_0}(1-t)&0 & 0\cr
H_2^{P_1}& * & t^{\alpha_1}(1-t)& 0\cr
H_2^{P_7}& * & 0 & t^{\alpha_7}(1-t)\cr}$$
that is $\A$ is a-monodromic. 
\end{example}

\end{document}